\numberwithin{equation}{section}
\newtheoremstyle{my}{1.5em}{0.5em}{\em}{}{\sc}{.}{0.5em}{}
\newtheoremstyle{your}{1.5em}{0.5em}{}{}{\sc}{.}{0.5em}{}
\theoremstyle{my}
\theoremstyle{my}
\newtheorem{thm}{Theorem}[section]
\newtheorem{Theorem}[thm]{Theorem}
\newtheorem*{Theorem*}{Theorem}
\newtheorem{Corollary}[thm]{Corollary}
\newtheorem*{corollary*}{Corollary}
\newtheorem{Lemma}[thm]{Lemma}
\newtheorem{Proposition}[thm]{Proposition}
\newtheorem*{conjecture*}{Conjecture}
\newtheorem*{question*}{Question}
\newtheorem*{definitions*}{Definitions}
\newtheorem*{rem*}{Remark}
\newtheorem*{remark*}{Remark}
\newtheorem*{remarks*}{Remarks}
\newtheorem*{example*}{Example}
\newtheorem*{examples*}{Examples}
\newtheorem*{convention*}{Convention}
\newtheorem*{conventions*}{Conventions}
\newtheorem*{Note*}{Note}
\newtheorem*{exercise*}{Exercise}
\newtheorem*{bibliographical-note*}{Bibliographical note}
\theoremstyle{your}
\newtheorem{Remark}[thm]{Remark}
\newcommand{\Acknowledgements}{{\em Acknowledgements.} }
\newcommand{\R}{\mathbb{R}}
\newcommand{\Z}{\mathbb{Z}}
\newcommand{\C}{\mathbb{C}}
\newcommand{\ev}{\operatorname{ev}}
\newcommand{\pr}{\operatorname{pr}}
\newcommand{\CZ}{\operatorname{CZ}}
\newcommand{\pa}{\partial}
\newcommand{\Ordo}{\mathcal{O}}
\newcommand{\bK}{\mathbb{K}}
\newcommand{\bR}{\mathbb{R}}
\newcommand{\bZ}{\mathbb{Z}}
\newcommand{\bC}{\mathbb{C}}
\renewcommand{\ker}{\mathrm{ker}}
\newcommand{\Hom}{\mathrm{Hom}}
\newcommand{\scrW}{\EuScript{W}}
\newcommand{\scrG}{\mathcal{G}}
\newcommand{\MM}{\mathcal{M}}
\newcommand{\FF}{\mathcal{F}}
\newcommand{\BB}{\mathcal{B}}
\newcommand{\NN}{\mathcal{N}}
\newcommand{\TT}{\mathcal{T}}
\newcommand{\DD}{\mathcal{D}}
\newcommand{\st}{\mathrm{st}}
\newcommand{\lk}{\operatorname{Hopf}}
\title{Nearby Lagrangian fibers and Whitney sphere links}
\author{Tobias Ekholm}
\address{Department of mathematics Uppsala University, Box 480, 751 06 Uppsala, Sweden \newline
Institut Mittag-Leffler, Aurav. 17, 182 60 Djursholm, Sweden}
\email{tobias.ekholm\@@math.uu.se}
\author{Ivan Smith}
\address{Centre for Mathematical Sciences, University of Cambridge, \newline Wilberforce Road, CB3 0WB, United Kingdom}
\email{is200\@@cam.ac.uk} 
\date{v1: September, 2016. v4: September, 2017.} 
\thanks{T.E. is partially supported by the Knut and Alice Wallenberg Foundation as a Wallenberg Scholar and by the Swedish Research Council.} 
\thanks{I.S. is partially supported by a Fellowship from the EPSRC}
\begin{document}
\thispagestyle{empty}

\begin{abstract} 
Let $n>3$, and let $L$ be a Lagrangian embedding of $\R^{n}$ into the cotangent bundle $T^{\ast}\R^{n}$ of $\R^n$ that agrees with the cotangent fiber $T_x^*\R^{n}$ over a point $x\neq 0$ outside a compact set. Assume that $L$ is disjoint from the cotangent fiber at the origin. The projection of $L$ to the base extends to a map of the $n$-sphere $S^n$ into $\R^{n}\setminus\{0\}$. We show that this map is homotopically trivial, answering a question of Y.\ Eliashberg. We give a number of generalizations of this result, including homotopical constraints on embedded Lagrangian disks in the complement of another Lagrangian submanifold, and on  two-component links of immersed Lagrangian spheres with one double point in $T^{\ast}\R^{n}$, under suitable dimension and Maslov index hypotheses. 
The proofs combine techniques from \cite{EkholmSmith, EkholmSmith2} with symplectic field theory.
\end{abstract}

\maketitle

\section{Introduction}\label{Sec:intro}
Recent work of Abouzaid and Kragh \cite{AbKr} has established the following striking rigidity result: if $Q$ is a closed manifold and $K\subset T^{\ast}Q$ is a closed exact Lagrangian submanifold, then the projection from $K$ to $Q$ is a simple homotopy equivalence. Here we consider related homotopy rigidity questions for Lagrangian disks in $T^*Q$ with prescribed behavior at infinity, and for Lagrangian immersions of spheres in Euclidean space with a single double point of high Maslov grading. Although our results are broadly inspired by the nearby Lagrangian submanifold conjecture, the methods of proof are very different: indeed, the homotopy equivalence of \cite{Abouzaid:htpy, AbKr}  is obtained from Whitehead's theorem, but here we focus on  situations where there is no underlying homological equivalence.

Our first result answers a question of Y.\ Eliashberg \cite{Yasha}. Let $x\ne 0$ be a point in $\R^{n}$ and let $L\subset T^{\ast}\R^{n}$ be a Lagrangian disk which agrees with the fiber $T^{\ast}_{x}\R^{n}$ outside a compact set. (By appropriate versions of the ``nearby Lagrangian submanifold conjecture", as established in \cite[Corollary 1.2]{Abouzaid:htpy} or \cite[Theorem 56]{EL},  any exact Lagrangian which agrees with the fiber outside a compact set must be a disk. See \cite[Corollary 3.12]{EkholmKraghSmith} for results on its parameterization, and Section \ref{Subsec:quasi-iso} for a related Floer-theoretic discussion.) Assume that $L$ is disjoint from the fiber $T_{0}^{\ast}\R^{n}$ at the origin. Then composing $L$ with the projection to the base we get a map $f_{L}\colon S^{n}\to\R^{n}\setminus \{0\}\simeq S^{n-1}$. For $n>3$ the homotopy group $\pi_n(S^{n-1})$ is isomorphic to $\Z/2\Z$.

\begin{Theorem}\label{Thm:yasha}
For $n>3$, the map $f_L$ represents the trivial element in $\pi_n(S^{n-1})$.
\end{Theorem}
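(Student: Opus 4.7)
The proof strategy combines Lagrangian surgery at infinity with a symplectic field theory (SFT) holomorphic curve count in the spirit of \cite{EkholmSmith, EkholmSmith2}.

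\emph{Step 1: Surgery at infinity.} First I would convert the pair $(L, T^{\ast}_{0}\R^{n})$ into a single closed immersed exact Lagrangian $n$-sphere $\Sigma \subset T^{\ast}\R^{n}$. Outside a compact set, $L$ coincides with the parallel fiber $T^{\ast}_{x}\R^{n}$, so $L$ and $T^{\ast}_{0}\R^{n}$ look at infinity like two parallel affine Lagrangian planes. I would cap them off by attaching, outside a large ball, a standard Lagrangian connect-sum region (for instance, modelled on the cotangent lift of a path from $x$ to $0$ combined with a Whitney handle), obtaining a single (immersed) Lagrangian $S^{n}$ with controlled double-point data. The key point is that the Maslov-graded self-intersection data of $\Sigma$ should be identifiable with $[f_{L}] \in \pi_{n}(S^{n-1}) = \Z/2$: roughly, $[f_{L}]$ obstructs a Lagrangian isotopy of $L$ to $T^{\ast}_{x}\R^{n}$ through Lagrangians disjoint from $T^{\ast}_{0}\R^{n}$, and this obstruction translates into the minimal number, mod $2$, of Whitney-type double points in any such $\Sigma$.

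\emph{Step 2: SFT count of holomorphic disks.} Next I would stretch the neck around a small contact-type sphere enclosing each putative double point and invoke the SFT machinery of \cite{EkholmSmith, EkholmSmith2}. The relevant moduli space consists of finite-energy $J$-holomorphic disks on $\Sigma$ with one positive puncture asymptotic to the short Reeb chord at a double point. A direct Conley--Zehnder and Fredholm index calculation shows that, for $n>3$ and for a double point of the Whitney type produced in Step 1, the expected dimension is zero, yielding a well-defined $\Z/2$-count.

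\emph{Step 3: Vanishing of the count.} Finally I would show this count vanishes. Following the cobordism arguments of \cite{EkholmSmith2}, the mod-$2$ count is invariant under Lagrangian isotopies that preserve the behavior at infinity; and in a standard local model (for instance, by shrinking $L$ to a small perturbation of $T^{\ast}_{x}\R^{n}$, which is disjoint from $T^{\ast}_{0}\R^{n}$ for trivial reasons) the count is manifestly zero. Together with Step 1's identification of this count with $[f_{L}]$, this yields $[f_{L}] = 0$, proving the theorem.

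\emph{Main obstacle.} The most delicate point is Step 1: matching the homotopical invariant $[f_{L}] \in \pi_{n}(S^{n-1})$ precisely with the Maslov-graded self-intersection data of $\Sigma$. This requires tracing how the projection map $\pi\circ L$ enters into both the construction of the surgery handle and the evaluation map of the SFT moduli space, and using the fact that, for $n>3$, the suspension isomorphism collapses $\pi_{n}(S^{n-1})$ to $\Z/2$ generated by the unstable Hopf class, which is exactly what the Whitney double point detects. Transversality and compactness for the neck-stretched moduli spaces, while technical, should be handled by the methods of \cite{EkholmSmith, EkholmSmith2}.
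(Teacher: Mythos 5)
Your proposal diverges from the paper's argument in a way that leaves a real gap, which you yourself flag as the ``main obstacle.'' Two issues stand out.

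First, your Step 1 asks to absorb $T^{\ast}_{0}\R^{n}$ into a single closed immersed sphere $\Sigma$, and then to read $[f_{L}]$ off the self-intersection data of $\Sigma$. This is not how the paper proceeds, and it is the step that carries all the difficulty. If you connect-sum $L$ with the disjoint fiber $T^{\ast}_{0}\R^{n}$ near infinity, the double points of $\Sigma$ come only from the connecting handle, not from any intersection of $L$ with $T_{0}^{\ast}\R^{n}$ (there is none), so there is no obvious mechanism transporting the homotopy class $[f_{L}]$ into double-point data. The paper instead doubles \emph{only} $L$ (the Legendrian double $\Gamma(L)$ of Section 2.2), producing an immersed sphere $\phi\colon S^{n}\to\R^{2n}$ with a single double point of grading $n$, while $T^{\ast}_{0}\R^{n}$ remains in the ambient space as the object to be avoided. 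Keeping the fiber separate is essential: the whole point is to build a map into the \emph{complement} of the fiber, not to erase the fiber from the picture.

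Second, your Steps 2--3 aim at a $\Z/2$-valued count of a zero-dimensional moduli space that would directly equal $[f_{L}]$. The paper's moduli space $\FF$ is $(n+1)$-dimensional, not zero-dimensional, and no count is taken. What is extracted is the compactified moduli space itself as a spin $(n+1)$-manifold $\BB$ with $\partial\BB = S^{n}$, together with an extension of the evaluation map $\ev_{1}\colon \BB\to T^{\ast}\R^{n}\setminus T^{\ast}_{0}\R^{n}$ built from the holomorphic disk components (Lemmas \ref{Lem:extension_first_part}, \ref{Lem:extension_second_part}) and a monotonicity/rescaling argument (Lemma \ref{Lemma:scale}, Corollary \ref{Cor:discs_localise}) that keeps the disks away from the fiber. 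The conclusion then follows from the Pontryagin--Thom construction: the preimage of a ray under $\pr\circ\phi$ is a framed circle in $S^{n}=\partial\BB$, it bounds a framed surface in $\BB$, and the spin structure on $\BB$ ensures the framing is the null-cobordant one. Your proposal, as stated, has no analogue of this Pontryagin--Thom step, and it is this step (not a numerical invariant) that actually identifies the element of $\pi_{n}(S^{n-1})$. The ``main obstacle'' you describe is not a technical loose end; it is the missing core of the argument, and the route you sketch does not supply it.
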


If $0\in \R^n$ lies in the unbounded component of the complement of the projection of $L$ to $\R^n$, then $f_L$ is clearly nullhomotopic, but if $0$ lies in one of the bounded components of the complement, this result seems to have no elementary proof.  We prove Theorem \ref{Thm:yasha} by extending results of \cite{EkholmSmith,EkholmSmith2}. We compactify $L$ to a Lagrangian sphere  $\hat L$ which immerses in $T^*\R^{n}$ with one double point of Maslov grading $n$. A displacing Hamiltonian for $\hat L$ yields a 1-parameter family of deformations of the Cauchy-Riemann equations on the disk with boundary condition on $\hat L$. As in \cite{EkholmSmith2} we construct from the space of solutions a spin $(n+1)$-manifold $\mathcal{B}$ with $\partial{\mathcal{B}}=\hat L$. Using the space of holomorphic disks with boundary on $\hat L$, with one puncture at its double point, and a monotonicity argument, we extend the evaluation map on $\hat L=\partial{\mathcal{B}}$ to a map from $\mathcal{B}$ into the complement of $T^{\ast}_{0}\R^{n}$. In combination with the Pontryagin-Thom construction this gives the result.

The proof applies more generally. 
Let $S^{k}\subset\R^{k+1}\subset\R^{n}$ be the $k$-dimensional unit sphere for $0<k<n-1$ and let $\pr\colon T^{\ast} \R^{n}\to\R^{n}$ denote the bundle projection. Assume that $x\notin S^{k}$ and that $L$ is disjoint from $\pr^{-1}(S^k)$, the union of all cotangent fibers over points in $S^k$. Composing  the projection to $\R^{n}$ with the inclusion $\R^{n} \subset S^n$, where we now view $S^{n}$ as the one point compactification of the base, gives a map $f_L\colon S^{n}\to S^{n}\setminus S^k\simeq S^{n-k-1}$. Then the $m$-fold suspension gives a map 
\[  
\Sigma^{m}f_{L}\colon S^{n+m}\to S^{n-k-1+m}
\]
representing an element  $[\Sigma^{m}f_L]\in\pi_{n+m}(S^{n-k-1+m})$, which is independent of $m$ in the stable range $k<\frac12(n+m-3)$, the \emph{stable homotopy class} of $f_L$.

\begin{Corollary}\label{Cor:yasha}
The stable homotopy class of $f_{L}$ is trivial.
\end{Corollary}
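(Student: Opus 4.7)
The plan is to adapt the proof of Theorem \ref{Thm:yasha} essentially verbatim, replacing the single fiber $T_0^{\ast}\R^n$ by the closed family $\pr^{-1}(S^k)$. Repeating the construction of \cite{EkholmSmith2}: first compactify $L$ to an immersed Lagrangian sphere $\hat L\subset T^{\ast}\R^n$ with one double point of Maslov grading $n$; choose a displacing Hamiltonian $H$ for $\hat L$; build the spin $(n+1)$-manifold $\BB$ with $\partial\BB = \hat L$ from a parameterized moduli space of deformed $\cdbar$-solutions with boundary on $\hat L$; and extend the tautological inclusion $\hat L = \partial \BB \hookrightarrow T^{\ast}\R^n$ over $\BB$ via the evaluation map on the moduli space of once-punctured holomorphic disks with boundary on $\hat L$ asymptotic at the puncture to the double point. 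This yields a smooth map $F \colon \BB \to T^{\ast}\R^n$.

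The essential new point is to arrange that the image of $F$ misses $\pr^{-1}(S^k)$. By hypothesis $\hat L$ is disjoint from the closed set $\pr^{-1}(S^k)$, so there is an open neighborhood $U$ of $\hat L$ in $T^{\ast}\R^n$ with $U\cap \pr^{-1}(S^k) = \emptyset$. The standard monotonicity estimate used in the proof of Theorem \ref{Thm:yasha} bounds the diameter of any holomorphic curve contributing to $F$ in terms of its symplectic area, and the latter is controlled by the displacement energy of $H$. Choosing $H$ with sufficiently small displacement energy traps every relevant curve inside $U$, and hence
\[
\pr\circ F \colon \BB \longrightarrow \R^n\setminus S^k \;\hookrightarrow\; S^n\setminus S^k \;\simeq\; S^{n-k-1}
\]
extends $f_L$ over the null-bordism $\BB$ of $S^n$.

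To finish I would invoke Pontryagin-Thom: the preimage of a regular value of the extension is a framed $(k+2)$-submanifold of $\BB$ whose boundary inside $S^n = \partial\BB$ is the framed $(k+1)$-manifold representing $f_L$. Hence the framed bordism class of $f_L$ vanishes, which is equivalent to the stable triviality $[\Sigma^m f_L] = 0$ in $\pi_{n+m}(S^{n-k-1+m})$ for $m$ in the stable range, as claimed in the corollary.

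The principal obstacle is the monotonicity step: one must verify that the symplectic areas of the two moduli spaces appearing in the construction (the one defining $\BB$ and the one supplying the evaluation map) can be bounded uniformly by the displacement energy of $H$ and therefore made simultaneously arbitrarily small, so that every contributing disk lies inside the prescribed neighborhood $U$ of $\hat L$. This is precisely the area estimate already exploited in the proof of Theorem \ref{Thm:yasha}; the only new feature here is that the forbidden set $\pr^{-1}(S^k)$ is higher-dimensional, but since it is still closed and disjoint from $\hat L$, no fundamentally new analytic input is required.
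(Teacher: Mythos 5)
Your outline captures the spirit of the argument, but there are two substantive gaps where the actual proof diverges from what you describe, and one technical misstatement.

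\textbf{The framing argument needs parallelizability, not merely a spin structure, and this forces the geometric suspension.} You propose to use the spin filling $\BB$ from Proposition \ref{Prop:spin}, take the preimage $W$ of a regular value, and ``invoke Pontryagin--Thom'' to conclude $M = W\cap S^n$ is a framed boundary. For $k=0$ (Theorem \ref{Thm:yasha}) the relevant stable group is $\pi^s_1 \cong \Z/2$, and a spin structure is exactly what is needed to see that the induced framing on a circle is nullbordant; this is what the computation $\langle\xi_{\BB},\vec{C_\ell}\rangle=0$ accomplishes. For $k\geq 1$ the framed bordism group $\Omega^{fr}_{k+1}$ is more complicated, and a spin structure on $\BB$ gives no control over the tangential framing on the higher skeleta containing $W$. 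The paper therefore uses Proposition \ref{Prop:parallel}, which yields a genuinely parallelizable $\BB$ --- but this requires $\phi$ to be tangentially standard, which is not automatic for the immersion compactifying $L$. To get around this, the paper first applies the doubling construction $C^m(L)$ and Lemma \ref{Lem:suspension}, choosing $m$ so that $\pi_{n+m}(U/O)=0$, which forces tangential standardness in dimension $n+m$ and simultaneously replaces $f_L$ by $\Sigma^m f_L$. Your proposal omits the suspension step entirely, so even granting parallelizability the argument would not apply to general $L$.

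\textbf{Having a framed $W \subset \BB$ with $\partial W = M \subset \partial\BB$ does not by itself prove $M$ is framed nullbordant.} The manifold $\BB$ is not a ball, and the Pontryagin--Thom correspondence requires the cobounding manifold to sit inside $S^n\times[0,1]$ (or equivalently, inside a collar of $\partial\BB$) with compatible framing. The paper handles this by first doing framed surgery on circles to make a new parallelizable filling $\BB'$ that is $(k+2)$-connected (using that $k < \frac{1}{2}(n+m-3)$), and then using a Morse function on $\BB'$ with no low-index critical points to push the $(k+2)$-dimensional $W_\gamma$ into a collar neighborhood of the boundary. Only then does Pontryagin--Thom apply. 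This step is not cosmetic: it is exactly what makes a framed submanifold of the interior of $\BB'$ represent the trivial framed bordism class, and your proposal skips it.

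\textbf{Minor: the monotonicity step is misdescribed.} You say the areas are made small by ``choosing $H$ with sufficiently small displacement energy.'' The displacement energy of the fixed Lagrangian $\hat L$ is a positive lower bound, not something one can shrink. The paper instead scales the fiber direction of $T^*\R^n$ (Lemma \ref{Lemma:scale}), which shrinks the area of the generating class $\beta$; and this scaling argument is applied only to the once-punctured \emph{unperturbed} holomorphic disks in $\MM$, which are the curves needed to extend the evaluation map over the filling $\TT = \DD\times\MM$ via Lemmas \ref{Lem:extension_first_part} and \ref{Lem:extension_second_part}. The Floer-perturbed curves in $\FF$ (which build the bulk of $\BB$) need no area control, because their boundary evaluation $\ev_1$ automatically lands in $\hat L$.
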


For instance, cf. Remark \ref{Rmk:3dim},  a Lagrangian disk  $L \subset T^*(\R^3\setminus\{0\})$ which co-incides with a fiber near infinity projects to the zero-section with \emph{even} Hopf invariant; the question of whether the Hopf invariant vanishes integrally remains open. 

In the proofs of Theorem \ref{Thm:yasha} and Corollary \ref{Cor:yasha}, a key requirement is to disjoin the images of once-punctured holomorphic disks with boundary on $\hat L$ from the co-isotropic subset $\pr^{-1}(S^k)$. We ensure that disjointness via a monotonicity argument. When the subset to be avoided is a Lagrangian submanifold $C$, one can alternatively study the behavior of holomorphic curves in a neighborhood of $C$ by neck-stretching around $C$, as in symplectic field theory (SFT). Under additional assumptions on the geometry of $C$, index arguments ensure that the relevant holomorphic curves are disjoint from $C$ for sufficiently stretched almost complex structure. We next discuss two  results in this setting.

Let $\R^{2n}_{\st}$ denote\footnote{We will  write $\R^{2n}$ in place of $\R^{2n}_{\st}$ when context allows.} standard symplectic $2n$-space, namely $\R^{2n}$ equipped with the form $\omega_0=\sum_{j}dx_j\wedge dy_j$. Let $\phi\colon S^{n}\to\R^{2n}_{\st}$ be an immersed Lagrangian sphere with one double point of Maslov grading $n$. Let $C\subset \R^{2n}_{\st}$ be a Lagrangian submanifold with ideal Legendrian boundary $\Gamma$. We allow $C$ to be closed, corresponding to  $\Gamma=\varnothing$.

\begin{Theorem}\label{Thm:embLag}
Let $n\geq 4$. Suppose $C \subset \R^{2n}_{\st}$ is monotone with minimal Maslov number $\ge 3$ and admits a Riemannian metric for which the Morse index of any non-constant contractible geodesic loop is $\ge 3$. If $\phi(S^n)\subset \R^{2n}\setminus C$, then $\phi$ is null-homotopic in $\R^{2n}\setminus C$.
\end{Theorem}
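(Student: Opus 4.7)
My plan is to generalize the proof of Theorem \ref{Thm:yasha}, replacing its monotonicity step by a neck-stretching argument from symplectic field theory. Following the setup of \cite{EkholmSmith2}, I fix a Hamiltonian displacing $\phi(S^n)$ and use it to deform the Cauchy-Riemann equation in a one-parameter family; the compactified parametric moduli space of holomorphic disks in $\R^{2n}$ with boundary on $\phi(S^n)$ and one interior puncture asymptotic to the double point is then a compact spin $(n+1)$-manifold $\mathcal{B}$ with $\partial\mathcal{B}\cong S^n$, and evaluation at a boundary marked point yields a map $F\colon\mathcal{B}\to\R^{2n}$ with $F|_{\partial\mathcal{B}}=\phi$. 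Granted that $F$ can be arranged to have image in $\R^{2n}\setminus C$, the Pontryagin--Thom argument used in the proof of Theorem \ref{Thm:yasha} converts this extension into a null-homotopy of $\phi$ in $\R^{2n}\setminus C$.

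To arrange $F(\mathcal{B})\subset\R^{2n}\setminus C$, I choose a compatible almost-complex structure $J_\tau$ stretched along the ideal contact boundary $\Sigma_C$ of a Weinstein tubular neighborhood of $C$, with neck length $\tau$. If the holomorphic disks defining $\mathcal{B}$ were to meet $C$ for all $\tau$, then by SFT compactness they would degenerate along a subsequence to a holomorphic building containing a nonconstant bottom component $v\colon(\dot\Sigma,\partial\dot\Sigma)\to(T^*C,C)$ with negative punctures asymptotic to Reeb orbits of $\Sigma_C$. The Reeb flow on $\Sigma_C$ is the geodesic flow on $C$, and (after a Morse--Bott perturbation) its nonconstant orbits correspond to nonconstant closed geodesics, with Conley--Zehnder index bounded below in terms of the Morse index of the geodesic. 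The Morse index $\ge 3$ hypothesis then supplies $\mu_{\CZ}\ge 3$ (up to the standard shift) at each negative asymptotic; combined with the minimal Maslov number $\ge 3$ bound controlling the Maslov contribution of $\partial v$, a Fredholm index calculation forces $\ind(v)<0$, so no such $v$ exists for generic $J_\tau$, excluding the breaking for $\tau$ large.

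The main obstacle is the index calculation: the Maslov contribution from $\partial v$ arises in nonnegative multiples of at least $3$, while each negative Reeb asymptotic subtracts at least $3$ from the Fredholm index, and these terms must combine with the topological contributions to a strictly negative total. A separate subtle point is that the Morse--Bott family of constant geodesics (the points of $C$) generates short Reeb orbits whose contributions to broken buildings require separate treatment, typically via a symplectic area argument: a bottom component asymptotic only to such short orbits carries zero area and so cannot account for a disk entering the Weinstein neighborhood.
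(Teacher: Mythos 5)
Your overall strategy (neck-stretch near $C$ to push the holomorphic disks parametrizing $\BB$ off of $C$, then use the resulting extension of $\phi$ over $\BB$ together with Pontryagin--Thom) is the same as the paper's, but the proposal has concrete errors in the SFT step and elides a substantial topological step without which the conclusion does not follow.

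\textbf{The SFT picture is wrong.} You describe the limiting bottom component as a curve $v\colon(\dot\Sigma,\partial\dot\Sigma)\to(T^*C,C)$ with boundary on $C$ and negative punctures. But the original disk has boundary only on $\phi(S^n)$, which is disjoint from $C$; when the neck is stretched, no boundary component can end up on $C$. The pieces inside $T^*C$ are punctured \emph{spheres} with \emph{positive} punctures asymptotic to Reeb orbits of $U^*C$ (equivalently, closed geodesics). Consequently the correct index bookkeeping is for spheres in $T^*C$ with several positive punctures, and for the complementary spheres and the one disk-with-boundary-on-$\phi(S^n)$ in $\R^{2n}_C\setminus C$ with negative punctures. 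In particular your assertion that "each negative Reeb asymptotic subtracts at least $3$" is sign-confused: in the outside pieces, the Maslov class $\mu_C(\beta_j)$ enters with a \emph{positive} sign, and the monotonicity and minimal Maslov number assumptions are used exactly to bound those contributions from below after capping the outside pieces with half-cylinders over geodesics to make disks with boundary on $C$. What is actually shown (see Lemma \ref{Lemma:wrongdimension}) is not that a single component has negative index, but that the total dimension of the sphere components inside $T^*C$ is strictly less than $n-2$, so their evaluation images generically miss $C$. Relatedly, your remark about "short Reeb orbits from constant geodesics" is spurious: constant loops are not Reeb orbits of the unit cotangent bundle, and there is no Morse--Bott family of such orbits to treat.

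\textbf{The final step requires the topology of the complement.} You claim the Pontryagin--Thom argument from Theorem \ref{Thm:yasha} finishes the proof once $\ev\colon\BB\to\R^{2n}\setminus C$ is constructed. But that argument used the global projection $\pr\colon T^*\R^n\to\R^n$, which has no analogue for a general Lagrangian $C$; the paper notes this explicitly. One has to compute $\pi_n(\R^{2n}\setminus C)$ (done in the Appendices via a Thom-space model of the complement), after which the answer bifurcates by $w_2(TC)$: when $w_2\neq 0$ the Hurewicz map is an isomorphism and the existence of $\ev$ already implies the result homologically, while when $w_2=0$ one must homotope $\ev$ into the normal-fiber sphere $\nu\subset\R^{2n}\setminus C$ (using a Morse function on $\BB$ with no index $1$ or $n$ critical points) before Pontryagin--Thom applies. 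This in turn requires reducing to the case $\pi_1(\BB)=1$ via surgery on loops carrying the bounding spin framing and extending $\ev$ over the surgery $2$-handles, using simple-connectivity of the complement. None of this is present in your sketch, and without it the reduction from "$\phi$ bounds a map of $\BB$ into $\R^{2n}\setminus C$" to "$\phi$ is null-homotopic" is not justified.
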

   
In Appendices \ref{sec:algtop} and \ref{sec:withboundary} we compute $\pi_n(\R^{2n}\setminus C)$, which depends on the Stiefel-Whitney classes $w_i(TC)$ for $i=1,2$. 
Theorem \ref{Thm:embLag} is most interesting in the case that $w_2(TC)\in H^{2}(C;\Z/2\Z)$ vanishes, since then $\pi_{n}(\R^{2n}\setminus C)$ has a $\Z/2\Z$-subgroup corresponding to $\pi_n(\nu)$, where $\nu$ is the fiber $(n-1)$-sphere in the boundary of a tubular neighborhood of $C$. 

Manifolds satisfying the Morse index condition include (twisted) products of high dimensional spheres and manifolds without contractible geodesics.   (In the proof of Theorem \ref{Thm:embLag}, monotonicity arguments which prevent holomorphic discs escaping to infinity will show that the hypotheses on geodesics will only be required in a large compact subset of $C$, so there are no special requirements on the Riemannian metric near the ideal boundary.) 
In Section \ref{Sec:Linked} we give examples showing that the monotonicity and Maslov index hypotheses are necessary. 

We point out that Theorem \ref{Thm:embLag} strengthens Theorem \ref{Thm:yasha}. Noting that a disk has a metric without closed geodesics, the former result shows that we can replace the fiber $T^{\ast}_0\R^{n}$  by any Lagrangian disk $D$ which agrees with $[T,\infty)\times\Delta\subset [T,\infty)\times U^{\ast}\R^{n}\subset T^{\ast}\R^{n}$ for some $T>0$, where $U^{\ast}\R^{n}$ is the unit cotangent bundle and $[T,\infty)\times U^{\ast}\R^{n}$ denotes the complement of the radius $T$ disk cotangent bundle, such that for all sufficiently large $T'>0$ the intersection $L'=L\cap (T^{\ast}\R^{n}\setminus ([T',\infty)\times U^{\ast}\R^{n}))$ can be completed to a Lagrangian sphere with one double point of grading $n$ in the complement of $D$. This is possible for example if $\Delta$ lies in the restriction of the unit cotangent bundle $U^{\ast}\R^{n}|_{H_{x}}$ to the half space $H_{x}=\{u\in\R^{n}\colon u\cdot x\le 0\}$ of vectors with non-positive $x$-component. In this case we can extend $L'$ via a Lagrangian cobordism $L''$ with topology $[T',T'']\times S^{n-1}$, in the region $[T',T'']\times U^{\ast}\R^{n}$ over $\{u\in\R^{n}\colon u\cdot x\ge  x\cdot x\}$, which interpolates between $\partial L'=\partial_{-}L''$ and a Legendrian fiber sphere $\partial L''_{+}\subset\{T''\}\times U^{\ast}_{\lambda x}\R^{n}$ for some large $\lambda>0$. For $\lambda>0$ sufficiently large we are then far from $D$ and can cap the fiber sphere $\partial L''_{+}$ off with a standard half of a Whitney sphere in $T^{\ast}\R^{n}$ over $\{u\in\R^{n}\colon u\cdot x\ge \lambda x\cdot x\}$, see Sections \ref{sec:basic} and \ref{sec:comp}.  We point out that there is a rich class of non-standard Lagrangian disks $D$ (with $\Delta$ Legendrian knotted), see \cite{CNS} for examples when $n=2$, and \cite[Section 2.4]{E} for constructing analogous disks in higher dimensions.

Separately, we consider two component links of immersed spheres.  A \emph{Whitney sphere link} is an immersion $\iota\colon S^{n}\sqcup S^{n}\to\R^{2n}$ such that each component has exactly one double point and  the images of the components are disjoint. Such a link $\iota$ determines a Gauss map $\scrG_{\iota}\colon S^{n}\times S^{n}\to S^{2n-1}$,
\[ 
\scrG_{\iota}(x,y)=\frac{\iota(x)-\iota(y)}{|\iota(x)-\iota(y)|}\in S^{2n-1}.
\]
There are two homotopy classes of maps $S^{n}\times S^{n}\to S^{2n-1}$: 
\[
[S^n \times S^n, S^{2n-1}] \cong  \pi_{2n}(S^{2n-1}) = \bZ/2\Z,
\]
where $0\in\bZ/2\Z$ corresponds to the homotopy class of constant maps.
We define the \emph{Hopf linking number} $\lk(\iota)\in\bZ/2\Z$ as the homotopy class of $\scrG_{\iota}$.  A \emph{Lagrangian Whitney sphere link} is a Whitney sphere link which is a Lagrangian immersion.

\begin{Theorem} \label{Thm:whitney_link}
Let $n>4$ and let $\iota\colon S^n \sqcup S^n \to \bR^{2n}_{\st} $ be a Lagrangian Whitney sphere link. Assume that the Maslov grading of the double point of each component equals $n$. Then  $\lk(\iota)=0 \in \bZ/2\Z$.
\end{Theorem}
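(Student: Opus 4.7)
The plan is to reduce Theorem~\ref{Thm:whitney_link} to an application of Pontryagin--Thom, in the spirit of Theorem~\ref{Thm:yasha}. Write the two components of the link as $L_i=\iota(S^n_i)$ for $i=1,2$. Each $L_i$ is an immersed Lagrangian sphere in $\bR^{2n}_{\st}$ with a single double point of Maslov grading~$n$, so the bounding-manifold construction from \cite{EkholmSmith2}, which was used in the proof of Theorem~\ref{Thm:yasha}, applies to $L_1$: a compactly supported Hamiltonian displacing $L_1$ yields a one-parameter family of deformed Cauchy--Riemann equations on a once-punctured disk with boundary on $L_1$ and puncture asymptotic to the double point, whose compactified moduli space of solutions is a compact spin $(n+1)$-manifold $\scrB_1$ with $\partial\scrB_1=S^n$, together with an evaluation map $\mathrm{ev}_1\colon\scrB_1\to\bR^{2n}$ extending $\iota_1$ on the boundary. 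Assuming that $\mathrm{ev}_1(\scrB_1)\cap L_2=\varnothing$, the Gauss map extends as
\[
\widetilde{\scrG}_\iota\colon\scrB_1\times S^n_2\longrightarrow S^{2n-1},\qquad (b,y)\longmapsto\frac{\mathrm{ev}_1(b)-\iota_2(y)}{|\mathrm{ev}_1(b)-\iota_2(y)|}.
\]
Since $\scrB_1\times S^n_2$ is a smooth $(2n+1)$-manifold with boundary $S^n\times S^n$ and $\widetilde{\scrG}_\iota$ restricts to $\scrG_\iota$ on the boundary, the Pontryagin--Thom construction gives $\lk(\iota)=[\scrG_\iota]=0\in\pi_{2n}(S^{2n-1})=\bZ/2\bZ$.

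The bulk of the work is therefore to enforce $\mathrm{ev}_1(\scrB_1)\cap L_2=\varnothing$. The approach is SFT-style neck-stretching along the embedded part of $L_2$, combined with a local model near its double point similar to the one already used near the double point of $L_1$. After sufficient stretching, any holomorphic disk in the moduli underlying $\scrB_1$ that meets $L_2$ must break into an SFT building containing at least one nontrivial sublevel in the symplectization of $U^\ast S^n$ or in $T^\ast S^n$ with boundary on the zero section, with positive asymptotics at Reeb chords or orbits corresponding to closed geodesics on the round $S^n$. Using the Morse--Bott structure of the geodesic flow on $S^n$ (a single positive-dimensional family of closed geodesics, of Morse index $n-1$) together with the Maslov-grading-$n$ hypothesis on both double points, a formal dimension count directly parallel to the one appearing in the proof of Theorem~\ref{Thm:embLag} shows that for $n>4$ every such potential building has strictly negative virtual dimension. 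For generic almost complex structure and sufficiently stretched neck, no such building can occur, which forces the desired disjointness.

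The hardest step is the SFT compactness analysis of Step~2: unlike in Theorem~\ref{Thm:embLag}, where the avoided Lagrangian is embedded and admits a tame Morse geometry, here $L_2$ is immersed, so the neck-stretching around $L_2$ must incorporate a local model at its Lagrangian double point alongside the Morse--Bott model on $U^\ast S^n$, and this must be combined with the continuation parameter already used to define $\scrB_1$. The resulting two-parameter compactness argument is where the hypotheses $n>4$ and Maslov grading $n$ at each double point are indispensable: they are precisely what is needed to ensure that every potentially obstructing broken or escaping configuration (including those with chords through the double point of $L_2$) has strictly negative virtual dimension and so can be excluded for generic data, at which point the Pontryagin--Thom argument above concludes the proof.
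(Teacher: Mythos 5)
Your proposal does not actually follow the paper's route, and the point at which it diverges is exactly where it leaves a gap. You propose to enforce disjointness of the extended evaluation map from $L_2$ by neck-stretching directly around the \emph{immersed} Lagrangian $L_2$, incorporating an additional local model at its double point. You yourself flag this as the hardest step, but you never carry it out: you assert that a two-parameter SFT compactness argument combined with a dimension count ``directly parallel'' to that of Theorem~\ref{Thm:embLag} works, without identifying the relevant broken configurations (which now include Reeb chords at the self-intersection, not just Reeb orbits) or verifying the index inequalities. This is precisely the analysis that the framework of Section~\ref{sec:neckstretch}--\ref{sec:stretch} was set up to \emph{avoid} having to do for immersed Lagrangians.

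The paper sidesteps this entirely by first performing a Lagrange surgery on the second component $S^n_b$ to obtain an \emph{embedded} monotone Lagrangian $C$ (diffeomorphic to $S^1\times S^{n-1}$ for $n$ even, or the twisted $S^{n-1}$-bundle over $S^1$ for $n$ odd), with minimal Maslov number $n$. One then applies Theorem~\ref{Thm:embLag} to $C$ with the round metric on the sphere factor, for which the minimal Morse index of a nonconstant contractible geodesic is $n-2\ge 3$. This gives a nullhomotopy of $\iota|_{S^n_a}$ in $\R^{2n}\setminus C$, and Lemma~\ref{lem:relating} translates this into vanishing of the Hopf linking number. Note that your dimension count references closed geodesics on $S^n$ of index $n-1$; after surgery the relevant geometry lives on $S^1\times S^{n-1}$ and the correct bound is $n-2$, which is why the paper needs $n>4$ rather than $n>3$. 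Finally, your proposed conclusion via extending the Gauss map over $\scrB_1\times S^n_2$ and invoking Pontryagin--Thom is morally parallel to the computation in Section~\ref{ssec:pfthmyasha}, but you should at minimum check the spin-structure/framing compatibility there as well; the paper isolates this into Lemma~\ref{lem:relating}, which requires that $\iota|_{S^n_a}$ is trivial in the $\Z$-summand of $\pi_n(\R^{2n}\setminus C)$ in the even case -- an additional point your argument does not address.
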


Theorem \ref{Thm:whitney_link} is derived from Theorem \ref{Thm:embLag} by taking $C$ to be a Lagrange surgery on one component of $\iota$. We point out that although the two components of $\iota$ appear symmetrically in the statement of the result, they play radically different roles in the proof. The restriction to $n>4$ is used to exclude certain degenerations of holomorphic curves for index reasons, cf. Section \ref{sec:stretch}. In Proposition \ref{Prop:2dim} we give an \emph{ad hoc} argument for Theorem \ref{Thm:whitney_link} when $n=2$; the cases $n=3,4$ remain open.

There are non-Lagrangian Whitney sphere  links in $\bR^4$ with non-zero Hopf linking number  \cite{FR, Massey-Rolfsen}.  
Applying results on Lagrangian caps from \cite{EM}, when $n\geq 3$ is odd one can construct Lagrangian Whitney sphere links (with double points of small Maslov index) with non-trivial linking number, see Lemma \ref{Lem:whitney_link}.    
 
The arguments used in the proofs of Theorems \ref{Thm:embLag} and  \ref{Thm:whitney_link} are closely related to arguments of Dimitroglou-Rizell and Evans \cite{DRE}, who prove homological non-linking theorems for monotone Lagrangian links whose components are diffeomorphic either to tori or to products $S^1 \times S^{n-1}$, and who further prove that the smooth isotopy type of a Lagrangian $S^1\times S^{n-1}$ in $\R^{2n}_{\st}$ is determined by its Lagrangian frame map when $n>4$. Our work, like theirs, relies on neck-stretching to localize holomorphic disks away from such monotone Lagrangian submanifolds, but we also appeal to the ``framed moduli spaces" machinery of \cite{EkholmSmith,EkholmSmith2}, and the Pontryagin-Thom construction to control homotopical rather than homological information.  As in \cite{DRE}, one can then infer results up to smooth isotopy. For example, any two Lagrangian Whitney sphere links for which the double points on each component are of Maslov index $n$ are smoothly ambient isotopic. Indeed, the nullhomotopy provided by Theorem \ref{Thm:whitney_link} implies that the links are formally isotopic, and the $h$-principle underlying  \cite[Theorem 1.3]{Ethesis} then yields a smooth ambient isotopy. Similarly, the homotopy result, Theorem \ref{Thm:yasha}, together with the $h$-principle in \cite{haefliger} shows that the Lagrangian disk $L$ is smoothly isotopic rel boundary to the fiber $T_{x}\R^{n}$ in the complement of the fiber $T_{0}\R^{n}$. For related smooth embedding results for (unparameterized) Lagrangian disks, see \cite[Corollary 3.10]{EkholmKraghSmith}.

\Acknowledgements  I.S. is grateful to Ailsa Keating and Oscar Randal-Williams for helpful conversations, to Yakov Eliashberg for asking the question, and to the Mittag-Leffler institute, where this project originated, for hospitality. T.E. thanks Thomas Kragh for useful discussions. The authors are grateful to the referee for suggesting clarifications to the exposition.

\section{Geometric stabilization for Lagrangian disks}
In this section we consider a stabilization procedure for Lagrangian disks in the setting of Corollary \ref{Cor:yasha} which corresponds to suspension of the induced map $f_{L}$.

\subsection{Conventions for Lagrangian disks with Legendrian boundary in $\R^{2n}$\label{Sec:leg_boundary}} 
In this section we introduce a specific convention for Lagrangian disks that is convenient for our study of associated homotopy classes.  
We consider a more general setting for Lagrangian disks in $\R^{2n}_{\st}$ that generalizes that considered above. Let $\Lambda$ be a Legendrian sphere in $\R^{2n-1}_{\st}=T^{\ast}\R^{n-1}\times \R$ with contact form $dz-y\cdot dx$, where $(x,y)$ are standard coordinates on $T^{\ast}\R^{n-1}$ and $z$ on $\R$. The symplectization of $\R^{2n-1}_{\st}$ is the symplectic manifold $\R\times\R^{2n-1}_{\st}$ with symplectic form $d(e^{t}(dz-y\cdot dx))$, where $t$ is a coordinate on the additional $\R$-factor. The symplectization then contains the Lagrangian cone $\R\times\Lambda$ on $\Lambda$. 

Consider embeddings of half of the symplectization into $\R^{2n}$ of the following form:
\begin{align*}
&\psi_{a}\colon [0,\infty)\times \R^{2n-1}_{\st} \ \to \ \ \R^{2n}_{\st}\cap \{x_{1}\ge 1\},\\
&\psi_{a}(t,x,y,z)=(e^{t},z,x,e^{t}y)+(a,0,0,0),\quad a>0. 
\end{align*}
     
Note that the image of $[0,\infty)\times\Lambda$ is a Lagrangian cylinder in $\{x_1\ge a\}$. (Here $\{x_1=a\}$ is a contact hypersurface and the Lagrangian cylinder lies in the corresponding cylindrical end.)

We will consider Lagrangian disks in $\R^{2n}$ with Legendrian boundary, which we define as Lagrangian embeddings of $\R^{n}$ that agree with a Lagrangian cylinder on a Legendrian sphere outside any disk of sufficiently large radius.

More precisely, let $\lambda\colon S^{n-1}\to\R^{2n-1}$ be a Legendrian embedding. Let $g\colon \R^{n}\to \R^{2n}$ be a Lagrangian embedding and let $(r,\xi)\in [0,\infty)\times S^{n-1}$ be polar coordinates on $\R^{n}$. If there exists $r_{0}>0$ and $a>0$ such that 
\[ 
g(r,\xi)=\psi_{a}\left(\log(r/r_{0}),\lambda(\xi)\right), \quad \mathrm{for} \, r\geq r_0,
\]
then $g$ is a \emph{Lagrangian disk with Legendrian boundary $\Lambda=\lambda(S^{n-1})$}.

\begin{Remark}
Lagrangian disks which agree with a cotangent fiber in $\R^{2n}\approx T^{\ast}\R^{n}$ can be isotoped to Lagrangian disks with Legendrian boundary as defined above. The Legendrian boundary of such a Lagrangian disk is the standard Legendrian unknot.
\end{Remark}

Let $L=g(\R^{n})$ be a Lagrangian disk with Legendrian boundary such that $g(\R^{n})\cap T^{\ast}_{S^{k}}\R^{n}=\varnothing$.  Let $h(r) = r/r_0 + a$. Then for, $r_1>0$ sufficiently large, $\pr\circ g|_{\{r\le r_{1}\}}$ gives a map from a disk $D$ into $\R^{n}\setminus S^{k}$ such that $D$ maps into $\{x_1\le h(r_{1})\}$, and $\partial D=\{r=r_1\}$ maps to $\{x_{1}=h(r_{1})\}$. Furthermore, there exists a constant $K>0$, depending on the Legendrian boundary of $L$ such that if $r_{1}$ is sufficiently large then $\pr(g(D))$ lies inside the radius $K\cdot h(r_{1})$ ball $B_{K\cdot h(r_{1})}$ around the origin in $\R^{n}$. In particular, we get a map
\[
\xymatrix{
{f_{L,r_{1}}\colon S^n=D/\partial D} \ar[rr]^-{\pr\circ g}  && (B_{K\cdot h(r_{1})}\setminus S^{k})/(\{x_1\ge h(r_{1})\}\cup\partial B_{K\cdot h(r_{1})} ) \,  \simeq \, S^{n-k-1}.
}
\]
It is clear that there exists $r_{1}>0$ such that the homotopy class of $f_{L,r}$ is independent of $r$ for all $r>r_{1}$. We write $[f_{L}]\in \pi_{n}(S^{n-k-1})$ for this homotopy class.  This  agrees with the corresponding homotopy class discussed in Section \ref{Sec:intro}.

\subsection{Half rotations and suspension\label{Subsec:half}}
Consider a Lagrangian disk $L$ with Legendrian boundary $\Lambda$ in $\R^{2n}_{\st}$ as above. We will construct from $L$ a Legendrian submanifold $\Gamma(L)\subset\R^{2n+1}_{\st}$, which can be thought of as a double of $L$ and which bounds a Lagrangian disk $C(L)$ in $\R^{2n+2}_{\st}$. 

We construct $\Gamma(L)$ by defining its front. Recall that if $\Sigma$ is a Legendrian submanifold of $J^{1}(\R^{n})=T^{\ast}\R^{n}\times \R$ then the front of $\Sigma$ is the 0-jet projection $f_{\Sigma}=\pr^{0}\colon \Sigma \to J^{0}(\R^{n})=\R^{n}\times\R$. Furthermore, if $\Sigma$ is in general position with respect to this projection then $f_{\Sigma}$ determines $\Sigma$ as follows. Let $\pr_{b}\colon\R^{n}\times\R\to\R^{n}$ denote the base projection. A generic point in the image of $\pr_{b}\circ f_{\Sigma}$ has a neighborhood $U$ such that $\pr_{b}^{-1}(U)\cap f_{\Sigma}(\Sigma)$ is the graph of a finite number of local functions $f_{j}\colon U\to\R$. The fiber coordinates $y=(y_1,\dots,y_n)$ are then determined over $U$ by the equations $y_j=\frac{\partial f_j}{\partial x_j}$, and the genericity condition ensures that these solutions continue over the singular locus (which has codimension one). Similarly, we define the front of an exact Lagrangian submanifold $L\subset\R^{2n}$ by picking a Legendrian lift into $\R^{2n}\times\R$ and taking the front projection into $\R^{n}\times\R$ of that lift. If $L$ is connected then its front is well-defined up to an over all $\R$-translation. 

We now turn to the actual construction, see Figure \ref{fig:double}. Consider the front $g_{L}$ of $L\cap\{x_1\le a\}$, for some $a>0$ sufficiently large that $L$ is a cone on its Legendrian boundary in the region $\{x_1\ge a-1\}$. Translating $L$ by $-a$ in the $x_{1}$-direction we get a front $g_{L}'$ lying over the half space $(-\infty,-1]\times \R^{n-1}\times\R$. If 
\[
f_{\Lambda}\colon S^{n-1}\to\R^{n-1}\times\R, \quad f_{\Lambda}=(f_{\Lambda}^{b},z_{\Lambda})
\]
 is the front for the Legendrian boundary $\Lambda$, then for $-2\le x_1\le -1$ the front $g'_{L}$ on $S^{n-1}\times[-2,-1]$ is  given by
\[ 
g'_{L}(\xi,r) = \left(r,f_{\Lambda}^{b}(\xi),\psi(r)z_{\Lambda}(\xi)\right),
\]
for a monotone increasing positive function $\psi$. Denote the Lagrangian disk in $\{x_{1}\le -1\}\subset\R^{2n}$ defined by the front $g'_{L}$ by $L^{-}$. 

The Lagrangian disk $L^{-}$ constitutes almost half of the Lagrangian projection of $\Gamma(L)$.  We next define the other half $L^{+}$: 
\[
L^{+}=T(L^{-}),
\]  
where $T\colon\R^{2n}\to\R^{2n}$ is the composition of reflections in the hyperplanes $\{x_1=0\}$ and $\{y_1=0\}$. The front of $L^{+}$  is then given by the function $g_{L}''$ on $S^{n-1}\times[1,2]$, where
\[ 
g_{L}''(\xi,r)=\left(r,f_{\Lambda}^{b}(\xi),\psi(-r)z_{\Lambda}(\xi)\right).
\]

With $L^{-}$ and $L^{+}$ defined we can now define the front of $\Gamma(L)$ in $\R^{n}\times\R$ as follows: 
\begin{enumerate}
\item over $\{x_1\le -1\}$ it agrees with the front of $L^{-}$, 
\item over $\{x_1\ge 1\}$ it agrees with the front of $L^{+}$, 
\item in the region $\{-2\le x_1\le 2\}$ the front is given by
\[ 
g_{L}(\xi,r)=\left(r,f_{\Lambda}^{b}(\xi),\phi(r)z_{\Lambda}(\xi)\right),
\]
where $\phi\colon[-2,2]\to\R$ is a Morse function with a maximum at $r=0$ and no other critical points, and such that $\phi(r)=\psi(r)$ for $-2<r<-1$, $\phi(r)=\psi(-r)$ for $1<r<2$.
\end{enumerate}

By construction, $\Gamma(L)$ is a Legendrian submanifold of $\R^{2n+1}$, whose Reeb chords all lie in the slice $\{x_1=0=y_1\}$, where they agree with the Reeb chords of $\Lambda$. (There are no Reeb chords of $\Gamma(L)$ outside the slice since the Lagrangian disk $L$, which is the projection along the Reeb direction, is embedded). If $c$ is a Reeb chord of $\Lambda$ then we denote by $\hat c$ the corresponding Reeb chord of $\Gamma(L)$. It follows from the ``front formula" for Maslov grading \cite[Lemma 3.4]{EES} that the gradings of the two Reeb chords are related by
\[ 
|\hat c|=|c|+1.
\]

We next construct a Lagrangian disk $C(L)\subset \R^{2n+2}_{\st}$ with Legendrian boundary $\Gamma(L)$, see Figure \ref{fig:double}. Again we construct it by defining its front. Consider the ``left" half of the front of $\Gamma(L)$, $g_{L}\colon D^{n}\to\R^{n}_{+}\times\R$, which lies over the half plane $\{x_1\le 0\}$.  Then $g_L$ takes the boundary $\partial D$ to the front of $\Lambda$ in $0\times\R^{n-1}\times\R$. 

Consider now $\R^{n+1}$, with coordinates $(x_1,x_2,\dots,x_n,x_{n+1})$, as 
an open book with binding $\R^{n-1}$ corresponding to the last $n-1$ coordinates and with page $\R^{n}_{+}$ with coordinates $(r\cos\theta,r\sin\theta,x_{3},\dots,x_{n+1})$, $r\ge 0$. Define the immersed Lagrangian disk $C'(L)$ to be that defined by the front which lies over the pages with positive $x_2$-coordinate (i.e., $0\le\theta\le \pi$) and which in each such page agrees with the front $g_{L}$. Then $C'(L)$ is an immersed Lagrangian, all of whose double points lie in the binding, these double points corresponding exactly to the Reeb chords of $\Lambda$. For small $\epsilon > 0$, the part of the front of $C'(L)$ which lies over $\{x_2\ge\epsilon\}$ defines an embedded Lagrangian disk, which near the boundary looks like a cone on the Legendrian sphere $\Gamma(L)$. After small deformation of the front we may then add a half-infinite cone on the front of $\Gamma(L)$ to obtain a Lagrangian disk with Legendrian boundary in the sense defined before. This is our desired Lagrangian disk $C(L)$.   

\begin{figure}[ht]
\centering
\includegraphics[width=.6\linewidth]{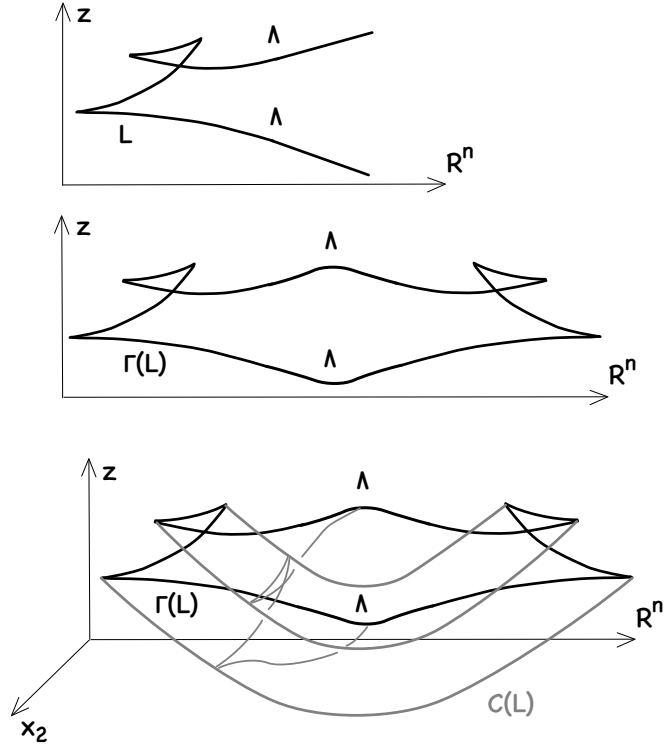}
	\caption{The fronts of: $L$ (top), $\Gamma(L)$ (middle) and $C(L)$ (bottom).}
	\label{fig:double}
\end{figure}

\begin{Lemma}
Let $L$ be a Lagrangian disk with Legendrian boundary $\Lambda$ in $\R^{2n}$.
Assume that $L$ is disjoint from $T^{\ast}\R^{n}|_{S^{k}}$. Then one can construct the Lagrangian disk $C(L) \subset \R^{2n+2}$, with boundary $\Gamma(L)$, so that $C(L)$ is disjoint from $T^{\ast}\R^{n+1}|_{S^{k}}$.
\end{Lemma}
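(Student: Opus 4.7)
The plan is to read off $\pr(C(L)) \subset \R^{n+1}$ explicitly from the open book construction and to place $S^k$ so that any potential intersection with $\pr(C(L))$ gets reduced to the apex of the connector, which I will then handle by the hypothesis.

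On each page at angle $\theta \in [0, \pi]$, the base projection of the compact piece of $C(L)$ is a copy of the base projection of the left half of $\Gamma(L)$ in $\R^n$, related by the identification $r = -X_1$ (so $X_1 \leq 0$) and $(x_3, \ldots, x_{n+1}) = (X_2, \ldots, X_n)$ between the old coordinates $(X_1,\ldots,X_n)$ on $\R^n$ and the new coordinates $(x_1,\ldots,x_{n+1})$ on $\R^{n+1}$. Explicitly, the compact part of $\pr(C(L))$ equals
\[
\{(|X_1|\cos\theta,\ |X_1|\sin\theta,\ X_2,\ldots,X_n) : (X_1,\ldots,X_n) \in \pr(\text{left half of } \Gamma(L)),\ \theta \in [0,\pi]\},
\]
while the cylindrical end of $C(L)$ lives at arbitrarily large first coordinate and can be placed disjoint from the bounded set $S^k$.

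I will choose the open book so that $S^k \subset \R^{n+1}$ lies inside the binding $B = \{x_1 = x_2 = 0\} \cong \R^{n-1}$; this is possible because $k+1 \leq n-1$. Off the binding, $\pr(C(L))$ has $(x_1,x_2) \neq (0,0)$, hence is automatically disjoint from $S^k \subset B$. On the binding, $\pr(C(L))$ is supported at $X_1 = 0$, namely at the apex of the connector, where the explicit front formula $(r, f_\Lambda^b(\xi), \phi(r) z_\Lambda(\xi))$ gives the contribution $\{(0,0)\} \times f_\Lambda^b(S^{n-1})$. Disjointness therefore reduces to the planar condition $f_\Lambda^b(S^{n-1}) \cap S^k = \varnothing$ inside the binding $\R^{n-1}$.

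To arrange this last condition, I will apply a large Hamiltonian translation of $L$ in a direction orthogonal to the radial axis. Such a translation moves $f_\Lambda^b(S^{n-1})$ arbitrarily far from the bounded $S^k \subset \R^{n-1}$, and for sufficiently large translation amount it preserves $\pr(L) \cap S^k = \varnothing$ in $\R^n$: the bounded compact part of $L$ is pushed past $S^k$, while the cylindrical end already sits at $X_1 \geq a > 1$ and is automatically disjoint from the unit sphere. The translated $L$ is still a Lagrangian disk with (translated) Legendrian boundary in the sense of Section~\ref{Sec:leg_boundary}, and applying the doubling and open book construction to it then yields the desired $C(L)$. The main technical point will be the apex analysis, namely confirming that $\pr(C(L))$ meets the binding exactly along $\{(0,0)\} \times f_\Lambda^b(S^{n-1})$ and that the Hamiltonian translation preserves both the hypothesis and the Lagrangian-disk-with-Legendrian-boundary structure.
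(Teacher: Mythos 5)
The student's approach is genuinely different from the paper's and contains a substantive gap. The paper's proof simply places $S^k$ inside a single page of the open book: pages meet only along the binding, and the restriction of $\pr(C(L))$ to any given page is (up to the identification $r=-X_1$) a copy of the base projection of the left half of $\Gamma(L)$, which is disjoint from $S^k$ by the hypothesis on $L$. The student instead places $S^k$ in the binding, which forces a confrontation with the apex locus, and then attempts to escape via a large translation of $L$.

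The translation step is where the proof breaks. Translating $L$ by a large vector orthogonal to the $x_1$-axis replaces $L$ with a different Lagrangian disk $L'$; the student argues (correctly) that the final position satisfies $\pr(L')\cap S^k = \varnothing$, but this is not the issue. First, literally, the lemma asks for a construction of $C(L)$ with boundary $\Gamma(L)$; the student produces $C(L')$ with boundary $\Gamma(L')$. Second, and more seriously for the application, the translation isotopy passes the compact part of $\pr(L)$ through the region occupied by $S^k$ (the student's own phrase ``pushed past $S^k$'' concedes this), so there is no reason that $[f_{L'}]=[f_L]$ — indeed, pushing $\pr(L)$ far away generically kills this homotopy class. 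Since this lemma exists precisely to feed into Lemma~\ref{Lem:suspension} and the proof of Corollary~\ref{Cor:yasha}, where one needs $[f_{C(L)}]=\Sigma[f_L]$ for the \emph{given} $L$, replacing $L$ by $L'$ with a potentially different (typically trivial) homotopy class is not an allowable move.

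A secondary issue: the apex analysis concerns the immersed $C'(L)$, but the embedded disk $C(L)$ is assembled from the part of $C'(L)$ lying over $\{x_2\geq\epsilon\}$ (which is already disjoint from the binding $\{x_1=x_2=0\}$) together with a cylindrical end and a small deformation of the front. You should verify directly which parts of $\pr(C(L))$, rather than $\pr(C'(L))$, can meet the binding; the answer may make the translation step unnecessary even on your own terms, but as written the analysis is of the wrong object. The paper's placement of $S^k$ in a page avoids both difficulties, requires no modification of $L$, and uses the original hypothesis exactly where it is needed.
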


\begin{proof}
Take $S^{k} \subset \R^{n+1}$ to lie in an $\R^{n}_{+}$-page of the open book decomposition of $\R^{n+1}$ used in the construction of $C(L)$.  It then follows immediately from the construction that $C(L)$ is disjoint from $T^{\ast}\R^{n+1}|_{S^{k}}$. 
\end{proof}

Recall that in this situation, we associated maps $f_L\colon S^{n}\to S^{n-k-1}$ and $f_{C(L)}\colon S^{n+1}\to S^{n-k}$ to $L$ and to $C(L)$, respectively.

\begin{Lemma} \label{Lem:suspension}
The map $f_{C(L)}\colon S^{n+1}\to S^{n-k}$ is homotopic to the suspension of the map $f_{L}\colon S^{n}\to S^{n-k-1}$ associated to $L$,  i.e. $f_{C(L)} \simeq \Sigma f_L$.
\end{Lemma}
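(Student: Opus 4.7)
The plan is to make the base projection $\pr\circ g_{C(L)}$ explicit in the open book coordinates on $\R^{n+1}$ and then to match this rotational picture with the topological definition of the suspension of $f_L$.

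By construction, the base projection of $C'(L)$ inserts a copy of the half-front of $\Gamma(L)$ into each page of the open book as $\theta$ ranges over $[0,\pi]$. Writing the base projection of this half-front as $\bar g\colon D^n\to\R^n$ with coordinates $\bar g(\xi)=(r(\xi),\sigma(\xi))$, $r(\xi)\ge 0$ and $\sigma(\xi)\in\R^{n-1}$, the base projection of $C'(L)$ is therefore parametrized by
\[
\bar\Phi(\xi,\theta)=\bigl(r(\xi)\cos\theta,\ r(\xi)\sin\theta,\ \sigma(\xi)\bigr),\qquad (\xi,\theta)\in D^n\times[0,\pi].
\]
For each $\theta\in(0,\pi)$ this restricts, in the page at angle $\theta$, to a copy of the base projection of $L$, while at $\theta=0$ and $\theta=\pi$ the slice collapses onto the binding half-plane $\{x_2=0\}$.

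For the source, collapsing the boundary $\bigl(D^n\times\{0,\pi\}\bigr)\cup\bigl(\partial D^n\times[0,\pi]\bigr)$ of $D^n\times[0,\pi]$ to a point produces $S^{n+1}$ in two steps: collapsing $\partial D^n$ within each slice first gives $S^n$ per slice, and then identifying the two endpoint slices exhibits the result as $\Sigma S^n$. For the target, by the preceding lemma we may place $S^k$ inside the page at angle $\pi/2$. A sphere representing $\R^{n+1}\setminus S^k\simeq S^{n-k}$ is then given by the join of two points on the binding half-plane $\{x_2=0\}$ with a meridional $(n-k-1)$-sphere linking $S^k$ inside the page at $\pi/2$; this exhibits $S^{n-k}\cong\Sigma S^{n-k-1}$, with the equatorial $S^{n-k-1}$ being precisely the target of $f_L$ viewed inside the page at $\pi/2$.

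Under these identifications, $f_{C(L)}$ becomes a map $\Sigma S^n\to\Sigma S^{n-k-1}$ that preserves the suspension parameter $\theta$: for each intermediate level $\theta\in(0,\pi)$ one rotates the page at angle $\theta$ back to the page at $\pi/2$, which transports the slice $\bar\Phi(\,\cdot\,,\theta)$ to a copy of the base projection of $L$ inside the page at $\pi/2$, and composing with the retraction to the equatorial $S^{n-k-1}$ recovers $f_L$; at the poles $\theta=0,\pi$ the slice lands in the binding half-plane, which retracts onto the corresponding pole of the target suspension. Hence $f_{C(L)}\simeq\Sigma f_L$.

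The main obstacle is the auxiliary cap-off that turns $C'(L)$ into the Lagrangian disk $C(L)$ with Legendrian boundary $\Gamma(L)$, since this adds further support to the base projection near $\Gamma(L)$. This additional support lies in a small neighborhood of the open book binding $\R^{n-1}$; with $S^k$ placed inside the page at angle $\pi/2$ and at positive distance from the binding, the cap-off region maps into a contractible subset of $\R^{n+1}\setminus S^k$, which is collapsed to the basepoint in the quotient defining $f_{C(L)}$ and therefore does not affect the identification of $f_{C(L)}$ with $\Sigma f_L$.
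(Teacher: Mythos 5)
Your approach is genuinely different from the paper's, which gives a short Pontryagin--Thom argument: the framed preimage of a generic ray $\ell$ (lying in a page) under $f_{C(L)}$ is identified with the framed preimage under $f_L$, stabilized by the vector field $\partial_\theta$ transverse to the pages, and constant stabilization of the normal framing of a Pontryagin--Thom preimage is exactly what characterizes suspension. You instead try to realize $f_{C(L)} = \Sigma f_L$ directly at the level of maps, by matching suspension decompositions of source and target via the open book angle. The geometric input (the role of $\theta$) is the same; what differs is whether it is exploited on the framed-cobordism side or the mapping side of the Pontryagin--Thom correspondence.

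There is, however, a gap at the key step. You assert that ``under these identifications, $f_{C(L)}$ becomes a map $\Sigma S^n\to\Sigma S^{n-k-1}$ that preserves the suspension parameter $\theta$'', and then describe the prescription: rotate the slice at angle $\theta$ into the page at $\pi/2$, retract within that page onto the linking $S^{n-k-1}$, and record the level $\theta$. But this prescription is, by construction, a model for $\Sigma f_L$; it is not the map $f_{C(L)}$, which is $\bar\Phi$ composed with a single global deformation retraction of $S^{n+1}\setminus S^k$ onto a linking $(n-k)$-sphere. A global retraction does not act page-by-page (indeed a literal page-by-page recipe is discontinuous across the binding, which $\bar\Phi$ does touch along $\partial D^n\times[0,\pi]$), so the identity of the two maps is precisely what needs to be proved, not what can be read off from the identifications. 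A viable way to close the gap is to use the reflection $x_1\mapsto -x_1$, which fixes $S^k$ once it is placed in the page at $\pi/2$: an equivariant retraction sends the half-spaces $\{\pm x_1>0\}$ to the two open hemispheres of the linking $S^{n-k}$, restricts on $\{x_1=0\}$ to the page retraction giving $f_L$, and sends the slices $\theta=0,\pi$ into the poles; since any two maps of a cone into a cone agreeing on the base and on the cone point are homotopic rel base, this hemisphere/equator/pole data determines the homotopy class and exhibits $f_{C(L)}$ as $\Sigma f_L$. As written, though, this step is asserted rather than established, and the paper's Pontryagin--Thom argument avoids the issue entirely.
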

 
\begin{proof}
To see this we use the Pontryagin-Thom construction. The preimage of a point in $S^{n-k-1}$ under $f_{L}$ corresponds to the intersection of $L$ and $T^{\ast}_{\ell}\R^{n}$, where $\ell$ is a half-infinite line perpendicular to $S^{k}\subset\R^{n}$. For generic $\ell$ this intersection is an orientable $(k+1)$-manifold, framed by a basis of the tangent space to the space of half-lines perpendicular to $S^{k}$ at the base point of $\ell$.

Similarly, the preimage of a point under $f_{C(L)}$ is given by the intersection of $C(L)$ and a ray $\ell$, which we can take to be the same ray as before. If we assume the ray $\ell$ lies in a page,  then it follows that the preimage is the same as that for $f_{L}$ but the normal framing is extended by the constant vector field $\partial_\theta$ (where $\theta$ is a co-ordinate in the parameter space of the pages of the open book). Such a constant stabilisation of the normal framing is exactly what happens with preimages of submanifolds of a sphere under suspension of maps. The lemma follows.   
\end{proof} 

\section{Floer-holomorphic disks on immersed Lagrangian spheres}\label{Sec:modulispaces}
A main ingredient in the proofs of all our results is the moduli space of Floer holomorphic disks with boundary on an immersed Lagrangian sphere in $\R^{2n}_{\st}$. This space was studied in detail in \cite{EkholmSmith2}. In this section we recollect and extend these results.  We assume $n>3$ unless explicitly stated otherwise.

\subsection{Lagrangian sphere immersions\label{sec:basic}} Let $\phi\colon S^{n}\to\R^{2n}_{\st}$ be a Lagrangian immersion. The tangential Gauss map takes any point $p\in S^{n}$ to the Lagrangian tangent plane $d\phi(T_{p}S^{n})$ and thus defines a map $G_{\phi}\colon S^{n}\to U(n)/O(n) \to U/O$, where $U(n)/O(n)$ is the Lagrangian Grassmannian of Lagrangian $n$-planes in $\R^{2n}_{\st}$, and the map to $U/O$ is given by stabilization. After small perturbation $\phi$ is self-transverse and hence has only transverse double points and no other self intersections. If $a$ is a transverse double point of $\phi$ then there is an integer Maslov grading $|a|$ associated to $a$, see \cite{EkholmSmith2}.

The standard example of a Lagrangian sphere immersion is the \emph{Whitney sphere}, defined as follows. Equip $\R^{2n}_{\st}$ with the standard complex structure $i$ and view it as complex  $n$-space $\C^{n}$ with coordinates $x+iy\in \R^n+i\R^{n}$. Consider $S^{n}$ as the unit sphere in $\R^{n+1}$: 
\[
S^n = \left\{(x,y_1) \in \R^n \times\R \ :  \ |x|^2 + y^2_1 = 1\right\}.  
\]
The Whitney immersion is the map $w\colon S^{n}\to\C^{n}$ given by
\begin{equation}\label{eq:whitneysphere} 
w(x,y) =  x(1+iy_1).
\end{equation}
Then $w$ has one self-transverse double point $c$ with preimages at $(0,\pm 1)$ and no other singularities. The Maslov grading of the double point is $|c|=n$.  

We will assume throughout this section that $\phi\colon S^{n}\to\R^{2n}_{\st}$ is a self transverse real analytic Lagrangian immersion with exactly one double point $a$ of Maslov grading $|a|=n$. We say that $\phi$ is \emph{tangentially standard} if its stable tangential Gauss map $G_{\phi}$ is homotopic to the stable tangential Gauss map $G_{w}$ of the Whitney immersion.

Fix a primitive $\theta$ of the symplectic form, $d\theta = \omega_{\st}$. 
The immersed sphere $\phi\colon S^n \to \R^{2n}_{\st}$ has a Legendrian lift $\phi \times z\colon S^n \to \R^{2n}_{\st} \times \R$, where the contact form on $\R^{2n}_{\st}\times \R$ is $dz-\theta$, defined with respect to a choice $z\colon S^n \to \R$ of function satisfying $\phi^*\theta = dz$. In the case under consideration $\phi$ has a single double point and $\phi\times z$ is an embedding.  We can then distinguish between small disjoint disk neighborhoods $V^{\pm}$ of the two preimages of the double point of $\phi(S^n)$, by declaring that $V^+$ lives in the upper sheet and $V^-$ in the lower sheet of the image of $\phi \times z$.

\subsection{Moduli spaces from displacing Hamiltonians} Fix an almost complex structure $J$ on $\R^{2n}_{\st}$ which is standard in a sufficiently small neighborhood of $\phi(S^n)$. Let $H=H_{t}\colon \R^{2n}\to \R$ be a time dependent Hamiltonian function with associated Hamiltonian vector field $X_{H_t}$ and time-one flow $\psi_H^1$.  We suppose that $\psi_H^1(\phi(S^n)) \cap \phi(S^n) = \varnothing$.  As in \cite[Section 3.1, Equation (3.3)]{EkholmSmith} we fix a 1-parameter family of 1-forms $\gamma_{r} \in \Omega^1(D)$ on the 2-dimensional closed disk $D$, with $r\in[0,\infty)$, such that $\gamma_0 \equiv 0$, and such that with respect to a fixed conformal isomorphism $D \setminus \{\pm1\} \to \R\times [0,1]$, and with co-ordinates $(s,t) \in \R\times [0,1]$, $\gamma_r$ has compact support and for $r\gg0$ agrees with $dt$ on $[-r,r]\times [0,1]$.  

Consider the Floer equation for maps $u\colon (D,\partial D)\to (\R^{2n}_{\st},\phi(S^{n}))$, such that $u|_{\pa D}$ admits a continuous lift to $S^{n}$,
\begin{equation}\label{eq:Floer}
(du + \gamma_{r}\otimes X_{H})^{0,1}=0, 
\end{equation} 
where the complex anti-linear part is taken with respect to  $J$ and the standard complex structure on $D$, and where $X_{H}=X_{H}(u(z),t(z))$ for the function $t\colon D\to[0,1]$ defined by the second co-ordinate in our fixed conformal equivalence $D-\{\pm 1\} \to\R\times[0,1]$. For fixed $r\in[0,\infty)$, we write $\FF^{r}$ for the space of solutions of \eqref{eq:Floer} and we write
\[ 
\FF=\bigcup_{r\in[0,\infty)}\FF^{r}
\]
for the corresponding parametrized moduli space.

\begin{Lemma}
	The space $\FF^{r}$ has formal dimension $n$; $\FF$ has formal dimension $n+1$. For generic data the formal dimension equals the actual dimension, and $\FF$ has boundary
	\[ 
	\partial\FF=\FF^{0}\approx_{C^{1}} S^{n}.
	\]
\end{Lemma}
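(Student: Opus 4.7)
The proof has four parts: a Fredholm index computation, a transversality argument, identification of the $r=0$ locus with $S^n$, and the manifold-with-boundary structure along $\{r=0\}$.

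First I would compute the index. Linearizing \eqref{eq:Floer} at a solution $u$ gives a Cauchy--Riemann type operator on $u^*T\R^{2n}_{\st}$ with totally real boundary condition pulled back from the lift $\tilde u|_{\partial D}\colon\partial D\to S^n$ via the Gauss map of $\phi$; its Fredholm index is $n+\mu(u)$, where $\mu(u)$ is the Maslov index of the resulting loop of Lagrangian subspaces along $\partial D$. Since $\pi_1(S^n)=0$ for $n\geq 2$, the lifted boundary loop is contractible in $S^n$ and $\mu(u)$ is locally constant on the component containing constants, where it equals zero. Hence $\FF^r$ has formal dimension $n$, and letting $r\in[0,\infty)$ vary adds one more parameter, so $\FF$ has formal dimension $n+1$.

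For transversality, a standard Sard--Smale argument applied to generic choices of $J$ or of the family $H_t$ makes the universal linearized operator surjective, so the actual dimensions agree with the formal ones. At $r=0$ automatic regularity applies directly: the linearization at a constant disk $u\equiv\phi(p)$ is the standard $\bar\partial$ on the trivial bundle $\C^n$ with constant totally real boundary condition $d\phi(T_p S^n)$; its kernel is the $n$-dimensional space of constant sections valued in $d\phi(T_p S^n)$, and its cokernel vanishes. Surjectivity persists trivially when the $\partial_r$ direction is adjoined.

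For the identification $\FF^0\approx_{C^1} S^n$, the key observation is that at $r=0$ the equation reduces to $\bar\partial_J u=0$ with boundary on the exact Lagrangian $\phi(S^n)$. Exactness of $\omega_{\st}$ combined with contractibility of the lifted boundary loop in $S^n$ gives, via Stokes' theorem applied to a primitive of $\phi^{\ast}\theta=dz$ on $S^n$, zero symplectic area for every such disk, forcing $u$ to be constant. The evaluation map $\FF^0\to S^n$, $u\mapsto\tilde u(1)$, is then a continuous bijection, with the two preimages of the double point parameterizing two distinct constants distinguished by their lifts. Smooth dependence of constants on their value promotes this to a $C^1$-diffeomorphism, because the tangent-space identification at each constant coincides with the canonical isomorphism $T_{u}\FF^0\cong T_p S^n$ via the kernel description above.

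Finally, to see that $\FF$ is a manifold with boundary $\FF^0$, I would apply the implicit function theorem parametrically at each constant solution: the linearization with $r\in[0,\infty)$ adjoined is surjective and has $(n+1)$-dimensional kernel containing the $\partial_r$ direction, producing local manifold-with-boundary charts in which $\{r=0\}$ is exactly the boundary stratum. The main obstacle I anticipate is uniform transversality across the full family $\FF$ away from the boundary, which I would handle by the parametric Sard--Smale scheme and the perturbation framework already used in \cite{EkholmSmith, EkholmSmith2}.
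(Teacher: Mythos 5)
Your proof is correct and follows essentially the same route as the paper's: index formula $n+\mu$ for the Riemann--Hilbert problem with the Maslov term vanishing because the boundary loop lifts to the simply connected $S^n$, transversality for generic data, and constancy of $r=0$ solutions by exactness. The paper's treatment is much terser — it simply cites \cite[Lemma 3.4]{EkholmSmith2} for transversality — whereas you spell out the automatic regularity at constant disks and the implicit-function-theorem argument for the boundary stratum, but there is no substantive difference in approach.
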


\begin{proof}
	The dimension formula is a consequence of the index $n+\mu$ of the Riemann-Hilbert problem in $\C^{n}$ for a Lagrangian boundary condition of Maslov index $\mu$, and the fact that the Maslov index of $S^n$ is zero. The transversality result is  \cite[Lemma 3.4]{EkholmSmith2}. When $r=0$ there is no Hamiltonian term, the Floer equation reduces to the unperturbed Cauchy-Riemann equation, and $\FF^{0}$ consists of constant solutions by exactness.
\end{proof}

In order to describe the compactification of $\FF$, consider spaces $\FF_{j}^{r}$ of solutions $u$ of the Floer equation \eqref{eq:Floer} with $j$ negative boundary punctures, where the disk is asymptotic to the unique double point of $\phi(S^{n})$. More precisely, the source of such a map is the disk $D$ with $j$ boundary punctures $\zeta_1,\dots,\zeta_j$. Consider a punctured arc $I_j$ in $\pa D$ centered around $\zeta_j$ and note that it is subdivided into two components $I_{j}^{-}$ and $I_{j}^{+}$ in the negative and positive direction along the boundary from $\zeta_j$. We require that for any sufficiently small $I_j$, $u(I_j^{-})$ lies in the upper branch $V^+$ of the double point and $u(I_{j}^{+})$ in the lower branch $V^-$. The following results were proved in \cite[Lemma 3.4]{EkholmSmith2}.
\begin{Lemma}\label{Lem:vdim_floer}
	The formal dimensions of $\FF^{r}_{j}$ and $\FF_{j}$ are 
	\[ 
	n-j(n-1) \quad \text{ and }\quad n-j(n-1)+1,
	\]
	respectively. For generic data these spaces are all transversely cut out. In particular, $\FF_{1}$ is a closed one-dimensional manifold and $\FF_{j}$ is empty for $j>1$.
\end{Lemma}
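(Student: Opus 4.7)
The plan is to prove the lemma in three steps: a Fredholm index computation establishing the stated formal dimensions, a Sard--Smale transversality argument for generic data, and a Gromov--Floer compactness analysis to verify that $\FF_1$ is closed and $\FF_j$ empty for $j>1$.

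First, I would compute the Fredholm index of the linearization of \eqref{eq:Floer} at a solution $u\in\FF^r_j$. Starting from $\dim\FF^r=n$ for a disk with Maslov-zero Lagrangian boundary condition, each negative boundary puncture $\zeta_i$ asymptotic to the double point $a$ with $|a|=n$ contributes a net $-(n-1)$: a $+1$ from the modulus of the puncture position on $\partial D$ and a $-n$ from the asymptotic branch-switching condition at $a$ (which acts like a Dirichlet condition along the two transverse Lagrangian branches $V^\pm$ at the double point). Allowing $r$ to vary adds one further dimension, giving the stated formula $\dim\FF_j=n-j(n-1)+1$.

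Transversality for generic $(J,H,\gamma_r)$ then follows by the same Sard--Smale scheme used for $\FF^r$ in \cite[Lemma 3.4]{EkholmSmith2}: form the universal moduli space fibered over the Banach manifold of admissible data, use variations of $H$ supported near the image of a Floer solution (together with unique continuation and the fact that $a$ is the only self-intersection) to surject onto the cokernel of the linearized operator, and extract a residual set of data on which $\FF^r_j$ and $\FF_j$ are smooth manifolds of the formal dimension. When $j\geq 2$ and $n>3$, the formal dimension $n-j(n-1)+1\leq 3-n<0$ is strictly negative, so $\FF_j=\varnothing$ for generic data.

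The main obstacle is establishing closedness of $\FF_1$. Exactness of $\phi$ eliminates interior sphere and non-constant disk bubbling, so the degenerations in the SFT-style compactification reduce to breakings at the double point $a$ and drift of the free puncture $\zeta_1$ into one of the fixed endpoints $\pm 1$. A breaking would decompose the solution into components lying in $\FF_{j'}$ with $j'\geq 2$ or in an auxiliary moduli space of once-punctured disks with a positive asymptotic at $a$; the additive index formula combined with the dimension counts from the first step forces any such configuration to violate the transversality count for $\FF_1$, so generically these do not occur. Drift $\zeta_1\to\pm 1$ is excluded by the exponential convergence of Floer solutions at their strip ends, which keeps $\zeta_1$ uniformly separated from $\pm 1$. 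Together these arguments yield the required compactness and hence the claimed closed one-manifold structure on $\FF_1$.
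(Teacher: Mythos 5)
The paper's ``proof'' of this lemma is a single sentence citing \cite[Lemma~3.4]{EkholmSmith2}, so there is no detailed argument in this paper to compare against; your sketch is a plausible reconstruction of what that reference contains. Your broad structure (index count, Sard--Smale transversality, Gromov--Floer compactness) is the right one, and the observation that $n>3$ makes the $j\ge 2$ formal dimensions negative is exactly the point.

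There is however a genuine gap in your closedness argument for $\FF_1$, and a small wrinkle in the index count. For closedness, the crucial ingredient you omit is compactness in the \emph{parameter} $r$: one must show that solutions of \eqref{eq:Floer} with a puncture cannot exist for $r$ near $0$ (at $r=0$ the equation is unperturbed and exactness forces constants, which do not contribute transversally to $\FF^0_1$) and that $r$ stays bounded above, which is precisely the role of the displacing hypothesis $\psi^1_H(\phi(S^n))\cap \phi(S^n)=\varnothing$ together with the uniform energy bound for solutions of the Floer equation. Without this, $\FF_1$ could fail to be compact. Your argument that drift $\zeta_1\to \pm 1$ is ``excluded by exponential convergence'' is not valid: exponential decay at both ends does not prevent the puncture from migrating in a one-parameter family. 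The correct exclusion comes from Gromov compactness: such a degeneration would produce a broken configuration with either a sphere bubble (excluded by exactness), an extra puncture (landing in a negative-dimensional $\FF_j$), or a component in $\MM$; the latter is precisely the codimension-one stratum that is added back when forming $\overline{\FF}$ from $\FF$, and for the \emph{once-punctured} space $\FF_1$ the corresponding stratum sits in $\FF_2\times\MM=\varnothing$. On the index count, note that your heuristic $(+1)-n=-(n-1)$ reproduces the printed formula but gives $\dim\FF_1=2$ for $j=1$, whereas the lemma itself, and the downstream gluing statement that $\NN=\FF_1\times\MM$ is an $n$-manifold with $\dim\MM=n-1$, require $\dim\FF_1=1$. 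So the per-puncture drop in \cite{EkholmSmith2}'s conventions must actually be $n$; you should reconcile your heuristic with the cited reference rather than take the printed formula at face value.
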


Similarly, we consider the moduli space $\MM$ of unperturbed holomorphic disks with boundary on $\phi(S^{n})$  and one positive puncture at the double point. The following result is proved in \cite[Lemma 3.5]{EkholmSmith2}.
\begin{Lemma}
	The formal dimension of $\MM$ is $n-1$ 
	and for generic data the moduli space is transversely cut out. Furthermore, $\MM$ is a closed $C^{1}$-manifold.
\end{Lemma}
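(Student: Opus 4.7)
\emph{Plan.} The proof has three parts, corresponding to the three assertions: the dimension formula, transversality, and closedness of $\MM$.

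\emph{Dimension.} I would compute the Fredholm index of the linearized Cauchy--Riemann operator for a once-punctured holomorphic disk $u\colon D\setminus\{1\}\to\R^{2n}_{\st}$ with boundary on the exact immersed Lagrangian $\phi(S^n)$ and positive asymptotic at $a$. Exactness gives that the Maslov class of $\phi(S^n)$ vanishes, and the Riemann--Hilbert index formula for a boundary-punctured disk together with the hypothesis $|a|=n$ produces Fredholm index $n+1$. Quotienting by the $2$-dimensional group of biholomorphisms of $D$ that fix the boundary puncture then yields virtual dimension $n-1$. This is the analogue, for one positive puncture rather than negative ones, of the formula in Lemma~\ref{Lem:vdim_floer}.

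\emph{Transversality.} For a generic $\omega_{\st}$-compatible almost complex structure $J$ that agrees with the standard one in a neighborhood of $\phi(S^n)$, I would apply the standard Sard--Smale argument to the universal moduli space to conclude that the linearized operator is surjective at every solution. Real analyticity of $\phi$ provides the unique-continuation input needed near the double point. This is formally identical to the argument of \cite[Lemma 3.5]{EkholmSmith2}. Combined with elliptic regularity, this yields a $C^1$-manifold structure on $\MM$ of the expected dimension.

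\emph{Closedness.} The main point. I would invoke SFT--Gromov compactness and rule out each possible degeneration.
\begin{itemize}
\item \emph{Sphere bubbling.} Ruled out by exactness of $\omega_{\st}$.
\item \emph{Boundary disk bubbles with no puncture at $a$.} Such a disk has symplectic area equal to the difference of the primitive $z$ on $\phi(S^n)$ across its boundary loop, which vanishes; hence the bubble is constant and gives only ghost components.
\item \emph{SFT-breaking at the positive puncture.} A broken configuration would present itself as a tree of holomorphic disks joined along the double point $a$, with a distinguished main component carrying the original positive puncture and $l\ge 1$ negative punctures at $a$. By exactness, the symplectic area of such a main disk equals $(1-l)\cdot\mathrm{action}(a)$, which is $\le 0$ with equality only when $l=1$; but a disk of zero area must be constant and hence cannot carry punctures. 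Thus $l=0$, and no breaking can occur. (Equivalently, the index computation of Lemma~\ref{Lem:vdim_floer} shows that every additional puncture at $a$ drops the virtual dimension by $n-1\ge 3$, so the broken strata are empty for generic data.)
\end{itemize}

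Hence $\MM$ is compact without boundary, and with transversality this gives a closed $C^1$-manifold of dimension $n-1$.

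\emph{Main obstacle.} The delicate step is ruling out SFT-breaking at the positive puncture; the maximal Maslov grading assumption $|a|=n$ (combined with the exactness of $\phi$) is exactly what makes both the action-based and the index-based exclusions work, by ensuring that any additional puncture at $a$ strictly decreases the available virtual dimension.
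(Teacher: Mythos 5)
The paper itself does not prove this lemma; it simply cites \cite[Lemma 3.5]{EkholmSmith2}. Your reconstruction follows the natural path and is essentially correct in the dimension and transversality parts, and your energy argument for compactness is clean and, I think, the most robust way to do it: exactness pins the area of the main component at $(1-l)\mathfrak{a}(a)$, positivity of area for non-constant taming-$J$-holomorphic curves forces $l\le 1$, and $l=1$ is killed because a zero-area curve is constant and a constant map has removable punctures (equivalently, a constant disk with two boundary punctures is unstable).

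Two small caveats. First, the parenthetical index remark is imprecise: you cite Lemma~\ref{Lem:vdim_floer}, but that computes dimensions for the Floer spaces $\FF_j$, where each additional boundary puncture at $a$ lowers the dimension by $n-1$. For the unperturbed moduli spaces $\MM_{(1,l)}$ the correct count (using $\dim\MM_{(1,l)}=|a|-l|a|-1$ and $|a|=n$) gives $\dim\MM_{(1,1)}=-1$, i.e.\ a drop of $n$ from $\dim\MM=n-1$, not $n-1$; this makes the index-based exclusion of $l\ge 1$ valid with no restriction on $n$, which is consistent with your energy argument being dimension-free. The standing assumption $n>3$ is needed elsewhere in this section (to kill $\FF_j$ for $j>1$), not for the closedness of $\MM$. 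Second, the claim that $\MM$ is a $C^1$-manifold is not simply ``elliptic regularity plus Sard--Smale'': after transversality one has a smooth Banach-manifold of parametrized maps, and the $C^1$-structure on the quotient by the $2$-dimensional automorphism group requires the gauge-fixing procedure that the paper itself points to (the discussion immediately after the lemma, referring to \cite[Section A.2]{EkholmSmith2}). It would be worth acknowledging that this step is nontrivial rather than folding it into ``elliptic regularity.''
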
 

The moduli space $\MM$ is a space of holomorphic maps modulo automorphisms, and the construction of the smooth structure above relies on a gauge-fixing procedure described in detail in \cite[Section A.2]{EkholmSmith2}; the gauge-fixing involves fixing parametrizations of maps in $\MM$ so that $\pm i$ map to small spheres centred on the preimages of the double point.  
We will not distinguish notationally between $\MM$ and the corresponding space of maps after gauge-fixing; in particular, in the sequel we will make use of an evaluation map $\MM \times D \to \C^n$, where $D$ denotes the source-disk for maps in the gauge-fixed moduli space corresponding to $\MM$.

\subsection{The Gromov-Floer compactification}  The space $\FF$ is non-compact; it has one boundary component $\FF^0$ which is diffeomorphic to the sphere $S^n$ parametrizing constant solutions. It can be compactified to a $C^1$-smooth compact manifold with boundary by adding broken solutions. 

\begin{Lemma}
	For generic $J$ and $H$, the product
	\[ 
	\NN=\FF_{1}\times \MM
	\]
	is a $C^1$-smooth $n$-manifold, canonically diffeomorphic to the Gromov-Floer boundary of $\FF$. 
\end{Lemma}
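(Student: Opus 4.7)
The plan is to analyze the Gromov--Floer compactification of $\FF$ by enumerating possible degenerations of sequences $u_k \in \FF^{r_k}$, and then to invoke a standard gluing argument to identify the new boundary stratum with $\FF_1 \times \MM$.

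First I would apply Gromov--Floer compactness to a sequence $(r_k, u_k)$ with $r_k \to r_\infty \in [0,\infty]$ that has no $C^\infty_{\mathrm{loc}}$ limit in $\FF$. The resulting stable limit consists of a principal Floer-perturbed disk (a solution of \eqref{eq:Floer} with some number $j \geq 0$ of negative boundary punctures at the double point $a$) together with, at each puncture, a bubble tree of genuine $J$-holomorphic disks with boundary on $\phi(S^n)$ whose total outgoing asymptotic matches the puncture. Because $\R^{2n}_{\st}$ is exact and $\phi \colon S^n \to \R^{2n}_{\st}$ lifts to a Legendrian embedding, an action argument rules out non-constant $J$-holomorphic spheres and non-constant unpunctured $J$-holomorphic disks on $\phi(S^n)$, so every bubble component carries at least one puncture at $a$. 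In addition, sequences with $r_k \to 0$ cannot degenerate, since $\FF^0$ consists of constant maps.

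Second, the dimension formulas together with the vanishing $\FF_j = \varnothing$ for $j > 1$ constrain these configurations drastically. A principal component in $\FF_j$ with $j \geq 2$ does not exist, so the principal component lies in $\FF_1$. At its single puncture, a bubble tree consisting of more than one once-punctured disk in $\MM$ lives in formal dimension strictly less than $\dim \MM = n-1$ (each interior node costs one dimension), and hence does not contribute to the codimension-one stratum of the $(n{+}1)$-dimensional moduli space $\FF$. What remains is exactly the stratum $\FF_1 \times \MM$, of dimension $1 + (n-1) = n$, as predicted.

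Third, I would produce the asserted $C^1$-diffeomorphism by a standard gluing argument at the self-intersection point $a$, carried out within the gauge-fixing and weighted Sobolev framework of \cite[Section A]{EkholmSmith2}. For $(u_1,v) \in \FF_1 \times \MM$, one pre-glues the negative puncture of $u_1$ into the positive puncture of $v$ with gluing parameter $\rho \in (0,\rho_0)$; the matching of the upper and lower branches $V^{\pm}$ on the two sides is prescribed by the gauge-fixing. The linearized Floer operator at the pre-glued map is surjective with a uniformly bounded right inverse, because $\FF_1$ and $\MM$ are transversely cut out by the earlier lemmas, and the implicit function theorem then corrects the pre-glued map to a genuine solution of \eqref{eq:Floer}. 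Standard decay estimates show that the resulting gluing map $\FF_1 \times \MM \times [0,\rho_0) \to \overline{\FF}$ is $C^1$ in $\rho$, which combined with the compactness statement identifies $\FF_1 \times \MM$ canonically with the Gromov--Floer boundary.

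The main obstacle I anticipate is the gluing analysis itself, in particular ensuring that the matching of the branches $V^{\pm}$ at the node is performed consistently and that the resulting gluing map is genuinely $C^1$ (and not merely $C^0$) in the gluing parameter near $\rho = 0$. These are precisely the issues addressed by the gauge-fixing construction in \cite[Section A.2]{EkholmSmith2}, which I would invoke rather than redo.
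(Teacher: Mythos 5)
Your proposal is correct and takes essentially the same approach as the paper: the paper's own proof is simply a citation to \cite[Lemma~3.6]{EkholmSmith2} together with the observation that $\FF_j = \varnothing$ for $j>1$ (using $n>3$) rules out other broken configurations. Your sketch accurately reconstructs the content of that cited proof -- Gromov--Floer compactness, exactness ruling out sphere and unpunctured disk bubbles, the dimension count excluding $j\geq 2$ and multi-level breaking, and the gauge-fixed gluing analysis of \cite[Section~A]{EkholmSmith2} producing the $C^1$ collar -- so there is no gap, just a self-contained expansion of what the paper defers to the reference.
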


\begin{proof} 
This is proved in \cite[Lemma 3.6]{EkholmSmith2}. Note that	other broken configurations are ruled out because, for generic data, $\FF_{j}=\varnothing$ for $j>1$ (this relies on our standing assumption $n>3$).
\end{proof}

 \cite[Theorem 5.21]{EkholmSmith2} constructs a Floer gluing map on $\NN\times [\rho_0,\infty)$ which, when composed with a map reparameterizing the domain, gives a smooth embedding   
\begin{equation} \label{eqn:gluing}
\Psi\colon \NN\times [\rho_0,\infty) \to \FF
\end{equation}
whose image parameterizes a neighborhood of the Gromov-Floer boundary of $\FF$.  (The effect of the re-parameterization, discussed further below, is that the holomorphic disk part of the glued map, coming from $\MM$, lies in a small half-disk around the puncture of the domain of the map from $\FF_1$.) 
It follows that
\[ 
\overline{\FF}=\FF\setminus \Psi(\NN\times (\rho_0,\infty))
\] 
is a smooth compact submanifold of $\FF$ with boundary 
\[  
\partial\overline{\FF}  \approx_{C^{1}}  
S^n \cup \NN,
\]
where the diffeomorphism on the first component is given by inclusion of constant maps (with a lift of the map to the domain of the immersion $\phi$), and the diffeomorphism on $\NN$ is given by the reparameterized gluing map $\Psi$.

Recall that $\FF_{1}$ is a closed $1$-manifold. As in \cite[Section 4]{EkholmSmith2}
consider next the abstract filling
\[ 
\TT= \DD\times \MM
\]
of $\NN$, where $\DD$ is a collection of disks filling $\FF_{1}$.  We let $\BB = \overline{\FF} \cup_{\NN} \TT$, which is a $C^1$-smooth compact $(n+1)$-dimensional manifold with boundary, whose unique boundary component is canonically diffeomorphic to $S^n$.

\begin{Proposition}\label{Prop:parallel} 
If $\phi$ is tangentially standard then $\BB$ is parallelizable.  
\end{Proposition}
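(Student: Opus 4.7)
The plan is to follow the strategy of \cite{EkholmSmith2}, exhibiting a stable framing of $T\BB$ by producing compatible stable framings on the two pieces $\overline{\FF}$ and $\TT=\DD\times\MM$ that agree along the gluing region $\NN$, and then upgrading the stable framing to a genuine framing by dimension reasons.

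For $\overline{\FF}$: at a transversely cut out point $u\in\FF^{r}$ the tangent space is $\ker D_{u}$, where $D_{u}$ is the linearized Floer operator; combined with the $r$-direction this identifies $T\overline{\FF}$ stably with the index bundle of the family of linearized Floer operators over $\overline{\FF}$. By Riemann-Roch and standard arguments for Cauchy-Riemann operators on disks with Lagrangian boundary conditions, this index bundle depends (stably, up to homotopy) only on the boundary data $\ev_{\partial D}^{\ast}T\phi(S^{n})$. Since each boundary loop lifts continuously to $S^{n}$, the classifying datum is pulled back from the stable tangential Gauss map $G_{\phi}\colon S^{n}\to U/O$. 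Because $\phi$ is tangentially standard, $G_{\phi}\simeq G_{w}$, and the $O(n+1)$-symmetry of the Whitney immersion $w(x,y_{1})=x(1+iy_{1})$ together with the standard complex structure yields an explicit stable trivialization of the corresponding index bundle. Pulling back gives a stable framing of $T\overline{\FF}$.

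For $\TT$: the splitting $T\TT = T\DD\oplus T\MM$ reduces the question to framings of the two factors. $\DD$ is a disjoint union of $2$-disks, hence parallelizable. For $\MM$, an index-bundle argument analogous to the one above, applied to the linearized $\cdbar$-operator for once-punctured disks with boundary on $\phi(S^{n})$ and positive puncture at the double point, identifies $T\MM$ stably with a bundle classified, again, by $G_{\phi}$; tangential standardness therefore provides a stable framing of $T\MM$. The Floer gluing map $\Psi$ in \eqref{eqn:gluing} is built from a linear pre-gluing construction that preserves the index-bundle identifications canonically, so the two stable framings can be arranged to agree on $\NN=\FF_{1}\times\MM$, exactly as in \cite[Section 4]{EkholmSmith2}. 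Patching yields a stable framing of $T\BB$.

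Finally, to pass from stable to honest framing: $\BB$ is a compact $(n+1)$-manifold with non-empty boundary $S^{n}$, so it deformation retracts onto an at most $n$-dimensional CW complex. Over such a complex a stable framing of the rank-$(n+1)$ bundle $T\BB$ lifts to an honest framing by elementary obstruction theory, the relevant obstructions lying in cohomology groups that vanish for dimensional reasons. The hard part of the argument is the precise identification of the two index bundles with pullbacks of bundles classified by $G_{\phi}$, and the careful matching of the two framings across the gluing region $\NN$; both are essentially the content of \cite[Section 4]{EkholmSmith2} and adapt with only cosmetic changes to the present setting.
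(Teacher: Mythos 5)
Your proof is correct and takes essentially the same approach as the paper: reduce to stable parallelizability using that $\BB$ is a compact manifold with non-empty boundary (so its $(n+1)$-plane tangent bundle is determined over an $n$-complex, putting it in the stable range), and then establish the stable trivialization via an index-bundle argument governed by the tangential Gauss map. The paper simply cites \cite[Theorem 1.1]{EkholmSmith2} for the stable parallelizability, whereas you have unpacked the outline of that cited result.
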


\begin{proof} Since $\BB$ is a manifold with boundary, it suffices to prove that it is stably parallelizable. This is \cite[Theorem 1.1]{EkholmSmith2}.  \end{proof}

With no assumption on the stable Gauss map of $\phi$ we have the following weaker result.  

\begin{Proposition}\label{Prop:spin}
	If $n>3$, the manifold $\BB$ is spin.	
\end{Proposition}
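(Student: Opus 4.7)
My plan is to verify $w_1(T\BB) = 0$ (orientability) and $w_2(T\BB) = 0$ by realizing the stable tangent bundle of $\BB$ as a pullback of a bundle on $S^n$ under an evaluation map. The driving observation is that for $n > 3$, $H^1(S^n;\Z/2) = H^2(S^n;\Z/2) = 0$, so any bundle pulled back from $S^n$ has vanishing $w_1$ and $w_2$. This weakens the stable parallelizability of Proposition \ref{Prop:parallel} (which required tangential standardness of $\phi$ to produce a null-homotopy of the relevant Gauss data) to the spin condition, which holds unconditionally in this dimension range.

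I would first dispatch orientability using standard Floer orientation theory with boundary on a spin Lagrangian: $\phi(S^n)$ is spin for $n \geq 2$, so $\barFF$, $\MM$, and hence $\TT = \DD \times \MM$, are canonically oriented, and the gluing map $\Psi$ of \eqref{eqn:gluing} respects these orientations, assembling a coherent orientation on $\BB$. For $w_2$, I would build a continuous evaluation map $\ev \colon \BB \to S^n$: on $\barFF$, send $u \mapsto u(\zeta_0)$ for a fixed marked boundary point $\zeta_0 \in \pa D$, lifted through $\phi$ near the double point so as to land in $S^n$, extending the identity on $\FF^0 = S^n$; on $\TT = \DD \times \MM$, evaluate the $\MM$-factor at a boundary point away from the puncture. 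Compatibility across $\NN$ requires $\zeta_0$ to sit in the ``long Floer part'' of the glued domain, which can be arranged via the reparameterization built into $\Psi$.

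Then, adapting the index-bundle analysis of \cite[\S 4--5]{EkholmSmith2}, I would show that the stable tangent bundle of $\BB$ is isomorphic, up to trivial summands, to $\ev^*\xi$ for a bundle $\xi$ over $S^n$ determined by the Lagrangian Gauss map $G_\phi$. This is the core of the stable framing argument in \cite[Theorem 1.1]{EkholmSmith2}; without the tangential standardness hypothesis one loses the triviality of $\xi$ but retains the factorization through $S^n$. Consequently $w_i(T\BB) = \ev^* w_i(\xi)$ for $i = 1, 2$, and both vanish since $H^i(S^n;\Z/2) = 0$. The main obstacle I anticipate is ensuring the continuity of $\ev$ across $\NN$ while keeping the stable identifications of the index bundle on $\barFF$ and on $\TT$ compatible at the gluing; this is the technical heart of the argument and would rely on the bookkeeping of marked points through Floer gluing in \cite[Section 5]{EkholmSmith2}.
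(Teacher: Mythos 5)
Your overall idea — that $w_1$ and $w_2$ of $T\BB$ vanish because the tangent bundle is pulled back from a highly connected space, so no tangential standardness hypothesis is needed for the spin condition — is the right one and matches the spirit of the paper. But the specific route you propose diverges from the paper's and has two concrete problems.

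First, the stable tangent bundle $T\overline{\FF}$ is \emph{not} pulled back from $S^n$ via an evaluation map. The index bundle of the linearized Cauchy--Riemann operator depends on the entire boundary loop $u|_{\partial D}$ lifted to $S^n$, not on the value of $u$ at a single marked point $\zeta_0$; the classifying map factors through (a space fibered over) the free loop space $\Lambda S^n$, not through $S^n$. The paper states this explicitly. The argument still closes because $\Lambda S^n$ is $(n-2)$-connected, hence $2$-connected for $n>3$, so $H^1 = H^2 = 0$ and any bundle pulled back from it is spin. Note that this handles $w_1$ and $w_2$ simultaneously; the separate appeal to Floer orientation theory for orientability is unnecessary.

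Second, and more seriously, the plan to extend a single evaluation map across $\NN$ and exhibit $T\BB$ as one global pullback does not work, for exactly the reason you flag but do not resolve. The index bundle on $\overline{\FF}$ is governed by closed boundary loops, while on $\TT = \DD \times \MM$ the $\MM$-factor carries an index bundle governed by once-punctured boundary conditions, i.e.\ paths in $S^n$ between the two preimages of the double point (a based path space, homotopy equivalent to $\Omega S^n$). These are pullbacks from different spaces, and there is no single $\ev\colon \BB \to S^n$ (or $\Lambda S^n$) realizing $T\BB$ as a global pullback. The paper avoids this entirely: it first concludes $\overline{\FF}$ is spin as above, and then extends the spin structure over each filling piece $D\times\MM$ by a spin-cobordism argument on the circle components $S\subset\FF_1$. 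The subtle point there — which your proposal misses — is that $S\times\{m\}$ may fail to bound in $\overline{\FF}$, so the induced spin structure may be the non-bounding one; the paper then alters the spin structure on $\overline{\FF}$ by the class in $H^1(\overline{\FF};\Z/2\Z)$ dual to $S\times\{m\}$ before extending. Without that correction step, the extension over $\TT$ can fail.
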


\begin{proof}
The tangent bundle $T\overline{\FF}$ of $\overline{\FF}$ is the restriction of the index bundle for the linearized Cauchy-Riemann equation. This index bundle is pulled back from the space of linearized boundary conditions over the free loop space of the sphere $S^{n}$. The loop space is $(n-2)$-connected,  hence $T\overline{\FF}$ is trivial over the $2$-skeleton since $n>3$, which means $\overline{\FF}$ is spin. 

The manifold $\BB$ is constructed from $\overline{\FF}$ by gluing $\DD\times\MM$ along $\FF_1\times\MM$. Consider a circle component $S\subset\FF_1$ and the corresponding filling $D\times \MM$, and let $m \in \MM$. The spin structure on $\FF$ admits an extension over $D\times\MM$ provided that the induced spin structure on $S\times\{m\}$ is the bounding spin structure (i.e.~ the spin nullcobordant spin structure, which in particular extends over a disk). Now, either $S\times \{m\}$ bounds in $\overline{\FF}$, in which case the induced spin structure bounds, or it does not. In the latter case, we alter the spin structure on $\overline{\FF}$ by the element in $H^{1}(\overline{\FF};\Z/2\Z)$ corresponding to $S\times\{m\}$, and then it does bound. The result follows.  	
\end{proof}

\begin{Remark} \label{Rem:spin_bound}
Arguing as in the proof of Proposition \ref{Prop:spin} we find that if $\gamma \subset \BB$ is a loop, we can choose a spin structure on $\BB$ that induces the nullcobordant framing on $\gamma$. 
Similarly, in the setting of Proposition \ref{Prop:parallel}, we can choose a stable trivialization so that the induced framing on $\gamma$ is nullcobordant. Indeed, the framing of $\BB$ is obtained from choices of stable framings of index bundles over configuration spaces containing $\FF$, $\FF_1$ and $\MM$, and in the construction one sees that one can choose the stable framing on the $1$-skeleton arbitrarily subject to the fact that it bounds on $1$-cells lying in $\FF_1$. Compare to \cite[Section 3.7]{EkholmSmith} and  \cite[Lemma 4.9]{EkholmSmith2}.
\end{Remark}

\subsection{Extending the evaluation map} From the  Lagrangian immersion $\phi\colon S^n \rightarrow \R^{2n}_{\st}$ we have constructed a bounding manifold $\BB$, $\partial \BB = S^n$. The subspace $\overline{\FF}\subset \BB$ is a space of Floer holomorphic disks with boundary that lift via $\phi$ to $S^n$, and hence there is a canonical evaluation map $\ev_1\colon \overline{\FF} \rightarrow S^n$.  This evaluation map cannot extend to $\BB$ as a map to $S^n$, because the map has degree $1$ on the boundary $\partial \BB$. However, it can be extended over the filling $\TT$ as a map to $\R^{2n}$. We  prove our results on homotopy classes using a particular such extension coming from the holomorphic disk components of broken solutions.

Consider first the 1-manifold $\FF_{1}$. Let $\pa D$ denote the boundary of the source disk $D$ of the maps in $\FF_{1}$. There is a smooth map
\[ 
\zeta\colon \FF_{1}\to \pa D
\]
taking a solution $u\in\FF_{1}$ to the coordinate of its negative boundary puncture. After a small rotation of the domain we may assume that $\zeta$ is transverse to $1\in\pa D$. Fix a small closed interval $I\subset \pa D$ centered at $1$, and let
\[ 
\zeta^{-1}(I)=I_1\cup\dots\cup I_m,
\] 
where $I_j$ are the components of the preimage and where $I$ is sufficiently small that the restriction of $\zeta$ to each $I_j$ is a diffeomorphism onto $I$. We subdivide $\NN=\FF_{1}\times\MM$ into $m+1$ pieces:
\[ 
\NN = \bigcup_{j=1}^{m} \left( I_j\times \MM\right) \ \cup \ \left ( U\times \MM \right),
\] 
where $U=\FF_{1}\setminus \bigcup_{j=1}^{m}I_j$. Recall $\FF_1 = \partial \DD$ for an abstract collection of 2-disks $\DD$. Let $\DD_{j}\subset \DD$ denote a small half disk neighborhood of the center of $I_j$ that intersects the boundary in $I_j$.  Recall the neighborhoods $V^{\pm}$ of the preimages of the double point introduced in Section \ref{sec:basic}.

\begin{Lemma}\label{l:safeeval}
	Let $\sigma_j \in \{\pm\}$ be the sign of the derivative $d\zeta$ in $I_j$ and let $\pa_{+} I_j$ and $\pa_{-}I_j$ denote the positive and negative endpoints of $I_j$. For $I$ small enough
	\[ 
	\ev_1(\pa_{-}I_j)\subset V^{\sigma_j}\quad \text{ and }\quad
	\ev_1(\pa_{+}I_j)\subset V^{-\sigma_j}. 
	\]  
\end{Lemma}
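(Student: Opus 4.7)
The plan is a compactness and uniform continuity argument, together with careful sign bookkeeping at the endpoints of $I_j$.

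First, I would establish a uniform version of the boundary-sheet splitting that defines $\FF_1$. For every $u \in \FF_1$ the strip-like asymptotic at the negative boundary puncture $\zeta(u)$ guarantees an arc of $\pa D$ about $\zeta(u)$ whose component in the negative direction is sent into $V^+$ and whose component in the positive direction is sent into $V^-$. Since $\FF_1$ is compact (a closed $1$-manifold) and the parametrized family of maps depends continuously on $u$ in $C^0$ on compact subsets of the source away from the puncture, there exists a single $\delta > 0$ for which the sub-arc of length $\delta$ on either side of $\zeta(u)$ enjoys this property uniformly in $u$.

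Next, I would shrink the interval $I \subset \pa D$ so that $|I|<\delta$. Then for any $u\in\zeta^{-1}(I)$ both the fixed basepoint $1$ and the puncture $\zeta(u)$ lie in $I$, so $|1-\zeta(u)|<\delta$ and the point $1$ lies inside the uniform asymptotic arc around $\zeta(u)$. Consequently $u(1)\in V^+$ whenever $1$ lies on the negative side of $\zeta(u)$ in $\pa D$, and $u(1)\in V^-$ whenever $1$ lies on the positive side.

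Finally, I would perform the sign bookkeeping. Since $\zeta|_{I_j}\colon I_j\to I$ is a diffeomorphism whose orientation is measured by $\sigma_j$, the endpoints $\pa_\pm I_j$ map to the two endpoints of $I$ in an order dictated by $\sigma_j$, and hence $1$ lies on a specific side of $\zeta(u)$ at each endpoint $\pa_\pm I_j$. Tabulating the four cases $(\sigma_j,\pa_\pm I_j)\in\{\pm\}^{2}$ and feeding them into the previous paragraph yields exactly the claimed inclusions $\ev_1(\pa_- I_j)\subset V^{\sigma_j}$ and $\ev_1(\pa_+ I_j)\subset V^{-\sigma_j}$. The main technical obstacle is the uniformity claim in the first step: it must be checked that the rate at which the two boundary arcs near the puncture enter $V^{\pm}$ does not degenerate along a sequence in $\FF_1$. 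I expect this to follow from the standard exponential convergence of Floer strips at a non-degenerate boundary puncture (here, the transverse double point of $\phi$) combined with the compactness of $\FF_1$ established in Section~\ref{Sec:modulispaces}.
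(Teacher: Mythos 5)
Your proposal takes essentially the same approach as the paper's proof, which (rather tersely) invokes the asymptotic behavior at the negative boundary puncture: points just before the puncture map into $V^+$, points just after into $V^-$, and the signs then follow from the orientation of $\zeta|_{I_j}$. What you add is the uniformity step via compactness of $\FF_1$ and exponential convergence at the puncture, which the paper leaves implicit; that is a reasonable fleshing-out and not a genuinely different argument.
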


\begin{proof}
	This holds since, with respect to the orientation on $\pa D$ induced by the complex orientation on $D$, points in $\pa D$ just before the puncture map to $V^{+}$, whilst points right after the puncture map to $V^{-}$.
\end{proof}

We extend the evaluation map $\ev_1$ from $\overline{\FF}$ to $\BB$ in two stages.  The conditions of the following lemma will arise naturally in the setting of Theorem \ref{Thm:yasha}, but can always be achieved by appropriate scaling and translation. Let $(x,y)=(x_1,y_1,\dots,x_n,y_n)$ be standard coordinates on $\R^{2n}$. Let $L\subset \R^{2n}$ be a Lagrangian disk with Legendrian boundary $\Lambda$, and suppose that $L$ agrees with the cone on $\Lambda$ in the half space $\{x_1\le -\epsilon\}$. (Note that here we are conical in a region lying over the negative, rather than the positive, $x_1$-axis; this differs from the convention at the start of Section \ref{Sec:leg_boundary}, but fits with the construction of the Legendrian double $\Gamma(L)$ in Section \ref{Subsec:half}, which involves translation of the front.) Assume that $\Lambda$ has a single Reeb chord, which has grading $(n-1)$. We construct the Legendrian double $\Gamma(L)$ of $L$, in the sense of Section \ref{Subsec:half}, in such a way that (i) $L^{+}$ lies in $\{x_1\ge-\tau\}$, and agrees with the front of $L$ in that region; (ii) its reflection $L^{-}$ lies in $\{x_1\le -\tau-2\epsilon\}$; and (iii) the cylindrical piece which joins the two  lies in $\{-2\epsilon-\tau \le x_1\le-\tau\}$. Let $\phi\colon S^{n}\to\R^{2n}$ be the Lagrangian immersion obtained by projecting $\Gamma(L)$ along the $z$-axis. Then $\phi$ has one transverse double point, corresponding to the unique Reeb chord of $\Lambda$; the double point has grading $n$ and lies in the slice $\{x_1=-\tau-\epsilon\}$.   

As mentioned briefly above,  the map 
\[  
\Psi\colon \FF_{1}\times\MM\times[\rho_0,\rho_1]\to\overline{\FF}
\]
that gives the collar neighborhood on the boundary is constructed by first pre-gluing the disks in $\FF_1$ and $\MM$, then applying Floer-Picard iteration to obtain an actual solution, and finally composing the result with a reparameterization of the domain so that the holomorphic disk part lies in a small half-disk of size $\Ordo(e^{-\rho})$ close to the boundary puncture of the disk in $\FF_1$. In this construction, we use an explicit gauge-fixing procedure for the holomorphic maps in $\MM$. The (pre-)gluing map in fact takes as input an element in this gauge-fixed space of smooth maps, see \cite[Section A.9]{EkholmSmith2}. 

For $(u,v,\rho)\in \FF_1\times\MM\times[\rho_0,\rho_1]$, let $\Psi'(u,v,\rho)$ be the ``naive" version of  $\Psi(u,v,\rho)$ in which we do not apply Floer-Picard iteration: $\Psi'(u,v,\rho)$ is constructed only by pregluing and reparameterization. The distance between the starting point for Floer-Picard iteration and the actual solution resulting from iteration is estimated in \cite[Equation (5.10)]{EkholmSmith2}. In the current setting this implies that, by taking $\rho_0$ sufficiently large, we can make $\Psi(u,v,\rho)$ and $\Psi'(u,v,\rho)$ arbitrarily $C^{1}$-close for all $(u,v,\rho)\in\FF_1\times\MM\times[\rho_0,\rho_1]$.

\begin{Lemma} \label{Lem:extension_first_part}
For $\phi\colon S^{n}\to\R^{2n}$ a Lagrangian immersion with one double point of Maslov grading $n$,  constructed from a Lagrangian disk $L$ as above, there is an extension of $\ev_1$ over $(\DD\setminus \bigcup_{j=1}^{m}\DD_j) \times \MM$ with image contained in $\phi(S^n) \cup \{x_1 \leq -\tau\}$.
\end{Lemma}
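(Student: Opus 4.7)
My plan is to arrange the extension to factor through the projection to the $\DD$-factor, reducing the problem to extending a map on a $2$-disk into a contractible target. First I would use the gluing approximation from \cite[(5.10)]{EkholmSmith2}: for $u\in U$ the negative puncture $\zeta(u)\notin I$ sits at positive distance from $1\in\partial D$, so the pregluing $\Psi'(u,v,\rho)$ coincides with $u$ near $1\in\partial D$, whence $\Psi'(u,v,\rho)(1)=u(1)$. Because $\Psi$ and $\Psi'$ are $C^1$-close for $\rho$ large, a small homotopy of $\phi\circ\ev_1$ supported in a neighborhood of the collar over $U\times\MM$ arranges that along the hypersurface $\Psi(U\times\MM\times\{\rho_1\})$ the composed map is exactly $(u,v)\mapsto \phi(u(1))\in \phi(S^n)$.

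Next I would observe that the target $\phi(S^n)\cup\{x_1\leq -\tau\}$ is contractible. The cylindrical neck of $\phi(S^n)$ lies in $\{-2\epsilon-\tau\leq x_1\leq -\tau\}$ and $L^{-}\subset\{x_1\leq -\tau-2\epsilon\}$; in particular the double point at $\{x_1=-\tau-\epsilon\}$ is inside the half-space. So the union retracts onto the half-space $\{x_1\leq -\tau\}$ with the embedded disk $L^{+}$ attached along its boundary sphere in $\{x_1=-\tau\}$, which is contractible.

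With these two reductions in hand I would build the extension on $\DD\setminus\bigcup_j\DD_j$ alone, and then pull it back to $(\DD\setminus\bigcup_j\DD_j)\times\MM$ along the projection to the first factor. Each connected component of $\DD\setminus\bigcup_j\DD_j$ is a topological $2$-disk whose boundary consists of arcs of $U$ (carrying the already-defined map $u\mapsto\phi(u(1))\in\phi(S^n)$) together with half-circles replacing each $I_j$. By Lemma \ref{l:safeeval} the endpoints of each $I_j$ map into $V^{\pm}$, both of which lie close to the double point in the cylindrical region inside $\{x_1\leq -\tau\}$. So I extend across each new half-circle using any path in $\{x_1\leq -\tau\}$ joining the two endpoint values, obtaining a map from the boundary of a disk into the contractible target, which then extends over the interior.

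The main obstacle will be maintaining the target containment under the homotopy in the first step: the modified map must still land in $\phi(S^n)\cup\{x_1\leq -\tau\}$, and it must glue smoothly with the extension from the third step along $U\times\MM$. This is manageable by taking $\rho_0$ large so the $C^1$-error between $\Psi$ and $\Psi'$ places the modified map in an arbitrarily small tubular neighborhood of $\phi(S^n)$, after which a normal-projection retraction returns the image to $\phi(S^n)$ and matches the factored form $(u,v)\mapsto\phi(u(1))$ exactly.
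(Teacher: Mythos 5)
Your overall strategy — make the map factor through the $\DD$-factor, handle the new arcs near the double point, then fill the $2$-disks — matches the paper's proof, and the mechanism is sound. However there is one genuine error in your reasoning.

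The claim that $\phi(S^n)\cup\{x_1\le -\tau\}$ is contractible is false. The half-space $H=\{x_1\le -\tau\}$ is indeed contractible, but attaching the $n$-disk $L^+$ to a contractible space along its boundary $(n-1)$-sphere $\partial L^+\hookrightarrow \partial H$ produces a space homotopy equivalent to $S^n$, not a point: the attaching map is nullhomotopic in $H$, so $H\cup L^+\simeq *\cup_{S^{n-1}} D^n \simeq S^n$. (Equivalently, $(H\cup L^+)/H\cong L^+/\partial L^+\cong S^n$, and the quotient map is degree one on $H_n$.) What saves the argument is that $S^n$ is \emph{simply connected} for $n\ge 2$, and simply connectedness is all that is needed to extend a map from the boundary circle of each component of $\DD\setminus\bigcup_j \DD_j$ over its interior. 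The paper makes a more local form of this observation: it identifies $\pi_1(\phi(S^n)\cup B)\cong\Z$, generated by the loop $\gamma\cup e$ (with $\gamma$ chosen in $\{x_1\le -\tau-\epsilon\}$ and $e$ a short arc through the ball $B$ around the double point), and then notes that this generator bounds a $2$-cell in $\{x_1\le -\tau\}$. Your argument reaches the same conclusion but should cite $\pi_1=0$ rather than contractibility.

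A second, smaller point concerns the extension over $\partial\DD_j\setminus I_j$. You say "any path in $\{x_1\le-\tau\}$ joining the two endpoint values." For the present lemma that suffices. But the subsequent Lemma \ref{Lem:extension_second_part} relies on the image of these arcs being approximately constant and confined to a small ball about the double point, so that $\DD_j\times\MM$ can be identified with $D\times\MM$ and filled in by the full evaluation map. The paper therefore uses short straight line segments between $V^{\pm\sigma_j}$ and $V^{\mp\sigma_j}$, not arbitrary paths; if you intend to re-use your construction in the next step, make that specific choice here.
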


\begin{proof}
As indicated at the start of the section, we aim to construct an extension of $\ev_1$ using holomorphic disk components of broken curves (whose evaluation image we will be able to control, in contrast to that of disks in $\FF_1$, say).  At this stage, the key requirement will be to construct the first step of the extension so as to have image in $\phi(S^n) \cup\{x_1 \leq -\tau\}$.

For glued maps $w\colon D\to \R^{2n}$, where $w=u\#v=\Psi(u,v,\rho)$ for $(u,v)\in U \times \MM\subset \FF_{1}\times\MM$, the boundary point $1 \in \partial D$ at which we evaluate when defining $\ev_1$, $\ev_{1}(w)=w(1)$, lies in the part of the domain $D$ of the glued map that comes from its component $u$ in the factor $\FF_{1}$. Since the glued and pre-glued maps are arbitrarily close, this means that $\ev_{1}(w)=w(1)$ is arbitrarily close to $u(1)$, see Figure \ref{fig:evpoint}. 

Consequently, by Lemma \ref{l:safeeval}, if $w^{\pm}=u^{\pm}\# v$ for $(u^{\pm},v)\in \partial_{\pm} I_{j}\times\MM$, we can connect the two points $\ev_{1}(w^{\pm})\in V^{\pm\sigma}$ to $\ev_{1}(w^{\mp})\in V^{\mp\sigma}$ by a short straight line segment in $\R^{2n}$ near the double point. This gives an extension of the evaluation map to $\partial(\DD\setminus \bigcup_{j=1}^{m}\DD_j)\times\MM$, where we map $\pa(\DD_{j}\setminus I_{j})\times\MM$ to the line segments above. 

The union of $\phi(S^{n})$ and a small ball $B$ in $\R^{2n}$ around its double point is homotopy equivalent to the space obtained from $S^{n}$ by attaching a $1$-cell $e$ with one endpoint in $V^{+}$ and the other in $V^{-}$. The fundamental group of this space is generated by a loop $\gamma\cup e$, where $\gamma$ is a path in $S^{n}$ connecting the endpoints of $e$. We take $\gamma$ to lie in the part of $S^{n}$ mapped by $\phi$ to $\{x_1\le -\tau-\epsilon\}$ and think of $e$ as a short path in $B$. Then it is clear that $\{x_1\le -\tau\}$ contains a 2-cell bounding $\gamma\cup e$, which means that we can find the desired extension. To see this note that $\ev_{1}$ of the preglued map corresponding to $u\# v\in \partial(\DD\setminus \bigcup_{j=1}^{m}\DD_j) \times \MM$  is independent of $v$. We then first homotope the image of $\ev$, in $\phi(S^{n})\cup B$, into $\gamma\cup e$ and then extend over the two cell $(\DD\setminus \bigcup_{j=1}^{m}\DD_j) \times \MM$. The actual glued map is arbitrarily close to the preglued one, and existence of the desired extension follows. 
\end{proof}

Assume that an extension $\widetilde{\ev}_{1}$ over $(\DD\setminus \bigcup_{j=1}^{m}\DD_j)\times \MM$ as constructed in Lemma \ref{Lem:extension_first_part} has been fixed. Note that for this extension $\widetilde{\ev}_{1}(\partial (\DD\setminus \bigcup_{j=1}^{m}\DD_j)\times \MM)$ maps into a small neighborhood of the double point. We use the holomorphic disks which are parametrized by $\MM$ to extend the map to the remainder of the filling $\TT=\DD\times\MM$.   

\begin{Lemma} \label{Lem:extension_second_part}
For sufficiently large $\rho_0$, there exists a further extension of $\ev_1$ to all of $\DD\times\MM$ such that the image of each subspace $\DD_{j}\times \MM$ lies in an arbitrarily small neighborhood of the total evaluation map
	\[ 
	\ev\colon D \times \MM \longrightarrow \R^{2n}, \quad (z,u) \mapsto u(z),
	\] 
where $D$ denotes the source disk for maps in  (the gauge-fixed model of) $\MM$.
\end{Lemma}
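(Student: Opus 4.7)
The plan is to construct the extension over $\DD_j\times\MM$ so that it factors through the total evaluation map $\ev\colon D\times\MM\to\R^{2n}$. Specifically, I will build a continuous map $\alpha_j\colon\DD_j\times\MM\to D$ with appropriate boundary behavior, and define the extension by $(x,v)\mapsto v(\alpha_j(x,v))$; the image condition of the lemma is then automatic since the image lies in $\ev(D\times\MM)$.

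The first task is to observe that all prescribed boundary data on $\partial(\DD_j\times\MM)$ lies, for $\rho_0$ sufficiently large, in an arbitrarily small neighborhood of the double point. On the inner arc of $\partial\DD_j$ this is built into the construction of Lemma \ref{Lem:extension_first_part} (short line segments near the double point). On $I_j\times\MM$, the evaluation $(u,v)\mapsto\Psi(u,v,\rho_1)(1)$ is, by the pre-gluing description of $\Psi$ combined with the gluing estimate \cite[Eq.~(5.10)]{EkholmSmith2}, $C^0$-close to $v(\beta_{\rho_1}(1))$ when the point $1\in\partial D$ lies in the rescaled half-disk $B_{\rho_1}$ around $\zeta(u)$ (where $\beta_{\rho_1}\colon B_{\rho_1}\to D$ is the conformal reparameterization placing the $v$-part), and $C^0$-close to $u(1)\in\phi(S^n)$ otherwise; both values are near the double point because $\zeta(u)\in I$ is close to $1$.

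Next, I produce a continuous boundary map $\alpha_j^{\partial}\colon\partial\DD_j\times\MM\to D$. On $I_j\times\MM$, set $\alpha_j^{\partial}(u,v)=\beta_{\rho_1}(1)$ (independent of $v$) when $1\in B_{\rho_1}$; and, when $1\notin B_{\rho_1}$, solve $v(\alpha_j^{\partial}(u,v))=u(1)$ for $\alpha_j^{\partial}(u,v)\in\partial D$ near $q$. The second solvability uses the standard asymptotic expansion of a finite-energy holomorphic disk $v\in\MM$ at its boundary puncture $q$, which identifies $v|_{\partial D}$ near $q$ with a standard model of the limit Reeb chord depending continuously on $v$ via the gauge-fixing of $\MM$. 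The two regimes match continuously across the transition locus $|1-\zeta(u)|=e^{-\rho_1}$ because both are governed by the same Reeb chord at the double point. On the inner arc of $\partial\DD_j$ set $\alpha_j^{\partial}\equiv q$, and absorb the small $C^0$-difference between $v(q)=\text{double point}$ and the given line segment data by a homotopy supported on a collar of the arc, with image in an arbitrarily small neighborhood of $\ev(D\times\MM)$.

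Finally, since $D$ is contractible and $\DD_j$ is a $2$-disk, I extend $\alpha_j^{\partial}$ to a continuous $\alpha_j\colon\DD_j\times\MM\to D$. The composition $(x,v)\mapsto v(\alpha_j(x,v))$ is the required extension, with image automatically contained in $\ev(D\times\MM)$. The main obstacle is the continuous matching of the two regimes of $\alpha_j^{\partial}$ on $I_j\times\MM$ across the transition locus; this depends on the compatibility of the gluing reparameterization with the leading-order asymptotic expansion of $v$ at its puncture, both of which are controlled by the single Reeb chord at the double point and made uniform in $v$ by the gauge-fixing prescribed in the construction of $\MM$.
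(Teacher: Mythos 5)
Your proposal is essentially the same argument the paper gives: both exploit the fact that, as the puncture $\zeta(u)$ sweeps the arc $I_j$, the evaluation point $1$ effectively sweeps the boundary of the $v$-domain, so that the natural filling of the hole in the evaluation map is given by the full-domain evaluation of $v$. The paper phrases this as an identification $\DD_j\times\MM\cong D\times\MM$ (after contracting a small ball around the double point and the maps into it to constants), whereas you construct an explicit parametrization $\alpha_j\colon\DD_j\times\MM\to D$ and compose with evaluation; the only substantive caveat is that equations like $v(\alpha_j^{\partial}(u,v))=u(1)$ can only be solved approximately (since $u(1)$ need not lie on the curve $v|_{\partial D}$), but this is absorbed by the same collar homotopy you already invoke for the inner arc, and the ``arbitrarily small neighborhood'' formulation in the lemma is exactly what both arguments rely on at this point.
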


\begin{proof}
	Fix $I_j$. As the location of the glued-in holomorphic disk sweeps $I_j$, the location of the boundary point $1$ on the glued domain sweeps the boundary of the once punctured source disk of maps in $\MM$. More formally, a glued disk $w=u\# v$ is close to breaking, which means that the restriction of $w$ to a small half disk around the point in $\pa D$ where $u$ has a negative puncture is arbitrarily close to $v$ for sufficiently large gluing parameter. This means that as the puncture of $u$ moves through the interval $I_j$ around $1$, the evaluation map $\ev_{1}(u\# v)$ is arbitrarily close to the evaluation map $v|_{\partial D}$, see Figure \ref{fig:evpoint}.
	   
	As mentioned above, by construction of the map over $\widetilde{\ev}_{1}$, its image $\widetilde {\ev}_{1}(\partial (\DD\setminus \bigcup_{j=1}^{m}\DD_j)\times \MM)$ is approximately constant and lies in a small ball around the double point of $\phi$. Contracting this ball to the double point (and the maps into it to constants), we may identify the products
\[
\DD_j \times \MM \quad \textrm{and}  \quad D \times \MM,
\]
	in such a way that the map $\ev_1|_{I_{j}\times\MM}$ corresponds to the evaluation along the boundary $\pa D$. We then naturally extend the map over $\DD_j$ as the evaluation map over $D$ under this identification. The lemma follows.
\end{proof}

\begin{figure}[ht]
	\centering
	\includegraphics[width=.6\linewidth]{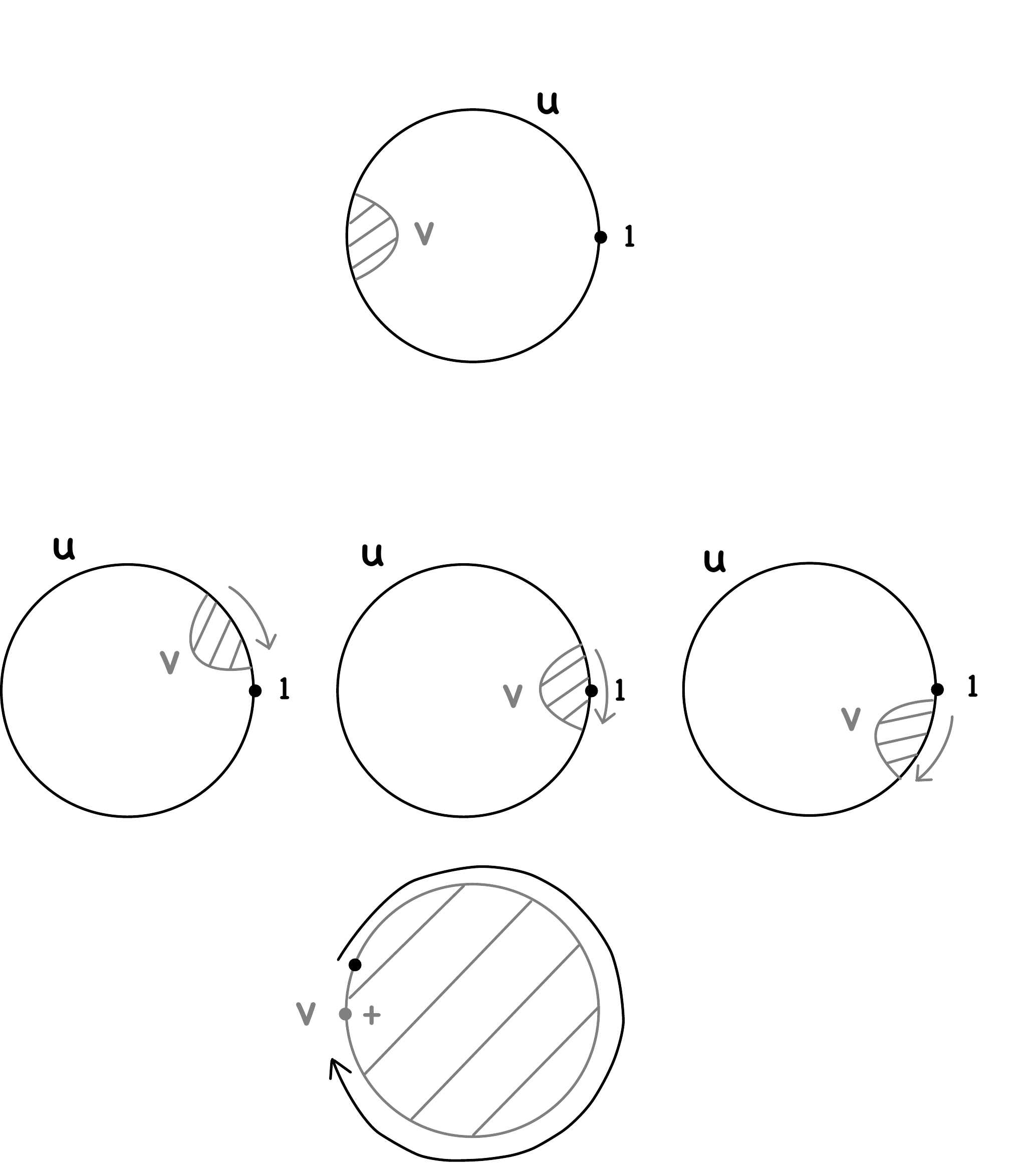}
	\caption{Top: the evaluation point $1$ is far from the glued in holomorphic disk $v$, as in Lemma \ref{Lem:extension_first_part}. Middle: the glued in holomorphic disk passes the evaluation point $1$, as in Lemma \ref{Lem:extension_second_part}. Bottom: the passage as viewed from the holomorphic disk, where the evaluation point sweeps the whole boundary. }
	\label{fig:evpoint}
\end{figure}

\section{Homotopy rigidity for Lagrangian disks}
In this section we prove Theorem \ref{Thm:yasha} and Corollary \ref{Cor:yasha} in two steps. We first compactify the Lagrangian disk $L$ with Legendrian boundary $\Lambda$ to an immersed Lagrangian sphere, the Lagrangian projection of $\Gamma(L)$, with double points in natural 1-1 correspondence with Reeb chords of $\Lambda$. Then we apply the results of Section \ref{Sec:modulispaces} for a specific almost complex structure on $\R^{2n}_{\st}$. 

\subsection{Compactification}\label{sec:comp}
Let $(x_1,y_1,\dots,x_n,y_n)$ be coordinates on $T^{\ast}\R^{n}$ and let $x\in\R^{n}$. Let $L$ be a Lagrangian disk agreeing with the fiber $T^*_{x}\R^{n}$ outside a compact set. We will first show how to change $L$ in a neighborhood of infinity to obtain a  Lagrangian disk with Legendrian boundary in the sense of Section \ref{Sec:leg_boundary}. The Legendrian boundary will in fact be the standard Legendrian unknot. 

To this end, we consider a Lagrangian disk $D\subset\R^{2n} = T^*\R^n$ with Legendrian boundary the standard Legendrian unknot, and such that $D$ intersects the zero-section in one point and agrees with the fiber at that point in a neighborhood of this intersection point.  In agreement with the conventions used after Lemma \ref{l:safeeval}, we assume that $D$ is a cone over a Legendrian unknot over the \emph{negative} $x_1$-axis.  Consider scaling the fiber coordinates in $T^{\ast}\R^{n}$ by $\lambda>0$. By choosing $\lambda$ sufficiently large we make the intersection of $D$ with an arbitrarily large ball agree with the fiber at the intersection point. Removing the intersection with the ball and inserting the corresponding part of $L$, we get the desired Lagrangian disk with Legendrian boundary equal to the standard unknot.   This disk agrees with $L$ in a ball, and is a cylinder over a Legendrian unknot in the region $\{x_1 \leq -\tau\}$ for sufficiently large $\tau>0$.

Let $S_k \subset \bR^n \setminus \{x\}$  be a point if $k=0$, or a $k$-sphere in $\R^{n}$ if $1\le k< n-1$, as in Section \ref{Sec:intro}. By a small homotopy, it will be convenient to normalize choices as follows (cf. the discussion after Lemma \ref{l:safeeval}). We take $x=0$ to be the origin, and we assume that $S_k \subset \{x_1=\varepsilon\}$ lies in an affine hyperplane (so in particular $x=0$ does not lie in the convex hull of $S_k$), for some small $\varepsilon > 0$. We have a Lagrangian disk $L$ that agrees with the fiber $T_0^*\R^n$ over $0$ outside a compact set, and which is disjoint from $T^{\ast}_{S_{k}}\R^{n}$. We alter $L$ as above, inserting a cone on the unknot over the negative $x_1$-axis, to obtain a Lagrangian disk with Legendrian boundary in the sense of Section \ref{Sec:leg_boundary}. 

%

Consider now the Lagrangian projection of the Legendrian sphere $\Gamma(L)\subset \R^{2n}\times\R$, as constructed in Section \ref{Subsec:half}. If we view the standard Legendrian unknot as living in the contact hypersurface $\{x_{1}= -\tau+\tau_{0}\}$, then $\Gamma(L)$ agrees with the cone over  that unknot in the region $\{-\tau\le x_1\le -\tau+\tau_0\}$, for some large $\tau>\tau_{0}>0$. Furthermore, since $L$ has Legendrian boundary the standard unknot, $\Gamma(L)$ has exactly one Reeb chord, which has grading $n$. The corresponding double point of its projection lies in $\{x_1= -\tau-\epsilon\}$, where we form $\Gamma(L)$ by the doubling construction centered at $\{x_{1}=-\tau-\epsilon\}$. Denote the resulting Lagrangian sphere immersion with one double point  by $\phi\colon S^{n}\to\R^{2n}$.

\begin{Lemma}\label{Lem:hmtpywhitney}
The map $\pr \circ \phi$ defines a map $S^n \to S^n \setminus S_k$ whose homotopy class agrees with the class $[f_L]$.
\end{Lemma}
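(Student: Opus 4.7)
The plan is to exploit the explicit structure of the doubling construction from Section \ref{Subsec:half} to split $\pr\circ\phi$ into one piece coming directly from $L$ and a second piece that lands in a contractible subset of $\R^n\setminus S_k$, and then to collapse the second piece.

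First I would parametrize the source sphere of $\phi$ as $S^n = D^+\cup_{S^{n-1}} D^-$, with $\phi(D^+) = L^+$, $\phi(D^-) = L^-$, and $\phi(S^{n-1})$ equal to the cylindrical interpolation piece that contains the unique double point. By the construction recalled just above the lemma, $L^+$ lies in $\{x_1 \geq -\tau\}$ and agrees there with $L$, while $L^-$ (the reflection of $L^+$ across $\{x_1 = -\tau-\epsilon\}$) and the cylinder both lie in $\{x_1 \leq -\tau\}$. Since $S_k \subset \{x_1 = \varepsilon\}$ with $0 < \varepsilon \ll \tau$, the slab $H = \{x_1 \leq -\tau\}$ is disjoint from $S_k$ and is contractible inside $\R^n \setminus S_k$. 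Combined with the hypothesis $L \cap T^{\ast}_{S_k}\R^n = \varnothing$, which forces $\pr(L^+)\cap S_k = \varnothing$, this verifies that $\pr\circ\phi$ takes values in $S^n\setminus S_k$.

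Next, I would contract the image $\pr\circ\phi(D^- \cup S^{n-1}) \subset H$ to a single basepoint in $H$ via a homotopy supported in $S^n\setminus S_k$. Such a contraction exists because $H$ is contractible, and it can be performed rel $D^+$. The resulting map factors as
\[
S^n \;\longrightarrow\; S^n/(D^-\cup S^{n-1}) \;\cong\; D^+/\partial D^+ \;\cong\; S^n \;\longrightarrow\; S^n\setminus S_k,
\]
where the final arrow is $\pr\circ\phi|_{D^+}$ with $\partial D^+$ sent to the chosen basepoint. Since $\phi|_{D^+} = L^+$ coincides with $L$ in $\{x_1 \geq -\tau\}$ and sends $\partial D^+$ into the hyperplane $\{x_1 = -\tau\} \subset H$, this is precisely the representative $f_{L,r}$ from Section \ref{Sec:leg_boundary} for any sufficiently large truncation radius $r$, after composing with the standard deformation retraction $S^n\setminus S_k \simeq S^{n-k-1}$. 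Thus $[\pr\circ\phi] = [f_L]$ in $\pi_n(S^n\setminus S_k)$.

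The main thing to be careful about is that the two a priori different models of $S^n$ used on the two sides of the asserted equality---the quotient $D/\partial D$ used to define $f_L$ in Section \ref{Sec:leg_boundary}, and the doubled sphere $D^+\cup_{S^{n-1}} D^-$ underlying $\phi$---are identified by the above collapse without an orientation flip, so that the two classes agree on the nose rather than differ by a sign. This is a direct verification using the doubling construction and the basepoint conventions chosen in Section \ref{sec:comp}.
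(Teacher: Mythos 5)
Your proof is correct and follows essentially the same argument as the paper's (which is stated more tersely): the paper likewise observes that $\phi$ restricted to $\Gamma(L)\cap\{x_1\le-\tau\}$ lands in a region disjoint from and contractible in the complement of $S_k$, so only the $L^+$ (unmodified) part of the doubled sphere contributes to the homotopy class, and hence one recovers $[f_L]$ rather than, say, twice that class. Your decomposition of the source sphere as $D^+\cup_{S^{n-1}} D^-$ and the collapse onto $D^+/\partial D^+$ makes the identification with the representative $f_{L,r}$ from Section~\ref{Sec:leg_boundary} explicit, which is exactly what the paper's phrase ``for the same reasons as in the discussion at the end of Section~\ref{Sec:leg_boundary}'' refers to.
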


\begin{proof} 
This follows for the same reasons as in the discussion at the end of Section \ref{Sec:leg_boundary}.  We have arranged that the map $\phi$, when restricted to the disk $\Gamma(L)\cap \{x_{1}\leq -\tau\}$, maps outside a large sphere containing $S_k$. (Note that although $\Gamma(L)$ is constructed by a doubling procedure, there is no doubling of the locus $S_k$, so the homotopy class reproduces $[f_L]$ and not twice that class.) 
\end{proof}

\subsection{Degenerating the almost complex structure\label{Sec:degenerate_1}}  We continue in the setting of Section \ref{sec:comp}. The map $\phi\colon S^n \to \R^{2n}_{\st}$ satisfies the conditions from the start of Section \ref{sec:basic}, so we have the moduli spaces of Floer-holomorphic and holomorphic disks as before.  Bearing in mind Lemma \ref{Lem:extension_first_part}, the evaluation map $\ev_1\colon \overline{\FF}_1 \to \phi(S^n)\subset \R^{2n}$ admits an extension over $(\DD\setminus \bigcup_j \DD_j)\times \MM$ with image inside the half-space $\{x_1 \leq -\tau\}$, which is disjoint from $\pr^{-1}(S_k) = (T^*\R^n)|_{S_k}$.  To construct the desired extension of the evaluation map to all of $\BB$ with image outside $(T^*\R^n)|_{S_k}$, in light of Lemma \ref{Lem:extension_second_part}, it suffices to prove that  for a suitable almost complex structure $J$ on $T^{\ast}\R^{n}$, the image of the total evaluation $\ev\colon D \times \MM\to T^{\ast}\R^{n}$ is completely disjoint from the submanifold $\pr^{-1}(S_k)\subset T^{\ast}\bR^{n}$. 

To that end, we consider a family of almost complex structures associated to the limit in which the fibres of $\pr\colon T^{\ast}\R^{n} \to \bR^n$ are shrunk to zero volume. Let $\beta \in \pi_2(T^{\ast}\R^{n},\phi(S^n))$ denote the generator of positive area, so for any taming almost complex structure $J$, the holomorphic discs in $\MM$  have relative homotopy class $\beta$ and area $A(\phi)=\int_{\beta}\omega_{\st}$.

\begin{Lemma}\label{Lemma:scale} 
	Suppose $J$ tames $\omega_{\st}$. 
	Let $\eta_\lambda\colon T^{\ast}\R^n \rightarrow T^{\ast}\R^n$ denote the map $\eta_\lambda(x,y) = (x,\lambda y)$. Then the area of any once punctured $J$-holomorphic disk with boundary on $\eta_\lambda(\phi(S^n))$ equals $A(\eta_{\lambda}\circ \phi)=\lambda A(\phi)$. 
	In particular  there exists $\lambda_0>0$ such that for any $0<\lambda<\lambda_0$ any $J$-holomorphic disk with one positive puncture and boundary on $\eta_\lambda\circ\phi$ is disjoint from $\pr^{-1}(S_{k})$.
\end{Lemma}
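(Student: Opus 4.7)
The plan is to prove the lemma in two stages: an area identity from exactness, and a monotonicity argument based on a uniform positive separation between $\eta_\lambda(\phi(S^n))$ and $\pr^{-1}(S_k)$.

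For the area identity, I would first observe that $\eta_\lambda^*\omega_{\st}=\lambda\,\omega_{\st}$, since $\eta_\lambda^*dx_i=dx_i$ and $\eta_\lambda^*dy_i=\lambda\,dy_i$; equivalently, $\eta_\lambda^*\theta=\lambda\theta$ for the Liouville primitive $\theta$ fixed in Section \ref{sec:basic}. Consequently, if $z\colon S^n\to\R$ is the primitive for $\phi^*\theta$ provided by the Legendrian lift of $\phi$, then $(\eta_\lambda\circ\phi)^*\theta=d(\lambda z)$, so $\eta_\lambda\circ\phi$ is exact with primitive $\lambda z$, and the action of its (unique) double point is $\lambda\bigl(z(p^+)-z(p^-)\bigr)=\lambda A(\phi)$. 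Stokes' theorem applied to a $J$-holomorphic disk with a single positive puncture (with the usual cut-off near the puncture and passage to the limit) identifies its symplectic area with the action of its asymptote, giving area $=\lambda A(\phi)$.

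For the disjointness statement I would verify first the key geometric fact $\pr(\phi(S^n))\cap S_k=\varnothing$, using the construction of $\phi$ from $L$ recalled in Section \ref{sec:comp}. The immersed sphere decomposes as a union $L^+\cup(\text{cylinder})\cup L^-$, where $L^+$ coincides with the original Lagrangian disk $L$ in the region $\{x_1\ge-\tau\}$, $L^-$ is contained in $\{x_1\le-\tau-2\epsilon\}$, and the cylindrical interpolating piece lies in $\{-\tau-2\epsilon\le x_1\le-\tau\}$. Since $L$ is disjoint from $\pr^{-1}(S_k)$, the piece $L^+$ does not project onto $S_k$; and since $S_k\subset\{x_1=\varepsilon\}$ with $\varepsilon>0$, neither $L^-$ nor the cylindrical piece does so on account of their $x_1$-coordinates. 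By compactness, $d_0:=\operatorname{dist}(\pr(\phi(S^n)),S_k)>0$, and because $\pr\circ\eta_\lambda=\pr$ we get
\[
\operatorname{dist}\bigl(\eta_\lambda(\phi(S^n)),\,\pr^{-1}(S_k)\bigr)\ge d_0
\]
uniformly in $\lambda>0$.

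With these two ingredients, the standard monotonicity inequality for $J$-holomorphic curves completes the argument: there is a constant $c>0$, depending on $J$ on the relevant compact region but not on $\lambda$, such that any $J$-holomorphic disk whose interior image contains a point $p$ at distance at least $d_0$ from its boundary has area at least $cd_0^2$. If some disk with boundary on $\eta_\lambda(\phi(S^n))$ met $\pr^{-1}(S_k)$, combining this with area $=\lambda A(\phi)$ would force $\lambda\ge cd_0^2/A(\phi)$, so taking $\lambda_0=cd_0^2/A(\phi)$ yields the conclusion. The step requiring most care is to match geometric area with the action of the asymptote via Stokes' theorem in the punctured setting; the monotonicity step is off-the-shelf, and the disjointness of base projections is elementary from the construction of $\phi$.
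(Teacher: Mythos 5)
Your proof is correct and takes essentially the same route as the paper: the pullback identity $\eta_\lambda^*\theta=\lambda\theta$ is exactly what the paper's ``area scaling is immediate'' refers to, and your monotonicity argument, with $d_0$ playing the role of the paper's $\epsilon/100$, is the same as the paper's disjointness step, just with the geometric separation justified more explicitly from the structure of $\phi$. (Incidentally, your threshold $\lambda_0=cd_0^2/A(\phi)$ has the correct quadratic scaling in the distance; the paper's displayed $\lambda_0<C\epsilon/A(\phi)$ appears to be a typo for $C\epsilon^2/A(\phi)$.)
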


\begin{proof}
The area scaling is immediate. The second result follows from monotonicity for holomorphic disks.  The boundary of the disk lies at finite distance $\ge \epsilon/100$ from $\pr^{-1}(S_k)$. Hence if the disk passes through a point in $S_k$ it has area bounded below by $C \epsilon^{2}$, for some constant $C>0$ related to the taming condition. Take $\lambda_0 < C\epsilon/A(\phi)$. 
\end{proof}

Equivalently, one can fix the Lagrangian immersion $\phi(S^n)$, and consider the almost complex structures $J_\lambda = \eta_\lambda^*(J)$. Let $\lambda_0$ be as in Lemma \ref{Lemma:scale} and write $\MM_{\lambda}$ for the moduli space of $J_{\lambda}$-holomorphic disks with one positive puncture and boundary on $\phi(S^n)$.

%

\begin{Corollary} \label{Cor:discs_localise}
	For $0< \lambda<\lambda_0$, the image of the evaluation map $\ev\colon D \times \MM_{\lambda} \to T^{\ast}\R^{n}$ is disjoint from $\pr^{-1}(S_k)$.
\end{Corollary}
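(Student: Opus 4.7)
The plan is to reduce the statement directly to Lemma \ref{Lemma:scale} via the tautological correspondence between $J_\lambda$-holomorphic curves with boundary on $\phi(S^n)$ and $J$-holomorphic curves with boundary on the fibre-rescaled image $\eta_\lambda(\phi(S^n))$.

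First I would observe that the linear map $\eta_\lambda(x,y) = (x,\lambda y)$ preserves the coisotropic subset $\pr^{-1}(S_k) \subset T^\ast\R^n$ setwise, because $\eta_\lambda$ acts trivially on the base coordinate and $\pr^{-1}(S_k)$ is the union of full fibres over $S_k$. Next, the definition $J_\lambda = \eta_\lambda^\ast J$ is arranged precisely so that post-composition $u \mapsto \eta_\lambda \circ u$ carries $J_\lambda$-holomorphic maps to $J$-holomorphic maps, and this operation takes maps with boundary on $\phi(S^n)$ to maps with boundary on $\eta_\lambda \circ \phi(S^n)$; since the double point of $\phi$ is carried to that of $\eta_\lambda \circ \phi$, the asymptotic condition at the positive puncture is preserved. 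In particular, $u \mapsto \eta_\lambda \circ u$ furnishes a bijection from $\MM_\lambda$ onto the moduli space of $J$-holomorphic once-punctured disks with boundary on $\eta_\lambda \circ \phi(S^n)$, covered by an equivariant identification of source disks.

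The last step is then to invoke Lemma \ref{Lemma:scale}: for $0 < \lambda < \lambda_0$, every $J$-holomorphic disk of the latter type is disjoint from $\pr^{-1}(S_k)$. Applying $\eta_\lambda^{-1}$, which again preserves $\pr^{-1}(S_k)$ setwise, we deduce that every $u \in \MM_\lambda$ has image disjoint from $\pr^{-1}(S_k)$, and hence so does the total evaluation $\ev\colon D \times \MM_\lambda \to T^\ast\R^n$. I do not expect any real obstacle here; the only thing to check carefully is the matching of asymptotic data at the puncture under the rescaling, which is immediate because $\eta_\lambda$ is linear and fixes the base point of the double point of $\phi$.
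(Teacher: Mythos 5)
Your proof is correct and follows essentially the same route as the paper: the paper's one-line argument is precisely the observation that $u \mapsto \eta_\lambda \circ u$ carries $J_\lambda$-holomorphic disks with boundary on $\phi(S^n)$ to $J$-holomorphic disks with boundary on $\eta_\lambda\circ\phi(S^n)$, which reduces the statement to Lemma \ref{Lemma:scale}. You have simply spelled out the implicit final step — that $\eta_\lambda$ preserves $\pr^{-1}(S_k)$ setwise, so disjointness transfers back — which the paper leaves to the reader.
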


\begin{proof}
This follows from Lemma \ref{Lemma:scale} on observing that	
if $u$ is a $J_\lambda$-holomorphic disk with boundary on $\phi(S^n)$ then $\eta_\lambda\circ u$  is a $J$-holomorphic disk with boundary on $\eta_\lambda\circ \phi(S^n)$.  
\end{proof}

\begin{Remark}
	A more vivid route to proving Corollary \ref{Cor:discs_localise} is to recall the correspondence, for $t \gg 0$ sufficiently small, between $J_t$-holomorphic discs with boundary on the exact Lagrangian immersion $\phi(S^n)$, and Morse flow trees for the corresponding Lagrangian front $\Lambda = \pr\circ\phi(S^n)$.  By hypothesis, $S_k\subset \bR^n \setminus \Lambda$, which means that there are \emph{no sheets} of the Lagrangian $\phi(S^n)$ lying over $S_k$. In particular, no Morse flow-tree for the Lagrangian can approach $S_k$, which is just the conclusion of the preceding corollary.
\end{Remark}

\subsection{Proofs of the local results}\label{Subsec:Main_proof}

\subsubsection{Proof of Theorem \ref{Thm:yasha}}\label{ssec:pfthmyasha}
Lemma \ref{Lem:hmtpywhitney} shows that $f_L\colon S^{n}\to \R^{n}\setminus \{0\}$ is homotopic to the composition $\pr\circ \phi$ where $\pr\colon T^{\ast}\R^{n}\to \R^{n}$ is the bundle projection, and where the Lagrangian immersion $\phi\colon S^{n}\to T^{\ast}\R^{n}$ with one double point was constructed in Section \ref{sec:comp}. Proposition \ref{Prop:spin} then gives a spin $(n+1)$-manifold $\BB$ such that $\partial\BB= S^{n}$ (compare to the proof of Corollary \ref{Cor:yasha} where we use that $\BB$ is parallelizable) and Lemma \ref{Lem:extension_second_part} gives an extension of the map $\ev_1=\phi$ on $S^{n}$ to a map $\ev_{1}\colon \BB\to T^{\ast}\R^{n}\setminus T_{0}^{\ast}\R^{n}$. Composing with the projection $\pr$ we find that $\pr\circ\phi$ extends to $\pr\circ\ev_1\colon\BB\to\R^{n}\setminus\{0\}$. Consider now the preimage $C_{\ell}=(\pr\circ\phi)^{-1}(\ell)$ of a ray $\ell\approx [0,\infty)$ emanating at $0\in\R^{n}$ which is transverse to $\pr\circ\phi$. Then $C_{\ell}$ is a closed $1$-manifold, and the tangent space of the sphere of rays at $\ell$ gives a normal framing of $C_{\ell}$. Similarly, let $F_{\ell}\subset \BB$ denote the oriented surface which is the preimage $(\pr \circ \ev_1)^{-1}(\ell)$ of $\ell$, similarly equipped  with a normal framing.

Let $\xi_{S^{n}}$ denote the unique spin structure on $S^{n}$. We view $\xi_{S^{n}}$ as a class in $H^{1}(SO'(S^{n}))$, where $SO'(S^{n})$ denotes the bundle of oriented frames on the stabilized tangent bundle $TS^{n}\oplus\R$. Similarly, we view the spin structure on $\BB$ as an element $\xi_{\BB} \in H^1(SO(\BB))$, where $SO(\BB)$ denotes the frame bundle of $\BB$. Note that $\xi_{S^{n}}$ is the restriction of $\xi_{\BB}$ to the boundary $S^{n}$. 

Given a normal framing $\nu$ of a circle $\gamma$ in $S^{n} \subset \BB$ we equip it with a full framing of $T\BB|_{\gamma}$ by adding a framing $\tau$ in the two remaining directions which rotates once compared to the framing given by the tangent vector of $S^{1}$ followed by the constant vector in $\R$. Let $\vec{\gamma}$ denote the curve $\gamma$ lifted to $SO'(S^{n})$ by the framing $(\tau,\nu)$. An explicit calculation shows that $\nu$ is the null cobordant framing if and only if
\[ 
\langle\xi_{S^{n}},\vec{\gamma}\rangle=0.
\] 

For the link $C_{\ell}$ it is elementary to check that the framing $(\tau,\nu)$ extends to a framing of $F_{\ell}$. This means that $\vec{C_{\ell}}$ bounds in $SO(\BB)$. Hence
\[ 
\langle\xi_{\BB},\vec{C_{\ell}}\rangle=\langle\xi_{S^{n}},\vec{C_{\ell}}\rangle=0.
\] 
This shows that $C_{\ell}$ is framed null-cobordant, which by the Pontryagin-Thom construction finishes the proof.
\qed

\subsubsection{Proof of Corollary \ref{Cor:yasha}\label{Subsec:Pf_yasha}}
The proof of Corollary \ref{Cor:yasha} follows similar lines, but uses triviality of the tangent bundle of the filling $\BB$ beyond its spin structure.

Let $L$ be the Lagrangian disk. The stable homotopy class of $f_{L}$ is the homotopy class of the $m$-fold suspension $\Sigma^{m}f_{L}$ for any sufficiently large $m$. By Lemma \ref{Lem:suspension}, this is also the homotopy class of the map $f_{C^{m}(L)}$ associated to the $(n+m)$-dimensional Lagrangian disk $C^{m}(L)= C(C(\dots C(L))$. We point out that although the Legendrian boundary of $C^{m}(L)$ might not be the standard Legendrian unknot, it is a knot with only one Reeb chord, and that is sufficient for the argument below to apply.

Picking $m$ sufficiently large to be in the stable range, and also so that $\pi_{n+m}(U/O)$ vanishes (e.g. $n+m \equiv -1 \mod 8$), Proposition \ref{Prop:parallel} gives a parallelizable manifold $\BB$ with boundary $S^{n+m}$ and Lemma \ref{Lem:extension_second_part} gives an extension $\pr\circ\ev_1$ of the corresponding map $\phi\colon S^{n+m} \to T^*(\R^{n+m}\setminus S^k)$ to $\BB$. Consider the composition $\pr\circ\phi$ followed by the inclusion $\R^{n+m}\subset S^{n+m}$. Let $\Gamma \approx S^{n+m-k-1} \subset \R^{n+m}\setminus S^k$ be a fiber sphere in the unit normal bundle of $S^{k}$. Then $S^{n+m}\setminus S^{k}\approx \Gamma\times D^{k+1}$ is foliated by open $(k+1)$-disks $D_{\gamma}$ parameterized by $\gamma\in\Gamma$. For $D_{\gamma}$ transverse to $\pr\circ \ev_{1}\colon\BB\to S^{n+m}\setminus S^{k}$ we find that 
\[ 
M_{\gamma}=(\pr\circ\ev_{1})^{-1}(D_{\gamma})\subset S^{n+m}
\]
is a framed submanifold that framed bounds the framed submanifold
\[ 
W_{\gamma}=(\pr\circ\ev_1)^{-1}(D_{\gamma})\subset\BB.
\]
The manifold $\BB$ is a parallelizable $(n+m+1)$-manifold; the stability condition $k<(n+m-3)/2$ guarantees that we can perform framed surgery on $\BB$, along framed spheres in the interior and disjoint from $W_{\gamma}$, to yield a parallelizable manifold $\BB'$ which is  $(k+2)$-connected. Fix a Morse function on $\BB'$ with no critical points of index $\leq k+2$, and with gradient field pointing outwards along the boundary.  Via  gradient flow, one can then isotope the $(k+2)$-dimensional subset $W_{\gamma} \subset \BB'$ into a collar neighborhood of the boundary of $\BB'$ (which is identified with a collar neighborhood of the boundary of $\BB$), since for dimension reasons it will generically miss the ascending manifolds of all critical points. By the Pontryagin-Thom construction the result again follows. \qed

\begin{Remark}\label{Rmk:3dim}
In the borderline case $\{k=0, n=3\}$, the relevant homotopy group $\pi_3(S^2)$ is not stable, but Corollary \ref{Cor:yasha} shows vanishing of the corresponding stabilized invariant.  Concretely this means that
if $L \subset T^*(\R^3\backslash \{0\})$ is a Lagrangian disk which coincides with a fiber outside a compact set, then $[f_L] \in \pi_3(S^2) \cong \Z$ is even.  

Note that when $n=3$ the moduli space $\FF_{-2}$ has virtual dimension zero, cf. Lemma \ref{Lem:vdim_floer}, so the construction of the bounding manifold $\BB$ does not go through directly in this case.
\end{Remark}

\subsection{Quasi-isomorphism type}\label{Subsec:quasi-iso}

For a closed exact Lagrangian $L \subset T^*Q$, it is known that $L$ is isomorphic to the zero-section in the Fukaya category.  We point out that the hypotheses in Theorem \ref{Thm:yasha} imply the analogous result for the nearby Lagrangian fiber, though we know of no Floer-theoretic argument to  constrain the homotopy class $f_L$.

Fix a coefficient field $\bK$.  A Liouville like manifold $(Y, \omega=d\theta)$ (see \cite[Appendix B.3]{EL} for a more detailed description of manifolds $Y$ as considered below) then has a well-defined wrapped Fukaya category $\scrW(Y)$, an $A_{\infty}$-category over $\bK$, whose objects are exact spin Lagrangian submanifolds  which are either closed or are cylinders on Legendrian submanifolds of the ideal boundary near infinity; see \cite{Abouzaid-Seidel} for the construction.  

\begin{Lemma}\label{Lem:object_in_wrapped}
Suppose $n\geq 3$. In the situation of Theorem \ref{Thm:yasha}, $L$ defines an object of the wrapped Fukaya category $\scrW(T^*(\bR^n\setminus \{0\}))$ quasi-isomorphic to the cotangent fibre.  
\end{Lemma}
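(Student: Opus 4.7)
The plan is to combine a generation theorem for the wrapped Fukaya category with the locality of Floer data in the region where $L$ and $T^{\ast}_x$ agree.

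First, I would verify that $L$ legitimately defines an object of $\scrW(T^{\ast}(\R^n\setminus\{0\}))$: it is exact (as a closed exact Lagrangian in $T^{\ast}\R^n$ disjoint from $T^{\ast}_0\R^n$), and outside a compact set it coincides with $T^{\ast}_x$, so its end is cylindrical on a Legendrian unknot in the ideal contact boundary. Next, I would construct a canonical morphism $c\in CW^{0}(T^{\ast}_x,L)$ from the agreement at infinity: after a small compactly supported Hamiltonian perturbation $L'$ of $L$ transverse to $T^{\ast}_x$, the intersection $L'\cap T^{\ast}_x$ is finite (since $L=T^{\ast}_x$ outside a compact set), and the ``identity at infinity'' provides a distinguished degree-zero cycle $c$.

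To show $c$ is a quasi-isomorphism, I would invoke generation. Since $n\ge 3$, the base $\R^n\setminus\{0\}\simeq S^{n-1}$ is simply connected and spin, so by the Abouzaid--Seidel / Abouzaid generation theorem \cite{Abouzaid-Seidel, Abouzaid:htpy} any cotangent fibre $T^{\ast}_y$ split-generates $\scrW(T^{\ast}(\R^n\setminus\{0\}))$. By the Yoneda lemma, it suffices to check that $\mu^{2}(c,-)\colon HW^{\ast}(T^{\ast}_y,T^{\ast}_x)\to HW^{\ast}(T^{\ast}_y,L)$ is an isomorphism for a single well-chosen $y$.

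Now choose $y$ far from the compact set $K\subset\R^n$ where $L$ differs from $T^{\ast}_x$. Pick a wrapping Hamiltonian of radial type supported in the symplectization end, so that the time-$k$ chords generating $CW^{\ast}(T^{\ast}_y,\cdot)$ correspond to Reeb chords at infinity that are identical for $L$ and $T^{\ast}_x$. A monotonicity argument modelled on Lemma \ref{Lemma:scale} and Corollary \ref{Cor:discs_localise}, implemented after scaling the fibre direction of $T^{\ast}\R^n$ so that fibre-area gets small, confines all Floer strips and the triangles computing $\mu^{2}(c,-)$ to a small neighborhood of the zero-section away from $K$. Consequently the chain complexes $CW^{\ast}(T^{\ast}_y,L)$ and $CW^{\ast}(T^{\ast}_y,T^{\ast}_x)$ coincide literally, and $\mu^{2}(c,-)$ becomes the identity on the nose, so $c$ is a quasi-isomorphism.

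The main obstacle is the locality step: rigorously confining all relevant Floer curves to the region on which $L$ and $T^{\ast}_x$ agree. This requires a careful simultaneous choice of wrapping Hamiltonian and taming almost complex structure together with an area/monotonicity estimate, but the scaling trick used to prove Corollary \ref{Cor:discs_localise} should adapt verbatim, since the geometric input is the same: $K$ sits at definite distance from the cylindrical ends and from $T^{\ast}_y$.
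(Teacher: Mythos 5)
Your approach is genuinely different from the paper's and unfortunately has a gap in its central ``locality'' step.

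The paper's proof avoids working in the non-compact wrapped category directly: it compactifies the base $\R^n\setminus\{0\}\simeq S^{n-1}\times\R$ to $S^{n-1}\times S^1$, attaches a handle to the Legendrian unknot at infinity to close up $T^*_x$, recognizes the result as the plumbing $X=T^*(S^{n-1}\times S^1)\#_\partial T^*S^n$, and then uses the classification of exact Lagrangians in such plumbings \cite{AbSm} together with $HF(L',T^*_p)=0$, $HF(L',T^*_q)=\bK$ and Viterbo functoriality. This sidesteps any need to localize Floer curves.

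Your plan instead stays in $\scrW(T^*(\R^n\setminus\{0\}))$, constructs a morphism $c\in CW^0(T^*_x,L)$, and tries to show it is a quasi-isomorphism by proving $\mu^2(c,-)\colon HW^*(T^*_y,T^*_x)\to HW^*(T^*_y,L)$ is an isomorphism via a locality/monotonicity argument modelled on Lemma \ref{Lemma:scale} and Corollary \ref{Cor:discs_localise}. The gap is in that last step. In the wrapped setting you must take a direct limit over Hamiltonians of increasing slope (or equivalently use a Hamiltonian quadratic at infinity); the resulting Hamiltonian chords from $T^*_y$ to $L$ have unbounded action, and for high slope they necessarily reach the compact region $K$ where $L\neq T^*_x$. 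The chords that land inside $K$ are generically different for $L$ and for $T^*_x$, so the claim that the complexes $CW^*(T^*_y,L)$ and $CW^*(T^*_y,T^*_x)$ ``coincide literally'' is false. The monotonicity argument in Corollary \ref{Cor:discs_localise} works precisely because the once-punctured disks lie in a fixed relative homotopy class with a fixed (small, after scaling) area; it does not extend to Floer strips and triangles of unbounded action. Scaling the fibre direction does not help here either: it does not prevent triangles from entering the region over $K$, and moreover it moves the cotangent-fibre boundary conditions $T^*_y$ and $T^*_x$ themselves. A secondary issue is that the construction of $c$ is less canonical than stated: since $L$ and $T^*_x$ agree at infinity, after a compactly supported perturbation they are disjoint near infinity, so all intersection points lie in $K$, and there is no distinguished ``identity at infinity'' cycle without further argument (indeed, constructing such a continuation-type element is essentially tantamount to knowing $L$ and $T^*_x$ are related by a compactly supported Hamiltonian isotopy, which is close to the conclusion one is trying to prove).

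What the paper's route buys, beyond correctness, is that it transports the problem to a closed setting where the algebraic structure of the Fukaya category (twisted complexes on the two cores of a plumbing) is known; the only Floer computations needed are ranks of $HF$ with the two thimbles, which are elementary. Your route would be attractive if it worked, since it avoids the compactification and the appeal to \cite{AbSm}; but making it rigorous would require a genuinely new locality mechanism (e.g.\ an action-filtration or neck-stretching argument tailored to the wrapped direct limit), not a direct adaptation of the monotonicity lemma.
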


\begin{proof} We compactify the base $\bR^n\setminus \{0\} = S^{n-1} \times \bR$ to $S^{n-1} \times S^1$, and we compactify the distinguished fibre $T_x^*$ by adding a handle to the Legendrian unknot at infinity. This yields the plumbing $X = T^*(S^{n-1}\times S^1) \#_{\partial} T^*S^n$, which is naturally a Liouville domain with an exact symplectic structure $\omega$.  The manifold $X$ contains non-compact Lagrangian thimbles $T_p^*$ and $T_q^*$, which are cotangent fibres to points $p\in S^{n-1}\times S^1$ and $q\in S^n$, in both cases lying near infinity (i.e. in the parts of the base added in the compactification).  The Lagrangian disk $L$ extends to a Lagrangian sphere $L'$ in $X$, which meets the cotangent fibre $T_q^*$ transversely once, and is disjoint from the fibre $T_p^*$. Since $n\geq 3$, the results of \cite{AbSm} show that every compact Lagrangian submanifold in $X$ can be expressed as a twisted complex on the two core components, and then the conditions
\[
HF(L', T_p^*) = 0, \quad HF(L',T_q^*) = \bK
\]
imply that $L'$ is quasi-isomorphic in the wrapped (and hence compact) Fukaya category $\scrW(X)$ to the core component $S^n \subset X$.  Applying Viterbo functoriality \cite{Abouzaid-Seidel}, we infer that the original Lagrangian $L$ is quasi-isomorphic to the cotangent fibre of $\bR^n\backslash \{0\}$. \end{proof}

\section{Whitney spheres in the complement of a monotone Lagrangian}
In this section we consider a more global version of Theorem \ref{Thm:yasha}, where we replace the fiber $T_0^*\bR^n$ by a monotone submanifold $C$ which is either closed or has ideal Legendrian boundary $\Gamma$, and study the homotopy class of an immersed Lagrangian sphere $S$ with one double point in the complement of $C$. As in section \ref{Sec:degenerate_1}, we will study the moduli space $\BB$ for a specific almost complex structure for which the holomorphic disks with boundary on $S$ do not intersect $C$. The appropriate almost complex structure is obtained from symplectic field theory, stretching the boundary of a neighborhood of $C$. In the limit holomorphic curves then fall apart into holomorphic buildings, and dimension estimates for the pieces yield sufficient conditions for holomorphic disks on $S$ to stay away from $C$. 

Note that, for a general Lagrangian submanifold $C$, there is no global projection to a sphere linking $C$, as existed in the special case for $T_0^{\ast}\R^{n}$ considered previously, and hence we cannot associate the linking homotopy class via such a projection. We refer to Appendix \ref{sec:algtop} for a general discussion of the topology of the complement of a Lagrangian submanifold in $\R^{2n}$. In Sections \ref{Sec:index1} and \ref{sec:stretch} we give dimension formulae for holomorphic curves and then turn to a more detailed study of the holomorphic buildings that arise. This leads to a proof of Theorem \ref{Thm:embLag}, which occupies Section \ref{Sec:pf_second_thm}.

\subsection{Neck stretching around Lagrangian submanifolds}\label{sec:neckstretch}
Let $C$ be a closed Lagrangian submanifold of $\R^{2n}_{\st}$. Then $C$ has a neighborhood which is symplectomorphic to a disk bundle neighborhood of the zero-section in $T^{\ast}C$. We identify this neighborhood with the union of two pieces: a collar region $[0,-1)\times U^{\ast}C$, where $U^{\ast}C$ denotes the unit cotangent bundle equipped with the standard contact form and the product has the symplectic form of the symplectization,  together with a suitably scaled version of the unit disk cotangent bundle $D^{\ast}C$ glued in. Stretching the neck then corresponds to replacing the collar region by $[0,-T)\times U^{\ast}C$ for $T\to\infty$. In the limit the manifold separates into two pieces, one a completion of $\R^{2n}_{\st} \setminus C$ with negative end $U^{\ast}C$, and the other symplectomorphic to $T^{\ast} C$ with positive end $U^{\ast}C$. 

Given instead a Lagrangian submanifold $C$ with non-empty ideal Legendrian boundary $\Gamma$, before stretching the neck around $C$ we must adjust the Liouville structure, so the Liouville vector field agrees with the Liouville field of the cotangent bundle of $C$ near $C$. To accomplish this we proceed as in \cite[Section 6.1 \& Figure 7]{ENS}. Consider a large ball such that $C$ intersects its boundary sphere in $\Gamma$. We create the new Liouville structure by attaching $D^*(\Gamma \times [0,\infty))$ along $\Gamma$, where $D^*\Gamma$ denotes a disk subbundle of the cotangent bundle $T^{\ast}\Gamma$. One can interpolate between the usual radial Liouville vector field on $\R^{2n}_{\st}$ and the field $p\cdot \partial_p$ in the cotangent bundle neighborhood to define a Liouville field for the new structure. 
This gives a Liouville-like manifold, that we denote $\R^{2n}_{C}$, with non-compact contact boundary. 
Note that the Liouville-like manifolds we consider look like ordinary Liouville manifolds except that they have additional ends where they look like the standard fiberwise Liouville structure on the cotangent bundle $T^{\ast}(\Gamma\times[0,\infty))$ where $\Gamma$ is a closed $(n-1)$-manifold. We refer to
\cite[Appendix B.3]{EL} for a discussion of such domains.  It still contains $C$ as an embedded Lagrangian submanifold, and in $\R^{2n}_C$ we can stretch the neck around $C$ just as in the closed case. The Maslov class of $C$ is unaffected by the passage from $\R^{2n}_{\st}$ to $\R^{2n}_C$, and in particular monotonicity of $C$ is unaffected.  For uniform notation, we let $\R^{2n}_{C}=\R^{2n}_{\st}$ when $C$ is closed.  

\begin{Remark}
Typically in SFT one stretches the neck around a closed contact (or stable Hamiltonian) hypersurface. When $C$ has non-empty ideal boundary, the contact boundary of $\R^{2n}_C$ is not compact, but monotonicity constrains all holomorphic curves we consider to live in a compact set, where the contact hypersurface is geometrically bounded. This is sufficient for the conclusions of \cite{BEHWZ} to apply.
\end{Remark}

\subsection{Conley-Zehnder and Morse indices\label{Sec:index1}}

Virtual dimensions of moduli spaces of punctured holomorphic curves are typically expressed in terms of Conley-Zehnder indices of Reeb orbits. These may be canonically defined in the unit cotangent bundle of an orientable manifold,  but  depend on  additional choices in the non-orientable case.  In this section we explain our convention for such indices.

Let $C\subset \R^{2n}_{\st}$ be a Lagrangian submanifold. 
Fix a Riemannian metric on $C$. The Reeb orbits on the unit cotangent bundle are the oriented closed geodesics for the metric. We will  not distinguish the geodesic from the Reeb orbit in our notation. In case $C$ has Legendrian boundary $\Gamma$ we choose the metric in the cylindrical end $[0,\infty)\times \Gamma$ to have the form $ds^{2}=dt^{2}+ f(t) g$, where $g$ is a metric on $\Gamma$ and where $f(t)>0$ is a function with $f'(t)>0$. Then the $t$-coordinate along a non-constant geodesic in the cylindrical end cannot have a local maximum. Hence, there are no closed geodesics in the end. 

If $\pr\colon T^*C \to C$ is projection, then $T(T^*C) \cong \pr^*(TC\otimes \C)$. 
Let $\det_{\C}(TC)$ denote the corresponding determinant bundle, i.e. the complex line bundle which is the determinant of the complexified tangent bundle $TC \otimes \C$.

\begin{Lemma}\label{l:dettrivial}
The bundle $TC\otimes\C$ is trivial and hence the complex line bundle $\det_{\C}(TC)$ is trivial.
\end{Lemma}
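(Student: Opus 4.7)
The plan is to exploit the fact that a Lagrangian immersion in a symplectic vector space trivializes the complexified tangent bundle of the source. Fix the standard compatible complex structure $J$ on $\R^{2n}_{\st}$. For any Lagrangian submanifold $C\subset \R^{2n}_{\st}$, the decomposition $T\R^{2n}|_C = TC \oplus J(TC)$ as real vector bundles, combined with the identification $v\otimes 1 \mapsto v$, $v\otimes i \mapsto Jv$, gives a canonical isomorphism
\[
TC\otimes_{\R}\C \;\cong\; T\R^{2n}_{\st}|_{C}
\]
of complex vector bundles. Since $T\R^{2n}_{\st}$ is trivial as a complex vector bundle on all of $\R^{2n}_{\st}$ (being the tangent bundle of a vector space with its standard complex structure), its restriction to $C$ is trivial, and hence $TC\otimes\C$ is trivial.

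Triviality of $\det_{\C}(TC) = \Lambda^{n}_{\C}(TC\otimes\C)$ then follows immediately by taking the top exterior power. No modifications are required in the non-closed case where $C$ has Legendrian ideal boundary $\Gamma$: the preceding argument applies verbatim to $C$ as a Lagrangian submanifold of $\R^{2n}_{\st}$ (and equally to $\R^{2n}_C$, since the modification of the Liouville structure in Section \ref{sec:neckstretch} does not change the underlying almost complex manifold on which $C$ sits).

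There is no substantive obstacle here; the lemma is essentially a tautology once one unpacks the Lagrangian condition. The point of recording it in this section is to make subsequent choices of trivializations unambiguous, so that the Conley--Zehnder indices and Maslov-type quantities appearing in the dimension formulas in the rest of Section \ref{Sec:index1} are well-defined from a fixed trivialization of $\det_{\C}(TC)$.
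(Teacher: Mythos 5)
Your proof is correct and follows essentially the same route as the paper: both use the Lagrangian condition to identify $TC\otimes\C$ with $T\C^{n}|_{C}$ via the standard complex structure, then observe that the latter is trivial. You spell out the isomorphism a bit more explicitly, but the argument is the same.
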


\begin{proof}
Consider the standard complex structure $i$ on $\C^{n}\approx\R^{2n}_{\st}$. Since $C$ is Lagrangian, multiplication by $i$ gives an isomorphism from the tangent bundle of $C$ to its normal bundle in $\C^{n}$. It follows that $TC\otimes \C\approx T\C^{n}|_{C}$. The lemma follows. 	
\end{proof}

We next define trivializations of $\det_{\C}(TC)$. Consider the real line bundle $\det(TC)$ over $C$ and fix a section $s\colon C\to \det(TC)$ that is transverse to the $0$-section. (When $C$ is orientable we can take $s$ to be non-vanishing). Let $P=s^{-1}(0)$. 

\begin{Lemma} \label{lem:P_orientable}
 The zero-set $P\subset C$ is an orientable submanifold whose $\Z/2$-Poincar\'e dual  represents the first Stiefel-Whitney class $w_1(TC) \in H^1(C;\Z/2)$. An open neighborhood $N(P)$ of $P$ in $C$ is also orientable, so $N(P) \approx P\times(-\epsilon,\epsilon)$. A choice of  orientation of the normal bundle of $P$ in $C$ defines an integral lift $W_1 \in H^1(C;\Z)$ of $w_{1}(TC)$.
 \end{Lemma}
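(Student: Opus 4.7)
The plan is to split the lemma into three pieces and treat each by standard arguments: smoothness and orientability of $P$, the Poincar\'e-duality identity, and construction of $W_1$.

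First, transversality of $s$ to the zero section of $\det(TC)$ makes $P = s^{-1}(0)$ a codimension-one submanifold of $C$, and the fibrewise derivative $ds$ along $P$ provides a canonical bundle isomorphism $\nu_P \cong \det(TC)|_P$. Combining this with the conormal short exact sequence $0 \to TP \to TC|_P \to \nu_P \to 0$ and taking top exterior powers yields $\det(TC)|_P \cong \det(TP) \otimes \nu_P$. Substituting $\nu_P \cong \det(TC)|_P$ cancels the $\det(TC)|_P$ factors and leaves $\det(TP)$ trivial, so $P$ is orientable. For the Poincar\'e-duality identity I would invoke the standard fact that the mod-$2$ Poincar\'e dual of the transverse zero-set of a section of a real line bundle $L$ is $w_1(L)$; applied to $L = \det(TC)$ with $w_1(\det TC) = w_1(TC)$ this gives $\mathrm{PD}[P] = w_1(TC)$ in $H^1(C; \Z/2)$.

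For the integral lift I would use the Pontryagin--Thom construction. An orientation of $\nu_P$ trivializes it as a real line bundle, so the tubular neighborhood theorem identifies a neighborhood $N(P) \subset C$ with $P \times (-\epsilon,\epsilon)$, which is orientable since both factors are. The Pontryagin--Thom collapse $C \to N(P)/\partial N(P)$ followed by projection to $S^1$ (i.e.\ collapsing the $P$-factor to a point) yields a map $C \to S^1$, hence a class $W_1 \in [C, S^1] = H^1(C; \Z)$. A regular value of this map has preimage equal to $P$ with its chosen normal orientation, so the mod-$2$ reduction of $W_1$ is Poincar\'e-dual to $[P]$ and hence equals $w_1(TC)$ by the previous step.

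The main work is in the first paragraph: once the identification $\nu_P \cong \det(TC)|_P$ and its compatibility with the determinant of the conormal sequence are in hand, the remaining pieces are standard characteristic-class and Pontryagin--Thom considerations.
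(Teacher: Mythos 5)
Your argument for the orientability of $P$ is correct, and takes a mildly different route from the paper (which works in the total space of $\det(TC)$): the isomorphism $\nu_P\cong\det(TC)|_P$ from $ds$, combined with $\det(TC)|_P\cong\det(TP)\otimes\nu_P$, does force $\det(TP)$ to be trivial. The mod-$2$ Poincar\'e duality step is standard and fine.

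The third paragraph, however, has a real gap. The sentence ``An orientation of $\nu_P$ trivializes it as a real line bundle'' presupposes that $\nu_P$ is orientable, and nothing before it establishes this: the first paragraph only shows $\det(TP)$ is trivial, and feeding that back into $\det(TC)|_P\cong\det(TP)\otimes\nu_P$ just reproduces the tautology $\nu_P\cong\det(TC)|_P$. Orientability of $\nu_P$ is false in general for a transverse zero set of $\det(TM)$ --- for $M=\RP{2}$ with $s$ a generic section of the orientation line bundle, $P\cong\RP{1}$ is an orientable circle whose normal bundle is the M\"obius band. What saves the Lagrangian case, and what the paper uses, is Lemma~\ref{l:dettrivial}: triviality of $TC\otimes\C$ gives $w_1(TC)^2=w_2(TC\otimes\C)=0$, and that vanishing is what yields orientability of $\nu_P$. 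Your proof never invokes the Lagrangian hypothesis at all, so it cannot prove the statement; without orientability of $\nu_P$ the identification $N(P)\approx P\times(-\epsilon,\epsilon)$ fails and the Pontryagin--Thom construction of $W_1$ does not get started.
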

 
 \begin{proof} 
 First note that the total space $E$ of $\det(TC)$ is orientable. The normal bundle to $P$ in $E$ is the sum of the normal bundle $\nu_{1}$ to $P$ in $C$ and the normal bundle $\nu_{2}$ of $C$ in $E$. Since $P$ is Poincar\'e dual to $w_{1}(TC)$ it follows that $w_{1}(\nu_{1})=w_{1}(TC)|_{P}$. Since $\nu_{2}=\det(TC)$ by definition of $E$, also $w_{1}(\nu_2)=w_{1}(TC)|_{P}$. Hence the normal bundle to $P$ in $E$ has vanishing first Stiefel-Whitney class and since $E$ is orientable, $P$ is orientable.  
 
 
We claim that a neighborhood $N(P)$ of $P$ is orientable as well. To see this, note that, by orientability of $P$, $N(P)$ is orientable provided the normal bundle $\nu_{1}$ of $P$ in $C$ is trivial.  Let $\sigma\colon P\to \nu_{1}$ be a section of $\nu_{1}$ transverse to the $0$-section. Since $P$ is $\Z/2$-Poincar\'e dual to the first Stiefel-Whitney class $w_{1}(TC)$, $\sigma^{-1}(0)$ is Poincar\'e dual to $w_1(TC)^{2}$. But 
\[
 w_2(TC \otimes \C) = w_{2}(TC\oplus TC) = w_1(TC)^2 = 0,
\]
by Lemma \ref{l:dettrivial}. Hence $\nu_{1}$ is orientable and so is $N(P)$, and we have a tubular neighborhood $N(P)\approx P\times(-\epsilon,\epsilon)$ of $P$ in $C$ as required. Finally, picking an orientation of $\nu_{1}$ allows us to define a $\Z$-valued intersection number between loops in $C$ and $P$. Since $P$ is $\Z/2$-Poincar\'e dual to $w_{1}(TC)$, it is clear that the resulting class in $H^{1}(C;\Z)$ is an integral lift of $w_{1}$.
\end{proof}

To normalize sections in the construction below we use the metric on $TC$ to induce a metric $|\cdot|$ on the real line bundle $\det(TC)$. Let $s\colon C\to \det(TC)$ be the section above and define 
\[ 
v'(q)=\frac{1}{|s(q)|}s(q),\quad q\in C\setminus N(P).
\]
Then $v'$ trivializes $\det_{\C}(TC)$ over $C\setminus N(P)$ by viewing $v'$ as a non-zero section of $\det_{\C}(TC)$. 

By the above discussion also $\det(TC)|_{N(P)}$ is trivial. The boundary of $N(P)\approx P\times(-\epsilon,\epsilon)$ is disconnected and the restriction of $v'$ to the boundary component $\{-\epsilon\}\times P$ gives a unit length section. Let $v''\colon N(P)\to\det(TC)$ be the unique extension of this section as a unit length section then $v''$ gives a trivialization of $\det_{\C}(TC)$ over $N(P)$. The trivializations $v'$ and $v''$ can be compared along $P\times\{\pm\epsilon\}$. Since $s$ has a transverse zero along $P$ we find the following: $v'=v''$ along $P\times\{-\epsilon\}$ and $v'=-v''$ along $P\times\{\epsilon\}$. We use this to fix a trivialization $v$ of $\det_{\C}(TC)$ over all of $C$ as follows:
\begin{equation}\label{eq:v_triv} 
v=
\begin{cases}
v'(q) &\text{ for }q\in C\setminus N(P),\\
e^{i\pi\frac{(t+\epsilon)}{2\epsilon}}v''(p,t) &\text{ for }(p,t)\in P\times[-\epsilon,\epsilon]=N(P).
\end{cases}
\end{equation}
We will denote the Conley-Zehnder index of a Reeb orbit $\gamma$ defined with respect to this trivialization $\CZ^{w_1}$. Denote by $\iota(\gamma)$ the Morse index of an oriented closed geodesic, viewed as a critical point of the energy function on the free loop space. 

\begin{Lemma}\label{l:CZw_1}
The Conley-Zehnder index of a Reeb orbit $\gamma$ in $U^{\ast} C$ with respect to the trivialization $v$ of $\det_{\C}(TC)$, see \eqref{eq:v_triv}, satisfies the following:  
	\begin{equation} \label{eqn:alternative}
	\CZ^{w_1}(\gamma) = \iota(\gamma)-W_1(\gamma).
	\end{equation}
	\end{Lemma}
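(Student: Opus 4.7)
The plan is to first treat the orientable case, where the formula is classical, and then compare the trivialization $v$ defined in (\ref{eq:v_triv}) to a locally orientation-based trivialization along the geodesic $\gamma$, counting the discrepancy in half-twists at each transverse crossing of $\gamma$ with $P$.

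If $TC$ is orientable then $P = \varnothing$, $W_1 \equiv 0$, and $s$ can be chosen non-vanishing, so $v = v'$ is the complexification of a real unit section of $\det(TC)$. The identity $\CZ^{w_1}(\gamma) = \iota(\gamma)$ then reduces to the classical identification, going back to Duistermaat and developed for cotangent bundles by Viterbo, Weber and Abbondandolo--Schwarz, of the Conley--Zehnder index of a closed Reeb orbit on $U^{\ast}C$ with the Morse index of the corresponding closed geodesic, the Reeb-side trivialization of $\det_{\C}(TC)$ being induced by the chosen orientation of the base. I would simply invoke this result.

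For general $C$, I would compare $v$ along $\gamma$ with $v'$, viewed as a trivialization of $\det_{\C}(TC)$ defined on $\gamma \setminus (\gamma \cap P)$ by the complexification of $s/|s|$. On each component of $\gamma \setminus (\gamma \cap P)$ the two trivializations agree. At each transverse intersection of $\gamma$ with $P$, the ``real'' trivialization $v'$ is discontinuous, flipping by $-1$ because $s$ crosses zero transversely, whereas by construction (\ref{eq:v_triv}) the trivialization $v$ instead rotates \emph{continuously} by a half-turn $e^{i\pi}$ in $\det_{\C}(TC)$, with sign determined by the chosen orientation of $\nu_1$. Summing with sign over the intersections of $\gamma$ with $P$ gives a total relative rotation of $\pi W_1(\gamma)$. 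Applying the standard change-of-trivialization formula---altering a trivialization of $\det_{\C}$ along a loop by a phase of total winding $2\pi k$ shifts Conley--Zehnder by $-2k$---this contributes a shift of $-W_1(\gamma)$, while $\iota(\gamma)$ is intrinsic to the energy functional and independent of the trivialization; combined with the orientable case this yields $\CZ^{w_1}(\gamma) = \iota(\gamma) - W_1(\gamma)$.

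The step that requires most care, and is the main obstacle, is to make the comparison between $v$ and $v'$ rigorous when $W_1(\gamma)$ is odd, since then $v'$ is genuinely discontinuous on the loop $\gamma$ rather than a bona fide global trivialization, and ``half a winding'' is a priori meaningless for loops. I would handle this either by lifting to the orientation double cover $\widetilde{C} \to C$---on which both $\pi^{\ast}v$ and the pullback of a genuine orientation trivialization are well defined, and where $W_1$ of the lifted loop vanishes so the orientable case applies directly---and then descending using the natural behavior of $\CZ^{w_1}$, $\iota$ and $W_1$ under double covers; or, more directly, by parametrizing $\gamma \colon [0,L] \to C$ and comparing the two trivializations as paths rather than loops in $U(1)$, where half-rotations are unambiguous and the path version of the change-of-trivialization formula applies.
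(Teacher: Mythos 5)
Your argument matches the paper's proof in substance: both decompose the rotation of the linearized Reeb flow relative to $v$ into the contribution from $N(P)$, where the built-in $e^{i\pi(t+\epsilon)/2\epsilon}$ twist of $v$ contributes $-1$ per signed crossing of $\gamma$ with $P$ (giving $-W_1(\gamma)$), and the contribution from $C\setminus N(P)$, where $v=v'$ is real and the Jacobi-field zeros produce $\iota(\gamma)$; your ``option (b)'' (comparing trivializations along $\gamma$ parametrized on $[0,L]$ rather than as a loop) is precisely how the paper sidesteps the issue that $v'$ is not a global trivialization. One caution about your ``option (a)'': when $W_1(\gamma)$ is odd, the preimage of $\gamma$ in the orientation double cover is a single loop of twice the length, and neither $\iota$ nor $\CZ$ behaves simply under passing to such an iterated loop, so ``descending'' is not automatic and would require an additional argument.
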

	
\begin{proof}
	The Conley-Zehnder index $\CZ^{w_{1}}$ measures rotations in the determinant of the linearized Reeb flow with respect to the given trivialization $v$ in \eqref{eq:v_triv}. Here the linearized Reeb flow is the linearized geodesic flow. After small deformation of $P$, we can assume that no Jacobi field of any closed geodesic vanishes near $P$, and that the closed geodesics intersect $P$ transversely. Taking $N(P)$ sufficiently small we then find that the linearized Reeb flow is  approximately constant with respect to $v''$ in $N(P)$ and thus rotates $-\pi$ relative to the reference trivialization $v$ of $\det_{\C}(TC)$ here. In $C-N(P)$ $v=v'$ is the standard section of the real determinant $\det(TC)$ and zeros of Jacobi fields give contributions to the Conley-Zehnder index. Adding these contributions proves \eqref{eqn:alternative}. 
\end{proof}
	

To compute dimensions of disks and spheres in the component $\R^{2n}\setminus C$ after stretching,  we use the Conley-Zehnder index of Reeb orbits as defined with respect to the standard global trivialization of the tangent bundle of $\R^{2n}_C$. More precisely, if $(x_{1},y_{1},\dots,x_{n},y_{n})$ are standard symplectic coordinates on $\R^{2n}$ then we think of $T\R^{2n}$ as a complex vector bundle $T'\R^{2n}$ trivialized by $(\partial_{x_{1}},\dots,\partial_{x_{n}})$. In a neighborhood of $C$, the complex line bundle $\det_{\C}(TC)$ is isomorphic to $\det(T'\R^{2n})$ and
\begin{equation}\label{eq:v_0}
v_{0}=\partial_{x_{1}}\wedge\dots\wedge\partial_{x_{n}}
\end{equation}
gives a non-zero section of $\det_{\C}(TC)$. We denote the Conley-Zehnder index of Reeb orbits defined with respect to the trivialization $v_{0}$ by $\CZ$. The following result relates $\CZ$ and $\CZ^{w_{1}}$; it is closely related to \cite[Theorem 3.1]{Viterbo}.  

\begin{Lemma}
	If $\gamma$ is a Reeb orbit in $U^{\ast}C$, if $\CZ$ denotes the Conley-Zehnder index defined with respect to the trivialization $v_{0}$, and if $\CZ^{w_{1}}$ is as in Lemma \ref{l:CZw_1}, then 
	\[ 
	\CZ(\gamma) = \CZ^{w_1}(\gamma)+ W_1(\gamma) - \mu_C(\gamma)=
	\iota(\gamma)-\mu_{C}(\gamma),
	\] 
	where $\mu_{C}(\gamma)$ denotes the Maslov index of the projection of $\gamma$ into $C$. 
\end{Lemma}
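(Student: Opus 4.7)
\emph{Reduction.} Given Lemma~\ref{l:CZw_1}, the two equalities in the statement are equivalent, so it suffices to prove
\[
\CZ(\gamma)-\CZ^{w_{1}}(\gamma)=W_{1}(\gamma)-\mu_{C}(\gamma).
\]
This is a pure change-of-framing computation: the Conley--Zehnder index of a path of symplectic matrices depends on a trivialization of the ambient symplectic bundle only through the induced trivialization of $\det_{\C}$, and changing this trivialization by a loop $g\colon S^{1}\to U(1)$ shifts the Conley--Zehnder index by $-2\deg(g)$. Since both $v$ and $v_{0}$ are nowhere-vanishing sections of $\det_{\C}(TC)$, I write $v=g\cdot v_{0}$ with $g\colon C\to \C^{*}$, and the problem reduces to showing
\[
\deg(g|_{\gamma})=\tfrac{1}{2}\bigl(\mu_{C}(\gamma)-W_{1}(\gamma)\bigr).
\]
Note that the right hand side is an integer because $\mu_{C}(\gamma)\equiv W_{1}(\gamma)\pmod{2}$, both reducing to $\langle w_{1}(TC),\gamma\rangle$.

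\emph{Winding computation.} To compute $\deg(g|_{\gamma})$, I would write $v=e^{i\tilde\alpha(t)}$ in the trivialization $v_{0}$ along $\gamma$, with $\tilde\alpha$ continuous, and choose a continuous lift $\alpha(t)\in\R$ of the Lagrangian angle of $T_{\gamma(t)}C\subset \C^{n}$, where the angle is defined modulo $\pi$. The standard characterization $\mu_{C}(\gamma)=\deg(\det{}^{2}\!\circ d\gamma)$ of the Maslov index gives $\alpha(2\pi)-\alpha(0)=\pi\,\mu_{C}(\gamma)$. Outside a neighbourhood $N(P)$ of $P$, the section $v$ coincides with the unit real section $v'=s/|s|$ of the real line bundle $\det(TC)$, which in the $v_{0}$-trivialization takes the form $\pm e^{i\alpha(t)}$; hence $\tilde\alpha\equiv\alpha\pmod{\pi}$ there. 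Inside each component of $\gamma\cap N(P)$, the construction \eqref{eq:v_triv} smooths the sign jump of $v'$ into a continuous $+\pi$-rotation of $v$. Assembling the contributions, $\tilde\alpha(2\pi)-\tilde\alpha(0)$ equals $\pi\,\mu_{C}(\gamma)$ plus a signed contribution of $\pm\pi$ from each $P$-crossing.

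\emph{Main obstacle: matching signs at $P$-crossings.} The principal technical step will be to identify this signed contribution with $-\pi W_{1}(\gamma)$. For this, one tracks at each transverse crossing of $P$ by $\gamma$ which of the two choices $\pm v''$ serves as $v'$ on either side, and compares the fixed $+\pi$ rotation of $v$ in \eqref{eq:v_triv} with the actual variation of $\alpha$ across $N(P)$ to decide whether the net contribution is $+\pi$ or $-\pi$. The orientation of $\nu_{1}$ used in Lemma~\ref{lem:P_orientable} to define $W_{1}$ is precisely what controls this sign: a positive crossing should contribute $-\pi$ and a negative crossing $+\pi$. Summing gives $\tilde\alpha(2\pi)-\tilde\alpha(0)=\pi(\mu_{C}(\gamma)-W_{1}(\gamma))$, hence $\deg(g|_{\gamma})=\tfrac{1}{2}(\mu_{C}(\gamma)-W_{1}(\gamma))$, completing the reduction. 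This sign bookkeeping is elementary but error-prone, and amounts to a non-orientable refinement of Viterbo's formula \cite[Theorem 3.1]{Viterbo}.
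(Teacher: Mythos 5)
Your proposal takes a genuinely different (and arguably more streamlined) route than the paper's, though the two arguments are structurally parallel and both track the same two kinds of contributions.

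The paper's proof compares the two trivializations by first applying the fiber scaling $\rho_s(x,y)=(s^{-1}x,sy)$, pushing $C$ toward the $x$-plane $\Pi_x$. This forces the real-line field $\ell_v$ to be approximately parallel to $\ell_{v_0}$ away from the caustic (tangency locus with $\Pi_y$), so the relative rotation is localized at two kinds of events: transverse crossings of the Maslov cycle $Z$, which contribute $\mu_C(\gamma)$, and $P$-crossings, where the explicit phase factor in \eqref{eq:v_triv} forces a $\pm\pi$ rotation of $\ell_v$ against the (approximately constant) $\ell_{v_0}$, contributing $W_1(\gamma)$ with the appropriate sign. You instead bypass the scaling entirely: the relative winding of $v$ against $v_0$ is the degree of a $\C^*$-valued transition function, and the contribution $\pi\mu_C(\gamma)$ from the ambient Lagrangian angle $\alpha$ falls out directly from the standard characterization $\mu_C(\gamma)=\deg(\det^2\circ T\gamma)$ of the Maslov index, rather than by counting caustic crossings. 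This is cleaner, and makes the comparison with Viterbo's formula more transparent. The price is that you have to be slightly careful that the $P$-crossing contributions are genuinely $\pm\pi$ jumps in $\tilde\alpha - \alpha$ (which they are: the rotation $e^{i\pi(t+\epsilon)/(2\epsilon)}$ is superposed on the continuous branch $v''$ of the real determinant line, so the jump is exact regardless of what $\alpha$ does inside $N(P)$).

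Your proposal, however, leaves the crucial sign identification at $P$-crossings as an explicitly flagged open step rather than carrying it through. You assert that a positive crossing of $P$ contributes $-\pi$ to $\tilde\alpha$; whether this is $-\pi$ or $+\pi$ depends on the chosen orientation of $\nu_1$ relative to the coordinate $t\in(-\epsilon,\epsilon)$ in the definition \eqref{eq:v_triv}, and this choice must be the \emph{same} one that makes Lemma~\ref{l:CZw_1} (i.e.\ $\CZ^{w_1}=\iota-W_1$) hold. You should either fix a single convention and verify both lemmas with it, or observe that switching the orientation multiplies the transition function $v_{\mathrm{new}}/v_{\mathrm{old}}$ by $e^{-i\pi(t+\epsilon)/\epsilon}$, shifting $\CZ^{w_1}$ by $2W_1^{\mathrm{old}}$ while sending $W_1\mapsto -W_1$, so the formula is convention-independent once one instance of the sign is pinned down. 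To be fair, the paper's own proof is no more explicit at this step (it simply says each $P$-crossing gives a ``$\pm\pi$-rotation'' and appeals to $P$ being dual to $W_1$), so this is less a flaw of your outline than a shared elision; but as a proof proposal it should at least state the orientation convention once and check both signs against it.
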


\begin{proof}
	We need to compare the trivializations $v$ in \eqref{eq:v_triv} and $v_{0}$ in \eqref{eq:v_0} of $\det_{\C}(TC)$ along a given Reeb orbit $\gamma$. Let $\ell_{v}$ and $\ell_{v_{0}}$ denote the fields of real lines in $\det_{\C}(TC)$ spanned by $v$ and $v_{0}$, respectively. To simplify the comparison we use the fact that Conely-Zehnder and Maslov indices are invariant under homotopies and replace $C$ by its image $\rho_{s}(C)$ under fiber scaling of $\R^{2n}$:
	\[ 
	\rho_{s}(x_{1},y_{1},\dots,x_{n},y_{n})= (s^{-1} x_{1}, s y_{1},\dots,s^{-1}x_{n},s y_{n}), \quad s>0.
	\]
	Note that $\phi_{s}^{\ast}\omega_{0}=\omega_{0}$, and that, since $\gamma$ lies in a compact subset of $C$, there is a neighborhood $U$ of $\gamma$ in $C$ such that $\rho_{s}(U)$ lies in an arbitrarily small neighborhood of the subspace $y_{j}=0$, $j=1,\dots,n$, provided $s>0$ is sufficiently small.   
	
	Let $\mathrm{Gr}_{\mathrm{Lag}}(\R^{2n})$ denote the Lagrangian Grassmannian of $\R^{2n}$. Consider the Lagrangian subspaces $\Pi_y=\{x_{j}=0, \ j=1,\dots,n\}$ and $\Pi_{x}=\{y_{j}=0, \ j=1,\dots,n\}$ of $\R^{2n}$, and let $Z \subset \mathrm{Gr}_{\mathrm{Lag}}(\R^{2n})$ denote the Maslov cycle of subspaces that intersect $\Pi_y$ in a subspace of dimension $>0$. After small deformation of $P$, we may assume that the path of Lagrangian tangent planes $TC|_{\gamma}$ along $\gamma$ does not intersect $Z$ in $\gamma\cap P$. Assume now that the scaling parameter $s>0$ is sufficiently small so that $\phi_{s}(TC)$ is approximately equal to the subspace $\Pi_x$ outside the caustic $\Sigma$ of $\phi_{s}(U)$ (the locus of tangencies with the subspace $\Pi_y$). Then each transverse intersection of $TC|_{\gamma}$ with $Z$ of sign $\epsilon=\pm 1$ corresponds to a transverse passage of $\ell_{v}$ through $\ell_{v_{0}}$ of sign $\epsilon$. To see this, note that such an intersection corresponds to a point where the curve $\gamma$ crosses the caustic $\Sigma$ transversely and the tangent space of $TC$ along $\gamma$ is approximately constant in the $(n-1)$ directions tangent to $\Sigma$ and makes a $\pi$-rotation in the complex line normal to $\Sigma$.
	
	Furthermore, the line $\ell_{v''}$, see \eqref{eq:v_triv}, is approximately constant with respect to $\ell_{v_{0}}$ in $N(P)\cap \gamma$, since $\gamma\cap P\cap Z=\varnothing$, and hence $\ell_{v}$ makes a $\pm\pi$-rotation with respect to $\ell_{v_0}$ for each intersection $P\cap\gamma$. Since $P$ is dual to $W_{1}$, adding these two contributions we find that
	\[ 
	\CZ(\gamma) - \CZ^{w_1}(\gamma)= W_1(\gamma) - \mu_C(\gamma),
	\]  
	as claimed.
\end{proof}

\subsection{Dimension formulae and neck-stretching}\label{sec:stretch}
Let $C\subset \R^{2n}_{\st}$ be a Lagrangian submanifold with Legendrian boundary $\Gamma$ and let $\phi(S^n)\subset \R^{2n}_{\st}$ be a Lagrangian sphere with exactly one double point $a$ of grading $|a|=n$. Since $\phi(S^{n})$ is compact, by choosing a sufficiently large ball, we consider $\phi$ as a Lagrangian immersion into $\R^{2n}_{C}$, see Section \ref{sec:neckstretch}. The moduli space $\MM$ of holomorphic disks in $\R^{2n}_{C}$ with boundary on $\phi(S^n)$ and one positive puncture at $a$ is a closed manifold of dimension 
\[ 
\dim(\MM)=|a|-1=n-1.
\]

Below we will consider the behavior of holomorphic disks in this moduli space under neck-stretching around $C$. More precisely, we will take limits of curves as we degenerate the almost complex structure near $C$ in such a way that, in the limit, the ambient $\R^{2n}_{C}$ falls apart in two pieces. The upper piece $X^{+}$ has one positive end as in $\R^{2n}_{C}$ and one negative end, the negative half of the symplectization of the unit conormal bundle $U^{\ast}C$. The lower piece $X^{-}=T^{\ast}C$. SFT-compactness \cite{BEE} describes the limits of holomorphic curves in $\R^{2n}_{C}$ in terms of holomorphic buildings with at least one level in $X^{+}$ and perhaps some levels in $X^{-}$, where the levels are asymptotic to Reeb orbits in $U^{\ast}C$. (The compactness theorem in \cite{BEE} also gives additional levels in the symplectization $\R\times U^{\ast} C$. Here we will simply consider such levels as curves in $X^{-}$, which means that the levels in $X^{-}$ in general consist of broken curves. Since the relevant indices and virtual dimensions are additive, this elision will not lose track of any information that we need.)

Since the pieces of the limiting holomorphic building glue to a disk, each piece is a disk or sphere with punctures. We will thus consider dimensions of the following curves:
\begin{itemize}
	\item Spheres in $T^{\ast}C$ with positive punctures at Reeb orbits in $U^{\ast}C$.
	\item Spheres in $X^{+}$ with negative punctures at Reeb orbits in $U^{\ast} C$.
	\item Disks in $X^+$ with a positive boundary puncture at $a$ and negative interior punctures at Reeb orbits in $U^{\ast}C$.
	\end{itemize}

We consider first a sphere $v$ in $T^{\ast}C$ with positive punctures at Reeb orbits $\gamma_1,\dots,\gamma_k$ of indices $\CZ^{w_1}(\gamma_1)=i_1$, \dots, $\CZ^{w_1}(\gamma_k)=i_k$. We have \cite[Section 1.7]{EGH}
\begin{equation}\label{eq:diminsidesphere}
\dim(v)= 2(n-3) +\sum_{j=1}^{k} (i_j -(n-3)).
\end{equation}

Consider next the case of a sphere $w$ in $X^+$ with no positive puncture and with negative punctures at Reeb orbits $\beta_1,\dots,\beta_r$. We have
\begin{align}\label{eq:end}
\dim(w) &= 2(n-3) - \sum_j(\CZ(\beta_j)+(n-3))\\\notag
&= 2(n-3) - \sum_j(\iota(\beta_j)-\mu_L(\beta_j)+(n-3)).
\end{align}

Similarly, if $u$ in $X^+$ is the component with positive puncture at $a$ and negative punctures at $\beta_1,\dots,\beta_r$ then
\begin{align}\label{eq:top}
\dim(u) &= (n-1) - \sum_j(\CZ(\beta_j)+(n-3))\\\notag
&= (n-1) - \sum_j(\iota(\beta_j)-\mu_L(\beta_j)+(n-3)).
\end{align}

Any punctured holomorphic curve has an underlying somewhere injective curve; for the latter result in the punctured setting see e.g. \cite{DRE}. Standard transversality arguments, perturbing the almost complex structure near an injective point, imply that moduli spaces of somewhere injective holomorphic curves are cut out transversely for generic data. In the proof of Lemma \ref{Lemma:wrongdimension} below, we use such transversality arguments only for curves where monotonicity of the Lagrangian implies that any multiple cover has higher virtual dimension than that of the underlying simple curve. 

Let $\beta \in \pi_2(\R^{2n},\phi(S^n))$ denote the generator of positive area.  As usual, fix an almost complex structure $J$ on $\C^n$ standard in a neighborhood of $\phi(S^n)$.

\begin{Lemma}\label{Lemma:wrongdimension}
	Let $C\subset \R^{2n}_{\st}$ be monotone with minimal Maslov number $\ge 3$. Let $J_t$ denote the sequence of almost complex structures on $\R^{2n}_{C}$ obtained by stretching the neck with parameter $t$ around $C$. If for some Riemannian metric on $C$ the minimal non-zero Morse index of a contractible geodesic loop  is $\geq 3$, then for  $t \gg 0$, no  $J_t$-holomorphic disk with one positive puncture and with boundary on $\phi(S^n)$ in homotopy class $\beta$ intersects $C$.
\end{Lemma}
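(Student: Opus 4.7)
The plan is to argue by contradiction using SFT compactness under the neck-stretch of Section \ref{sec:neckstretch}, together with the dimension formulae of Section \ref{sec:stretch}. Suppose, for a subsequence $t_n\to\infty$, that there exist $J_{t_n}$-holomorphic disks $u_n\in\MM$ intersecting $C$. The monotonicity hypothesis gives a uniform area bound $\int_\beta\omega_{\st}$, and the SFT compactness theorem of \cite{BEHWZ} produces a subsequential limit: a holomorphic building with top level in $X^+$, bottom level in $X^-=T^\ast C$, and matching orbits equal to oriented closed geodesics of $C$. Since $u_n$ meets $C$, at least one non-trivial component of the building must live in $X^-$, so at least $k\ge 1$ matched Reeb orbits appear.

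The first step is a topological Euler-characteristic constraint that pins down the combinatorial type of the limit. Exactness of $T^\ast C$ rules out closed components in $X^-$, so every $X^-$-component is a punctured sphere with only positive ends; the analogous property for $X^+$ forces any sphere bubble in $X^+$ to carry only negative ends. Write $u^{\mathrm{top}}$ for the main disk in $X^+$ with $k_0$ negative ends, $w_1,\dots,w_p$ for the sphere bubbles in $X^+$ with $k_{w,i}$ negative ends each, and $v_1,\dots,v_q$ for the spheres in $X^-$ with $k_j$ positive ends each, so that $k=k_0+\sum_i k_{w,i}=\sum_j k_j$. Since SFT gluing preserves Euler characteristic, the glued surface has
\[
\chi=(1-k_0)+\sum_i(2-k_{w,i})+\sum_j(2-k_j)=1+2(p+q-k),
\]
and requiring $\chi=1$ (recovering a smooth disk in $\MM$) forces $p+q=k$.

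The second step is a dimension count. Because each $v_j$ bounds its own positive ends in $T^\ast C\simeq C$, one has $\sum_i\mu_C(\beta_i)=0$ and $\sum_i W_1(\beta_i)=0$ when summed over all top-level negative ends; combining this with Lemma \ref{l:CZw_1} and the dimension formulae \eqref{eq:diminsidesphere}, \eqref{eq:end}, \eqref{eq:top} gives
\[
\dim(u^{\mathrm{top}})+\sum_i\dim(w_i)+\sum_j\dim(v_j)=(n-1)+2(n-3)(p+q-k)=n-1.
\]
Each of the $k$ matched Reeb orbits contributes a codimension-$1$ constraint in the absence of intermediate symplectization levels, so the stratum in $\overline{\MM}$ parametrizing this combinatorial type has dimension $(n-1)-k$. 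If $k\ge 2$, this is codimension $\ge 2$ and is avoided by a generic one-parameter family of neck-stretched almost complex structures. If $k=1$, then $p+q=1$, forcing $p=0$ and $q=1$, so the unique matched orbit $\beta$ bounds the sphere $v_1$ and is contractible in $C$. The Morse-index hypothesis gives $\iota(\beta)\ge 3$, so by Lemma \ref{l:CZw_1} $\CZ(\beta)=\iota(\beta)\ge 3$, whence
\[
\dim(u^{\mathrm{top}})=(n-1)-(\iota(\beta)+(n-3))=2-\iota(\beta)\le -1,
\]
so no such $u^{\mathrm{top}}$ exists for generic data, producing the desired contradiction.

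The main obstacle will be securing transversality within the neck-stretched almost-complex-structure class: every somewhere-injective moduli space of Floer-holomorphic disks in $X^+$ and of punctured spheres in $X^\pm$ must be cut out transversely, and contributions from multiple covers of curves of negative virtual dimension must be excluded. Monotonicity of $C$ combined with the minimal Maslov bound $\ge 3$ accomplishes the latter, since any nontrivial multiple cover has virtual dimension exceeding its underlying simple curve by a positive multiple of $n-3$ (compare \cite{DRE}). Together with the standard observation that a generic one-parameter family of almost complex structures avoids all codimension-$\ge 2$ strata, this makes the above dimension counts effective and completes the proof.
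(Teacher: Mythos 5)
Your proposal correctly sets up the SFT neck-stretch and the additivity of virtual dimension (summing to $n-1$), and your Euler-characteristic count $p+q=k$ is a nice observation that does not appear in the paper. However, the argument breaks at the assertion that ``each of the $k$ matched Reeb orbits contributes a codimension-$1$ constraint, so the stratum $\ldots$ has dimension $(n-1)-k$.'' In the bifold/neck-stretching setting neither the top level (a subset of $\R^{2n}_C$) nor the bottom level ($T^*C$) carries an $\R$-translation that must be quotiented, so the moduli space of buildings of a fixed combinatorial type has dimension equal to the sum of the component dimensions, which by additivity is exactly $n-1$, \emph{independently of $k$}; matching at a fixed Reeb orbit is not an additional constraint once the punctured-curve dimension formulae are used. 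Consequently your step ``$k\ge 2$ is codimension $\ge 2$ and is avoided by genericity of a one-parameter family'' has no basis, and the core of the lemma — the configurations with spheres in $T^*C$ carrying several positive punctures — is left untreated.

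The paper instead argues entirely within the additivity identity $\sum\dim = n-1$ and non-negativity. It first observes that a sphere in $T^*C$ with a \emph{single} positive puncture has dimension $(n-3)+\iota(\gamma) > n-1$ by the Morse-index hypothesis (this is essentially your $k=1$ case, which is fine), so every inside sphere $v$ must have several positive punctures $\gamma_0,\ldots,\gamma_m$. It then supposes $d_{\rm in}(v)\ge n-2$, so the outside components at the $\gamma_j$ together have dimension $\le 1$; combining $d_{\rm out}^j = (n-3)-\iota(\gamma_j)+m_j \in\{0,1\}$ with $m_j\ge 3$ (from monotonicity and the minimal Maslov bound, by capping off with a half-cylinder on the zero-section) forces $\iota(\gamma_j)-(n-3)\ge 2$ for all $j$, and $\ge 3$ for all but one. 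Plugging back into $d_{\rm in}=2(n-3)+\sum_j(\iota(\gamma_j)-(n-3))$ yields $d_{\rm in}\ge n$, a contradiction, so in fact $d_{\rm in}<n-2$ for every inside sphere. Since the domain is $2$-dimensional and $C$ has codimension $n$ in $T^*C$, the evaluation of all inside components then generically misses $C$. That last step — converting a dimension bound $d_{\rm in}<n-2$ into disjointness of evaluations from $C$ — is also missing from your write-up: you argue for emptiness of the limit building, whereas the paper allows the limit building to exist (with inside spheres) and shows instead that its inside components do not touch the zero-section. To repair your argument you would need to drop the $(n-1)-k$ claim and run the dimension analysis on the individual inside spheres, as the paper does.
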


\begin{proof}
	 The dimension of any curve in the limit is non-negative by transversality, and those  dimensions sum to $n-1$, the dimension of the space of disks in class $\beta$, by additivity of dimension in holomorphic buildings.  If no component of the limit building lies in $T^*C$ the conclusion is immediate, so assume some component is contained in $T^*C$.
	
	The dimension of any sphere with only one positive puncture in $T^{\ast}C$ is
	\[ 
	(n-3) + i > (n-1), 
	\] 
	where $i$ is the Morse index of the contractible geodesic corresponding to the Reeb orbit at the positive puncture.  There must therefore be a sphere with several positive punctures $\gamma_0,\dots,\gamma_m$, $m\ge 1$, inside $T^*C$; the dimension $d_{\rm in}$ of such a sphere is
	\[ 
	d_{\rm in}=2(n-3) + \sum_{j=0}^m (i_{j}-(n-3)).
	\]
	At each $\gamma_j$ there is (a possibly broken) outside sphere with a negative puncture at $\gamma_j$ of dimension $d_{\rm out}^{j}$ given by
	\[ 
	d_{\rm out}^{j}=(n-3)-i_j+m_j,
	\]
	where $m_j$ is the Maslov index of the geodesic corresponding to $\gamma_j$. Gluing the half cylinder on this geodesic with boundary on the zero section $C$ to the punctured sphere in $\R^{2n}_{C}-C$, we obtain a holomorphic disc with boundary on $C$. Monotonicity and the condition on minimal Maslov number then implies that $m_j\geq 3$. Assume now that the dimension $d_{\rm in}\ge n-2$. Then $0\le d_{\rm out}^{j}\le 1$, with right equality for at most one $j$. In particular,
	\[ 
	i_{j}-(n-3)\ge 2, \quad \textrm{with} \quad  i_j - (n-3) \geq 3 \quad \textrm{except  for  one  } j.
	\] 
	Thus 
	\[ 
	d_{\rm in}\ge (n-3)+i_0 + \sum_{j>0} (i_j-(n-3))\ge n,
	\]
	which contradicts $d_{\rm in}\le n-1$.  We conclude that $d_{\rm in} < n-2$, and hence the subset swept by evaluation of holomorphic curves in all  moduli spaces of building components  inside $T^*C$ has total dimension $< n$.  For generic data, this will then be disjoint from the zero-section $C\subset T^*C$.
\end{proof}

\subsection{Proof of Theorem \ref{Thm:embLag}\label{Sec:pf_second_thm}}

Fix $C \subset \R^{2n}_{\st}$ as in the formulation, adjust the Liouville structure near the ideal boundary of $C$ if necessary, and let $X = \R^{2n}_{C}\backslash C$.  From Lemma \ref{lem:pi_n}, and its analogue in the case when $C$ has boundary as discussed in Section \ref{sec:withboundary}, we know that $\pi_n(X)$ is determined as follows:
\[\pi_n(X) = \begin{cases} \Z/2\Z \oplus H_n(X;\Z) & w_2(C) = 0; \\ 
H_n(X;\Z) & w_2(C) \neq 0. \end{cases} \]
		Consider the completed moduli space $\BB$ of disks on $\phi(S^n)$ and the evaluation map $\ev\colon \BB\to \R^{2n}_{C}$. For an almost complex structure sufficiently stretched around $C$, the map $\ev$ takes $\BB$ into $X$. This immediately implies that $\phi(S^n)$ is nullhomologous in $X$, which completes the proof in the case that $w_2(C)\neq 0$.  

If $w_2(C)=0$ then there is an additional $\Z/2\Z$-normal fiber subgroup in $\pi_n(X)$. Since $\phi(S^n)$ represents the trivial class in $H_n(X;\Z)$, the exact sequence of Lemma \ref{lem:pi_n} -- and the fact that the map $\eta$ occuring in that exact sequence  is represented by a map into the $(n-1)$-skeleton --  implies that $\phi$ may be homotoped to land in the $(n-1)$-skeleton of $X$, which is the fiber linking $(n-1)$-sphere $\nu$ of $C$ by Lemma \ref{Lem:Thom_cell}.  

Since $\BB$ is an $(n+1)$-manifold with boundary, it is homotopy equivalent to an $n$-dimensional cell complex.  Suppose first that $\BB$ is simply-connected. Since $\BB$ has dimension $\geq 5$, we can find a Morse function $f\colon \BB \to \R$ with the following two properties:
\begin{itemize}
\item $f$ achieves its maximum along the boundary $\partial \BB \cong S^n$, and has outward-pointing gradient vector field along the boundary;
\item $f$ has no index $1$ or index $n$ critical points.
\end{itemize}
In this case, $\BB$ admits a cell structure with cells of dimension $\leq n-1$, and one can homotope the evaluation map $\ev\colon \BB \to X$ into the $(n-1)$-skeleton $\nu$.  The preimage of a regular value in $\BB$ gives a framed cobordism which by the Pontryagin-Thom construction shows that the map is homotopically trivial exactly as in Section \ref{ssec:pfthmyasha}.  

If $\BB$ is not simply connected, we may modify it by surgery, as in the proof of Corollary \ref{Cor:yasha} in Section \ref{Subsec:Pf_yasha}. Pick a collection of pairwise disjoint embedded loops $\gamma_i \subset \BB$ representing a generating set for (the finitely generated group) $\pi_1(\BB)$. We may assume that the evaluation map $\ev$ is an embedding in a neighbourhood of each $\gamma_i$. Since $\BB$ is spin, it is in particular orientable, so each of the loops $\gamma_i$ has trivial normal bundle, and an open neighborhood $U(\gamma_i) \cong S^1 \times D^n$. We claim that we can do surgery, cutting out these neighborhoods and re-gluing $2$-handles $D^2 \times S^{n-1}$, in such a way that the following two conditions hold:
\begin{itemize}
\item[$(i)$] the spin-structure of $T\BB$ extends  across the $2$-handles, to give a spin manifold $\BB'$ with $\partial \BB' \cong \partial \BB = S^n$;
\item[$(ii)$]the evaluation map $\ev\colon \BB \to X$ extends across the cobordism between $\BB$ and $\BB'$ to define an evaluation map $\ev\colon \BB' \to X$.
\end{itemize}
For the first point, the only obstruction to extending a spin structure over a $2$-handle in $\BB'$ is that the surgery circle should carry the bounding framing (equivalently, be compatible with the bounding spin structure).  This can be ensured in the construction of $\BB$, from Remark \ref{Rem:spin_bound}.  Since $X$ is simply connected, the loops $\ev(\gamma_i) \subset X$ bound disks, which for dimension reasons we may assume,  after small perturbation, are embedded and pairwise disjoint.  Fixing such disks, and picking trivializations of their normal bundles to model the surgery, one obtains an extension of $\ev$ over the 2-handles. This returns us to the case of simply connected $\BB$ treated previously. The theorem follows.
\qed

\section{Whitney sphere links}

\subsection{Relating invariants\label{Sec:Relating}} Let $n\geq 3$. For a Whitney sphere link, we claim the Hopf linking number, defined as an element in $[S^n \times S^n, S^{2n-1}] \cong \pi_{2n}(S^{2n-1}) = \Z/2\Z$, and the  $\Z/2\Z$-summand of the homotopy class defined by one component in the complement of the other, always agree.  

Consider the Lagrangian submanifold $i\colon C \hookrightarrow\R^{2n}_{\st}$ obtained by surgery on a Whitney sphere $\iota\colon S^n \to \R^{2n}_{\st}$ (i.e., one with a single double point). Then  
\begin{equation}\label{eq:surgeredhmtpy}
\pi_n(\R^{2n}_{\st}\setminus C)= 
\begin{cases} 
\Z/{2}\Z\oplus \Z & \text{if $n$ is even,}\\ 
\Z/2\Z & \text{if $n$ is odd.} 
\end{cases}
\end{equation}

In both cases the $\Z/2\Z$-summand is carried by a fiber $(n-1)$-sphere $\nu$ in the normal bundle of $C$. Consider a map $\phi\colon S^n \to \R^{2n}_{\st}\setminus C$ which is disjoint from $\iota(S^n)$, and which represents zero in the $\Z$-summand above in the even-dimensional case.

\begin{Lemma} \label{lem:relating}
The map $\phi$ represents the non-trivial element in $\Z/2\Z \subset\pi_n(\R^{2n}_{\st}\setminus C)$ if and only if the Hopf linking number of the link $\iota \sqcup \phi\colon S^{n}\sqcup S^{n}\to \R^{2n}$ is non-trivial.
\end{Lemma}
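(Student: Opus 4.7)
The approach is to identify both $\bZ/2\bZ$-invariants through a common reference model and then extend by additivity. I plan to construct an explicit sphere $\phi_0$ that generates the fibre $\bZ/2\bZ$-summand of $\pi_n(\R^{2n}_{\st}\setminus C)$, compute its Hopf linking with $\iota$ and verify that it equals $1$, and then promote both invariants to group homomorphisms that agree on $\phi_0$.

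For the first step, I would pick a smooth point $p=\iota(x_0)$ of $\iota(S^n)$ lying outside the small ball in which the Lagrange surgery producing $C$ takes place. In Weinstein-Darboux coordinates around $p$, identify a neighborhood of $p$ in $\iota(S^n)$ with the zero-section of $T^\ast\R^n$, and let $\nu\subset T^\ast_0\R^n$ denote the fibre $(n-1)$-sphere of small radius $\epsilon$, chosen so that $\nu$ is disjoint from both $C$ and $\iota(S^n)$. The composite
\[
\phi_0\colon S^n\xrightarrow{\;\eta_n\;}S^{n-1}=\nu\hookrightarrow \R^{2n}_{\st}\setminus\bigl(C\cup\iota(S^n)\bigr),
\]
in which $\eta_n$ is the $(n-2)$-fold suspension of the Hopf map $S^3\to S^2$, realizes the generator of the fibre summand in \eqref{eq:surgeredhmtpy}.

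For the second step, I would compute the Gauss map $\mathcal{G}\colon S^n\times S^n\to S^{2n-1}$ of $\iota\sqcup\phi_0$ explicitly. Outside a bidisk neighborhood $V\times S^n$ of $\{x_0\}\times S^n$, the map factors up to homotopy through $x\mapsto(\iota(x)-p)/|\iota(x)-p|$ into $S^{2n-1}$, and the latter is nullhomotopic rel basepoint because $\pi_n(S^{2n-1})=0$ for $n\geq 2$. Within $V\times S^n$, the chosen coordinates make the Gauss map equal to
\[
(x,y)\ \longmapsto\ \frac{(x,-\epsilon\eta_n(y))}{\bigl|(x,-\epsilon\eta_n(y))\bigr|},
\]
and after collapsing the complement of $V\times S^n$ to a basepoint, one obtains a map on the smash $S^n\wedge S^n=S^{2n}$ that, up to reparameterization, is the unreduced suspension of $\eta_n$. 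This represents the non-trivial element of $\pi_{2n}(S^{2n-1})=\bZ/2\bZ$, so $\lk(\iota\sqcup\phi_0)=1$.

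For the third step, both invariants are additive under the pinch sum on $\pi_n$: the fibre-summand class manifestly, and the Hopf linking because $\phi_1+\phi_2$ factors the Gauss map through $S^n\times(S^n\vee S^n)$, with class in $\pi_{2n}(S^{2n-1})$ equal to $\lk(\iota\sqcup\phi_1)+\lk(\iota\sqcup\phi_2)$. Both quantities therefore define homomorphisms on the subgroup of $\pi_n(\R^{2n}_{\st}\setminus C)$ consisting of classes vanishing on the $\bZ$-summand, and they agree on the single generator $\phi_0$ by Step~2, proving the lemma. The main obstacle will be Step~2, identifying the collapsed local model with the non-trivial suspension of $\eta_n$ rather than with the trivial class; this amounts to a careful Hopf-invariant bookkeeping with signs and normalizations. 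A secondary subtlety is that a homotopy in $\R^{2n}_{\st}\setminus C$ may cross $\iota(S^n)$, so one must verify that the framed-bordism description of Hopf linking is unaffected by such crossings when the initial and terminal spheres are disjoint from $\iota(S^n)$; this reconciliation is what makes the descent of $\lk(\iota\sqcup -)$ to a homomorphism on $\pi_n(\R^{2n}_{\st}\setminus C)$ well-defined in Step~3.
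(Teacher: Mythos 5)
Your strategy is close in spirit to the paper's, but there is a genuine gap in Step~3 that you flag as a ``secondary subtlety'' but that is in fact the crux of the lemma, and the rest of your argument does not close it.

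The issue is well-definedness of $\lk(\iota\sqcup -)$ as a function on (a subgroup of) $\pi_n(\R^{2n}_{\st}\setminus C)$. A homotopy of $\phi$ in $\R^{2n}_{\st}\setminus C$ is only constrained to avoid $C$, not $\iota(S^n)$. Since $\iota(S^n)\setminus C$ consists of the two small disks $D_1\cup D_2$ (of codimension $n$ in $\R^{2n}\setminus C$), and the trace of a homotopy has dimension $n+1$, a generic homotopy crosses $D_1\cup D_2$ along a $1$-dimensional locus and there is no dimension reason these crossings can be removed. Each transverse crossing can change the Hopf linking number of $\iota\sqcup\phi_t$; indeed for Whitney links this is exactly the mechanism by which links with non-trivial Hopf linking become unlinked (cf.\ the Fenn--Rolfsen examples cited in the paper). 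So without a further argument, $\lk(\iota\sqcup -)$ is not a priori a homomorphism, or even a well-defined map, on $\pi_n(\R^{2n}_{\st}\setminus C)$, and your additivity argument in Step~3 does not get started.

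The paper closes exactly this gap. Rather than trying to show $\lk(\iota\sqcup -)$ descends to $\pi_n(\R^{2n}_{\st}\setminus C)$ directly, the paper first establishes, by a geometric localization, that $\lk(\iota\sqcup\phi)$ coincides with the analogous invariant $\lk(i\sqcup\phi)$ built from the Gauss map $\scrG_{i\sqcup\phi}\colon C\times S^n\to S^{2n-1}$. The latter \emph{is} manifestly invariant under homotopies of $\phi$ in $\R^{2n}_{\st}\setminus C$, since the Gauss map stays defined along such a homotopy. The identification $\lk(\iota\sqcup\phi)=\lk(i\sqcup\phi)$ is achieved by first applying a Hamiltonian isotopy that flattens the Whitney sphere near its double point, so that the surgery region $N(K)$ and the two replacement disks $D_1,D_2$ lie arbitrarily close to $p$; then the Gauss image of $(D_1\cup D_2)\times S^n$ and of $N(K)\times S^n$ is pinned to a small neighborhood of the $n$-dimensional set $\scrG_{\iota\sqcup\phi}(\{p\}\times S^n)$, so one may choose a generic direction $u\in S^{2n-1}$ whose preimages under both $\scrG_{\iota\sqcup\phi}$ and $\scrG_{i\sqcup\phi}$ avoid those exceptional regions and hence literally coincide as framed $1$-manifolds in the common part of $\iota(S^n)$ and $C$. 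Only after this reduction does the paper invoke a representative $\psi\colon S^n\to\nu$ of the $\Z/2$-class and observe that the preimage of a generic $u$ under $\scrG_{i\sqcup\psi}$ is the framed Hopf fiber of $\psi$, which is null precisely when the Hopf invariant of $\psi$ vanishes.

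Your Steps~1 and~2 are in effect a special case of the paper's local computation when $\phi=\phi_0$ already sits in a fiber sphere, and they are fine in spirit (one has to be a little careful with the collapse argument, but that is a routine point). To repair Step~3 you would need something equivalent to the paper's identification $\lk(\iota\sqcup\phi)=\lk(i\sqcup\phi)$; once you have that, additivity is no longer needed, since $\lk(i\sqcup -)$ depends only on the homotopy class of $\phi$ in $\R^{2n}_{\st}\setminus C$ and the lemma follows from the local computation together with the observation that the framed bordism class is determined inside a ball.
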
 

\begin{proof}
Via a Hamiltonian isotopy supported in a small neighborhood of the double point, we take a model of $\iota(S^n)$ which is very flat near the double point.  More precisely, consider the standard model 
\[ 
\{x+iy \in \bR^n \oplus i\bR^n, \, |x|^2+y^2=1\} 
\]
of the Whitney sphere, where the preimages of the double point are $(0, \pm 1)$, and rescale the factors $\bR^n$ and $i\bR^n$ by $\epsilon^{-1}$ respectively $\epsilon$.  
Via this flattened model, we find a small $(n-1)$-sphere $K \subset C$, with a neighborhood $N(K) \cong  K \times (-\delta,\delta) \subset C$ such that $\iota(S^n)$ is obtained by removing $N(K)$ and adding two small flat disks $D_1, D_2$ intersecting at one point along the boundary of $C$. Critically, both $N(K)$  and  $D_1 \cup D_2$ can be taken to lie arbitrarily close to the double point $p \in \iota(S^n)$.  It follows that under the Gauss maps
	\[
	\scrG_{\iota\sqcup\phi}: S^n \times S^n \to S^{2n-1}, \quad \scrG_{i \sqcup \phi}\colon C \times S^n \to S^{2n-1}
	\]
	we can find an open neighbourhood $U$ of points in $S^{2n-1}$ whose preimage is disjoint from $D_1 \cup D_2$ respectively  $N(K)$, since the latter subsets each have Gauss image in an arbitrarily small (depending on the flattening parameter $\varepsilon$) neighborhood of the $n$-dimensional subset $\scrG_{\iota\sqcup\phi}(\{p\} \times S^n)$.   
	
Suppose now $\phi$ defines the non-trivial element in $\Z/2\Z$, so $\phi$ is homotopic in $\R^{2n}_{\st}\setminus C$ to a map  $\psi\colon S^n \to \nu$ with non-zero Hopf invariant. The Gauss map
\[ 
\Gamma\colon C\times \nu \to S^{2n-1},
\]
which takes a pair of points to the unit vector of the oriented line segment connecting them,
has degree $\pm 1$, and, provided $\nu$ is sufficiently small, for a generic point $u\in S^{2n-1}$, $\Gamma^{-1}(u) = (q,n) \in C\times \nu$ will comprise a single point.  We pick $u$ in the open subset $U$ above. For the Gauss map $\scrG_{i \sqcup \psi}$, the preimage 
	\[
	\scrG_{i \sqcup \psi}^{-1}(u) \, = \, (q, \psi^{-1}(n))
	\]
	of the point $u$ is exactly the framed knot fiber of $\psi$ over the point $n$, equipped with the framing it carries from $\psi$.  Furthermore, since $u \in U$, the discussion of the first paragraph shows that this framed knot lies in a ball in $S^n \times S^n$.  This shows that the Hopf linking invariant of $\iota \sqcup \psi \simeq \iota \sqcup \phi$ is also non-trivial. The choice of $u \in U$ ensures that the preimages of $\iota\sqcup \phi$ and $i\sqcup \phi$ can be identified, so the lemma follows.
\end{proof}

\subsection{Proof of Theorem \ref{Thm:whitney_link}}
Suppose we have a Whitney sphere link $\iota\colon S^n_a \sqcup S^n_b \to \R^{2n}$, $n>4$, with each component having a single  double point of Maslov index $n$. Let $C$ denote the Maslov index $n$ surgery of $S^n_b$. If $n$ is even then $C\approx S^{1}\times S^{n-1}$ and if $n$ is odd then $C$ is diffeomorphic to the non-orientable $S^{n-1}$-bundle over $S^{1}$.

In either case, $C\subset\R^{2n}_{\st}$ is a monotone Lagrangian submanifold of minimal Maslov index $n$. Equipping the sphere bundle with the standard metric on the sphere factor (which is invariant under the monodromy map, which we can take to be an isometry in both cases) we find that the minimal index of a non-constant contractible geodesic equals $n-2\ge 3$. In particular, we find by Theorem \ref{Thm:embLag} that the map $\iota\colon S^n_a \to \R^{2n}_{\st}\setminus C$ is homotopic to a constant map in the complement of $C$. In particular, in the even-dimensional case, $\iota(S^n_a)$ represents zero in the $\Z$-summand of $\pi_n(\R^{2n}_{\st} \setminus C)$.  This satisfies the hypotheses of Lemma \ref{lem:relating}, which implies the result. 


\subsection{Whitney links in four-dimensional space}  Let $\iota_1 \sqcup \iota_2\colon S^2 \sqcup S^2 \to \R^4$ be a Whitney link.  Kirk \cite[Section 6]{Kirk} proves that in this dimension, the Hopf linking number can be understood as follows. Let $\gamma$ be a loop on the first component which generates $\pi_1(\iota_1(S^2))$. Then 
\begin{equation} \label{eqn:Kirk}
\mathrm{Hopf}(\iota_1,\iota_2) = 0 \, \Leftrightarrow \, [\gamma] = 0 \in H_1(\R^{4} \backslash \iota_2(S^2);\Z)
\end{equation}
(this is the content of his formula for $\sigma_+$ on p.~685 of \emph{op. cit.} when there is a unique double point on each component). Using this characterization, we show:

\begin{Proposition} \label{Prop:2dim}
A Lagrangian Whitney sphere link in $\R^{4}$ has trivial Hopf invariant.
\end{Proposition}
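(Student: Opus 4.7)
The plan is to combine Kirk's criterion with Lagrangian surgery and the Nemirovski--Shevchishin theorem. By Kirk's formula \eqref{eqn:Kirk}, it suffices to show that a generator $\gamma$ of $\pi_1(\iota_1(S^2)) \cong \bZ$ is null-homologous in $H_1(\bR^4 \setminus \iota_2(S^2);\bZ) \cong \bZ$, or equivalently that $\gamma$ has zero linking number with $\iota_2(S^2)$.

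The first step is to perform Lagrangian surgery on $\iota_1$ at its unique transverse double point inside a small Darboux ball, which by compactness can be chosen disjoint from $\iota_2(S^2)$. The output is a closed embedded Lagrangian surface $L \subset \bR^4 \setminus \iota_2(S^2)$ with $\chi(L) = 0$. By the theorem of Nemirovski--Shevchishin, there is no embedded Lagrangian Klein bottle in $\bR^4_{\st}$, so $L$ must be orientable and is therefore a Lagrangian torus. Under the surgery, the class of $\gamma$ corresponds to one generator $e_\gamma \in H_1(L) \cong \bZ^2$, namely the core loop of the new handle; the other generator is a small meridian of the surgery neck, which bounds an embedded disk inside the surgery Darboux ball and therefore represents zero in $H_1(\bR^4 \setminus \iota_2(S^2))$.

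It thus remains to show that $[e_\gamma] = 0 \in H_1(\bR^4 \setminus \iota_2(S^2)) \cong \bZ$. The higher-dimensional neck-stretching techniques used in the proof of Theorem \ref{Thm:embLag} are not available here, because when $n=2$ the moduli space $\FF_2$ of doubly punctured Floer disks has non-negative virtual dimension (compare Remark \ref{Rmk:3dim}). My plan would be to perform the analogous Lagrangian surgery on $\iota_2$, obtaining a second Lagrangian torus $L' \subset \bR^4 \setminus L$, and then to argue that the linking number between two disjoint embedded Lagrangian tori in $\bR^4_{\st}$ vanishes modulo $2$. The main obstacle is this final vanishing, which I would address by constructing an explicit bounding $3$-chain for $L'$ disjoint from $L$ using the local model of the Lagrangian surgery together with the fact that both $L$ and $L'$ are null-homologous in $\bR^4$, or alternatively by a direct Gauss-map calculation in the spirit of the proof of Lemma \ref{lem:relating}, exploiting that the surgery replaces each double point by a standard local model whose contribution to linking is easily seen to cancel.
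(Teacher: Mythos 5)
Your reduction via Kirk's criterion is correct and matches the paper's starting point, but the core of the argument is missing.

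There is a genuine gap in the final step. First, a structural point: the surgery on $\iota_1$ and the appeal to Nemirovski--Shevchishin do no work here, since the class of $\gamma$ in $H_1(\R^4\setminus\iota_2(S^2))$ is unaffected by modifying $\iota_1$ in a small Darboux ball; this just re-labels the same linking number you must compute. More seriously, the ``linking number between two disjoint embedded Lagrangian tori in $\R^4$'' is not a well-defined numerical invariant: two $2$-cycles in $\R^4$ link in the wrong codimension (a Seifert $3$-chain for $L'$ meets $L$ in a $1$-cycle, not a signed point count). What one actually needs is the linking number of the $1$-cycle $\gamma$ with the $2$-cycle $\iota_2(S^2)$ (equivalently with $L'$), and showing this vanishes is precisely the whole problem. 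Your two proposed routes --- ``construct an explicit bounding $3$-chain for $L'$ disjoint from $L$'' and ``a direct Gauss-map calculation'' --- are both restatements of that goal rather than proofs: if the Hopf invariant were nonzero, no such $3$-chain could exist, and the Gauss-map contributions do not obviously cancel globally just because the double-point models are local.

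You are also slightly mistaken about neck-stretching: the paper's proof \emph{does} stretch the neck around the Lagrangian torus $L$ obtained by surgery on $\iota_2$. What fails in dimension $n=2$ is the index argument that would force holomorphic disks to be literally disjoint from $L$; the fix is to aim only for the weaker homological statement that a $J$-holomorphic disk $D$ with one positive puncture and boundary on $\iota_1(S^2)$ (which exists for every taming $J$ by the Gromov--Floer displacing argument) has vanishing \emph{algebraic} intersection with $L$. This is where the symplectic input enters: one stretches around $L$, uses the flatness of the torus so that the sum of Reeb-orbit asymptotics of any inside building component is zero in $H_1(U^*L;\Z)$ (Welschinger's trick), caps those asymptotics with $2$-chains in $U^*L$, and concludes that $D\cdot[L]$ equals a multiple of $[L]\cdot[L] = -\chi(L) = 0$. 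None of this machinery appears in your proposal, and without it (or a genuine replacement) there is no argument.
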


\begin{proof}
Note that the Maslov index of a Lagrangian Whitney sphere is necessarily $2$ in this dimension.  We consider as usual holomorphic disks with one positive puncture in the generating class $\beta \in \pi_2(\R^4, \iota_1(S^2))$ for the first component.  From the usual Gromov-Floer displacing argument, it follows that the moduli space of such discs is non-empty for every taming $J$.  From \eqref{eqn:Kirk}, to prove that the Hopf linking number vanishes, it suffices to prove that such a $J$-holomorphic disk $D$ on $\iota_1(S^2)$ has trivial algebraic intersection number with the second component $\iota_2(S^2)$.  Performing a Lagrange surgery on $\iota_2(S^2)$ whose trace is disjoint from $\iota_1(S^2)$, we can replace the second Whitney sphere by an embedded Lagrangian torus $L$, and it suffices to prove that the holomorphic disc $D \in H_2(\bC^2, \iota_1(S^2))$ has trivial homological intersection number with $L \subset \R^{4}\backslash \iota_1(S^2)$.
 
We now employ a simple trick due to Welschinger \cite{Welschinger}.  Stretch the neck along the boundary of a small disk cotangent bundle $U^*L$ as usual.  The holomorphic disk $D$ will break into various components, say $E_1,\ldots, E_l$ inside $T^*L$, and $E_1',\ldots,E_j'$ in $\R^4\backslash L$, where $E_1'$ has boundary on $\iota_1(S^2)$ and all the other components are punctured spheres. The projection $p_*\colon H_1(U^*L;\Z) \to H_1(L;\Z)$ is an isomorphism when restricted to the subspace $\mathcal{R} \subset H_1(U^*L)$ spanned by Reeb orbits for the Reeb flow associated to the flat metric on $T^2$, see \cite{Welschinger}. For each component $E_i \subset T^*L$, the total boundary $R_i$ of $E_i$ is nullhomologous (via $E_i$ itself) in $T^*L$, hence the projection $p_*(R_i) \in H_1(L;\Z)$ of $R_i$ to the zero-section $L$ is nullhomologous. The compactness theorem in SFT shows that $R_i$ is a union of Reeb orbits and hence $R_i \in \mathcal{R}$, so as $p_*\colon \mathcal{R} \to H_1(L;\Z)$ is an isomorphism,  we deduce $R_i = 0\in H_1(U^*L;\Z)$.  We may therefore choose 2-chains $F_i \subset U^*L$ inside the spherical cotangent bundle with boundary $R_i$, for each $i$. Adding and subtracting these two-chains to the union of building components obtained from stretching $D$, we see that $D$ is homologous in $H_2(\R^{4}, \iota_1(S^2))$ to the sum 
\[
D \sim \, \left(\bigcup E_i \cup F_i\right) \cup \left(\bigcup E_j' \cup (-F_i)\right)
\]
where the final minus sign denotes orientation reversal.  The first term above is a closed 2-cycle in $T^*L$, which is therefore homologous to  some multiple of the zero-section $[L]$ itself, and has trivial self-intersection with $[L]$ (as $L$ has trivial Euler characteristic). The second term is a two-chain which is wholly contained in $\R^{4}\backslash L$, so obviously has trivial intersection with $[L]$. Therefore $D \cdot [L] = 0$, as required.
\end{proof}

\section{Linked examples}  \label{Sec:Linked}
Examples illustrate the necessity of the various hypotheses in Theorems \ref{Thm:embLag} and consequently in Theorem \ref{Thm:whitney_link}. Recall that $S_k \subset \R^n$ denotes a point, if $k=0$, or a sphere $S^k$ if $k>0$.

\begin{Lemma} For any non-trivial element  $\eta \in \pi_n(S^{n-k-1})$, there is a totally real embedding $\bR^n \hookrightarrow T^*(\bR^n\setminus S_k)$, co-inciding with a fixed cotangent fiber $T_x^*\R^{n}$ near infinity, for which the projection to the zero-section defines $\eta$. 
\end{Lemma}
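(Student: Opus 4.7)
The plan is to invoke Gromov's h-principle for totally real embeddings of open manifolds, the point being that total reality is merely an open differential relation (unlike the Lagrangian condition) and therefore satisfies an h-principle in both its absolute and relative forms.

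First, choose a smooth map $f\colon \R^n \to \R^n \setminus S_k$ which is constant with value $x$ outside a large ball $B_R$ and whose induced map $S^n = \R^n/(\R^n\setminus B_R) \to \R^n\setminus S_k \simeq S^{n-k-1}$ realizes the prescribed class $\eta$. Near infinity the cotangent-fiber embedding supplies a genuine totally real (in fact Lagrangian) solution covering $f|_{\R^n \setminus B_R}$. To extend this to formal data over all of $\R^n$, one must produce a bundle monomorphism $F\colon T\R^n \hookrightarrow f^{\ast}T(T^{\ast}(\R^n\setminus S_k))$ whose pointwise image is a totally real $n$-plane and which agrees with the fiber datum near infinity. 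Since $\R^n\setminus S_k$ is an open subset of $\R^n$, its tangent bundle is trivial, and the complexified bundle $f^{\ast}(T(\R^n\setminus S_k)\otimes\C)$ is a trivial $\C^n$-bundle over $\R^n$. The obstruction problem then reduces to extending a map $\R^n \setminus B_R \to GL_n(\C)/GL_n(\R)$ over the contractible space $\R^n$, which is unobstructed.

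Second, apply the relative parametric h-principle for totally real embeddings of open manifolds (Gromov, \emph{Partial Differential Relations}, Section 2.4.5; see also Eliashberg--Mishachev, \emph{Introduction to the h-Principle}, Ch.\ 19). The differential relation defining totally real $n$-jets in a complex $n$-manifold is open and diffeomorphism-invariant, and the formal solution above is already holonomic on a neighbourhood of the complement of $B_R$. The h-principle then produces a genuine totally real immersion $\iota\colon \R^n \to T^{\ast}(\R^n\setminus S_k)$ agreeing with the fiber $T_x^{\ast}\R^n$ outside a compact set. A generic $C^{0}$-small perturbation supported inside $B_R$ (using $n < 2n$) upgrades $\iota$ to an embedding while staying inside the open class of totally real maps. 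By construction $\pi\circ\iota$ is homotopic rel infinity to $f$, and hence represents $\eta$.

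The only delicate point is ensuring the relative form of the h-principle leaves the map exactly equal to the fiber near infinity; this is automatic for open relations as soon as the formal solution coincides with a genuine one on an open neighbourhood of the closed complement of $B_R$.
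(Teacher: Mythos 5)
The strategy is essentially the one the paper uses: invoke an $h$-principle for an open, Lagrangian-like condition, in its relative (``extension'') form so as to keep the map equal to the cotangent fiber near infinity. The paper cites the $\epsilon$-Lagrangian embedding $h$-principle, whereas you go through the closely related totally real condition; either delivers a totally real embedding. Your setup of the formal data (the underlying map projecting to a representative of $\eta$, and a bundle monomorphism with totally real image that extends the constant Gauss datum of the fiber across the ball, which is unobstructed since the restriction to $\partial B_R$ is constant) is correct in spirit, modulo the cosmetic point that bundle monomorphisms with totally real image are classified by $GL_n(\C)$ rather than $GL_n(\C)/GL_n(\R)$.

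The gap is in the final step. You apply the $h$-principle to obtain a totally real \emph{immersion}, and then assert that a generic $C^0$-small perturbation supported in $B_R$, ``using $n<2n$,'' upgrades it to an embedding. This is false: the domain has dimension $n$ and the target has dimension $2n$, so the self-intersection locus of a generic smooth map is $0$-dimensional, and isolated transverse double points are \emph{stable} under small perturbation. The inequality $n<2n$ is the Whitney \emph{immersion} range; the embedding range would require target dimension at least $2n+1$, which fails here. So as written the last step does not produce an embedding.

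The fix is immediate and is in fact contained in the very references you cite: the $h$-principle you want is the one for totally real \emph{embeddings} of open manifolds (Gromov, or Eliashberg--Mishachev, Theorem 19.4.1), which applies when either the complex codimension is positive or the source is open --- the latter being our case with $V=\R^n$ open and $\dim_\C W = n$. That theorem produces a totally real embedding directly in the prescribed formal homotopy class rel infinity; no immersion-to-embedding perturbation step is needed or available. Replace ``immersion'' by ``embedding'' in the invocation of the $h$-principle and delete the perturbation sentence, and the argument is complete.
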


\begin{proof}
This follows from the $h$-principle for $\epsilon$-Lagrangian embeddings (more precisely, for the ``extension" form of that $h$-principle, since we work relative to fixed data near infinity), cf. \cite{Gromov:PDR}.
\end{proof}

More strikingly, without constraints on the Maslov index of the double point, non-trivial Lagrangian Whitney sphere links do exist.  

\begin{Lemma} \label{Lem:whitney_link}
	If $n \geq 3$ is odd, there is a Lagrangian Whitney sphere link with non-vanishing Hopf linking number.
\end{Lemma}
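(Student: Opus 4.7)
The plan is to build such a link directly from the Lagrangian caps $h$-principle of Eliashberg--Murphy \cite{EM}, starting from a smoothly non-trivial link of loose Legendrians in a contact sphere.

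First, I would fix a round ball $B \subset \R^{2n}_{\st}$ and choose two disjoint Legendrian $(n-1)$-spheres $\Lambda_1, \Lambda_2 \subset \partial B = S^{2n-1}$ that are individually loose and whose underlying smooth link has non-trivial Hopf linking number in $\pi_{2n-2}(S^{2n-2}) = \Z$. Since $n\geq 3$, looseness of each $\Lambda_i$ individually is compatible with an arbitrary ambient smooth isotopy class, by the Murphy loose $h$-principle; the linking of the components is a topological condition imposed on the link of their smooth images and can be realized by an explicit construction modeled on the standard Hopf link in $S^3$ (e.g.\ using a suitable sphere bundle structure on $S^{2n-1}$ and two nearby fibers, then stabilizing to make each Legendrian loose).

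Second, I would fill $\Lambda_i$ on the inside by a Lagrangian disk $D_i^- \subset B$; since each $\Lambda_i$ is formally a standard Legendrian unknot, such a filling can be arranged within $B$. Simultaneously I would apply the Eliashberg--Murphy Lagrangian caps theorem \cite{EM} outside $B$, producing an immersed Lagrangian cap $D_i^+ \subset \R^{2n}_{\st} \setminus \mathrm{int}(B)$ asymptotic to $\Lambda_i$ at $\partial B$ and carrying exactly one transverse self-intersection whose Maslov index is determined by the chosen formal Lagrangian data, and can be arranged to be strictly less than $n$ (so as not to fall under the hypothesis of Theorem \ref{Thm:whitney_link}). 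The parity constraint that $n$ be odd enters here because the algebraic self-intersection number of a Lagrangian immersion $S^n \to \R^{2n}_{\st}$ equals (up to sign) the Euler number of the normal bundle, which vanishes precisely for odd $n$; only in this case can one produce a cap with a single geometric double point.

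Third, I would set $\hat L_i = D_i^- \cup D_i^+$ and form $\hat\iota = \hat L_1 \sqcup \hat L_2$. By construction, $\hat\iota$ is a Lagrangian Whitney sphere link. The caps $D_i^{\pm}$ can be made $C^0$-close to continuous disk fillings of $\Lambda_i$ on either side of $\partial B$, so the Gauss map $\scrG_{\hat\iota}\colon S^n \times S^n \to S^{2n-1}$ is homotopic to the Gauss map of such a continuous disk-filled link, which by a Pontryagin--Thom argument (as in the proof of Lemma \ref{lem:relating}) has the same $\Z/2\Z$ reduction as the Hopf linking number of the boundary link $\Lambda_1 \sqcup \Lambda_2$. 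The latter is non-trivial, hence so is $\lk(\hat\iota) \in \Z/2\Z$.

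The main obstacle in this plan is the first step: arranging two loose Legendrian $(n-1)$-spheres in $S^{2n-1}$ with non-trivial Hopf linking, with the additional formal Lagrangian compatibility that guarantees the Eliashberg--Murphy caps in the second step each contribute exactly one transverse double point of Maslov index $\ne n$. This reduces to purely homotopy-theoretic bookkeeping that admits a solution when $n$ is odd, and that is incompatible with single-double-point caps when $n$ is even.
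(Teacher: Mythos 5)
Your proposal shares with the paper the key reliance on the Eliashberg--Murphy Lagrangian caps theorem \cite{EM} and on Lemma \ref{lem:relating}, but the construction differs in a way that introduces a genuine gap.

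The gap is in your first and second steps. You start with two disjoint loose Legendrian spheres $\Lambda_1,\Lambda_2 \subset \partial B$ with nontrivial smooth Hopf linking and then claim that each $\Lambda_i$ bounds an \emph{embedded} Lagrangian disk $D_i^- \subset B$, justified by the assertion that $\Lambda_i$ is ``formally a standard Legendrian unknot.'' Formal data does not control the existence of an actual Lagrangian filling: there is no $h$-principle for embedded Lagrangian fillings, and a loose Legendrian sphere need not bound any embedded exact Lagrangian disk inside the ball at all. Indeed, the whole point of \cite{EM} is that loose Legendrians admit Lagrangian \emph{caps} on the concave/non-compact side, not fillings in a ball. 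The paper's proof is specifically structured to avoid this issue: it keeps the \emph{standard} Whitney sphere $w(S^n)$ as one component (so that Lemma \ref{lem:relating} applies directly against its surgery $L$), and for the other component it starts from the standard Lagrangian disk near a chosen point $p$ bounded by the standard Legendrian unknot, builds an explicit \emph{cobordism with exactly one double point} out to a loose Legendrian sphere by pushing the two front sheets through each other (compare \cite{YETI}), and only then invokes \cite{EM} to attach an embedded cap; the non-trivial homotopy class in $\pi_n(\R^{2n}\setminus L)$ is realized by choosing the appropriate formal class for that cap. Because the ball side is the standard disk from the start, no filling of a loose Legendrian is ever required.

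Your explanation of the parity constraint is also off target. The vanishing of the algebraic self-intersection number for odd $n$ does not prevent a single geometric double point for even $n$ (the Whitney sphere has one double point in every dimension), so that is not the mechanism. The constraint $n$ odd comes from the Maslov index bookkeeping in the \cite{YETI} cobordism and the cap: for $n$ even the analogous construction in $\R^{4k}$ produces an immersed sphere with three double points rather than one, as the paper notes after the proof.
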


\begin{proof}
	We fix  the standard Whitney sphere $w(S^n) \subset \R^{2n}_{\st}$, and construct a second immersed Lagrangian sphere which links it non-trivially. Let $L \subset \R^{2n}_{\st}$ be  the Lagrangian submanifold obtained by surgery on $w(S^n)$. Fix a point $p$ of $\R^{2n}_{\st}\setminus L$, and remove $L \sqcup \{p\}$ from $\R^{2n}_{\st}$ to obtain a symplectic manifold with disconnected negative end. Near the negative end asymptotic to $\{p\}$ we start with the standard Lagrangian disk bounded by the Legendrian unknot, and build a cobordism to a loose Legendrian sphere by pushing the two sheets of the unknot front through each other. The cobordism is immersed with precisely one double point, compare to \cite{YETI}. The positive end of this disk cobordism is now a loose Legendrian knot, which has a cap in each of the two possible homotopy classes in the complement of $L$, by \cite{EM}. The union of the cap and the immersed disk is our desired one double point immersion, which links $L$ and hence $w(S^n)$ non-trivially. The non-vanishing of the Hopf linking invariant then follows from Lemma \ref{lem:relating}. \end{proof}

The surgery on an immersed sphere with one low-index double point has vanishing Maslov class when $n=3$, and has minimal Maslov number $2-2k$ when $n=2k+1\geq 5$.  In particular, the examples constructed in the proof of Lemma \ref{Lem:whitney_link} show the necessity of the monotonicity and minimal Maslov index hypotheses in Theorem \ref{Thm:embLag}.

In $\R^{4k}$, the argument of Lemma \ref{Lem:whitney_link}, based on the results of \cite{YETI}, enables one to construct an immersed Lagrangian sphere with $3$ double points which non-trivially links the standard Whitney sphere.

\appendix

\section{Computation of a homotopy group}
In this appendix we compute the 
the $n^{\rm th}$ homotopy group of the complement of a Lagrangian submanifold of $\R^{2n}$. 

\subsection{The complement of a closed Lagrangian submanifold in $\R^{2n}$}\label{sec:algtop}

The stable homotopy type of the complement of a submanifold of Euclidean space is determined by Spanier-Whitehead duality.  We will give a concrete description of the homotopy group of relevance to us, in the special case of a Lagrangian submanifold (the key point is that the normal bundle agrees with the tangent bundle). 
Let $C\subset \R^{2n}_{\st}$, $n\ge 4$ be a Lagrangian submanifold and set $X=\R^{2n}\setminus C$. Fix a Morse function $f\colon C \to \R$ with exactly one minimum, and consider the corresponding CW-decomposition of $C$:
\[ 
C=C^{(n)}\supset C^{(n-1)}\supset\dots \supset C^{(0)} = \{\mathrm{pt}\},
\]
where $C^{(j)}$ is the union of the closures of the stable manifolds of all critical points of index $\le j$, and $\mathrm{pt}$ denotes a point. For dimensional reasons, the tangent bundle $TC$ has a non-zero section over the $2$-skeleton $C^{(2)}$. Fix such a section $v$, and consider the Thom space $Y_2$ of the spherical tangent bundle $SC$ over $C^{(2)}$, obtained from the restriction $SC|_{C^{(2)}}$ by collapsing the section $v$ to a point. 

\begin{Lemma}\label{lem:skeleton}
	$X$ is homotopy equivalent to a space obtained from $Y_2$ by attaching cells of dimension $\ge n+2$. 
\end{Lemma}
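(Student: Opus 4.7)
The plan is to construct a comparison map $\phi\colon Y_2 \to X$ and to show that it is an $(n+1)$-equivalence; the conclusion then follows by replacing $\phi$ with the inclusion of $Y_2$ into its mapping cylinder and invoking standard relative CW approximation to produce a CW complex obtained from $Y_2$ by attaching cells of dimension $\ge n+2$ and homotopy equivalent to $X$. To construct $\phi$, observe that the sphere bundle $SC|_{C^{(2)}}$ sits inside the boundary of a Weinstein tubular neighborhood of $C$, hence inside $X$; the section $v$ defines a map $v\colon C^{(2)} \to SC|_{C^{(2)}} \hookrightarrow X$. By general position, removing an $n$-submanifold from $\R^{2n}$ leaves an $(n-2)$-connected complement, so $\pi_k(X)=0$ for $k\le n-2$; since $\dim C^{(2)}=2\le n-2$ (using $n\ge 4$), $v$ is null-homotopic in $X$, and any choice of null-homotopy extends the inclusion $SC|_{C^{(2)}}\hookrightarrow X$ over the mapping cone of $v$, which is homotopy equivalent to $Y_2$.

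Next I would identify both sides as Thom spaces. The splitting $TC|_{C^{(2)}}\cong \underline{\R}\langle v\rangle \oplus E$ with $E$ of rank $n-1$ lets one write the sphere bundle as $S(TC|_{C^{(2)}})=D_+E \cup_{SE} D_-E$, with $v$ the zero-section of $D_+E$. Collapsing $v(C^{(2)})$ renders $D_+E$ contractible, so $Y_2 \simeq D_-E/SE = \mathrm{Th}(E)$, and in particular $\Sigma Y_2 \simeq \mathrm{Th}(\underline{\R}\oplus E) = \mathrm{Th}(TC|_{C^{(2)}})$. On the target side, the Lagrangian tubular neighborhood together with excision gives $\R^{2n}/X \simeq \mathrm{Th}(TC)$, and contractibility of $\R^{2n}$ in the Puppe sequence yields $\Sigma X \simeq \mathrm{Th}(TC)$. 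Unwinding the construction of $\phi$ through these cofiber sequences, one checks that $\Sigma\phi$ agrees up to homotopy with the natural Thom-space map $\mathrm{Th}(TC|_{C^{(2)}})\to \mathrm{Th}(TC)$ induced by the sub-bundle inclusion.

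By the Thom isomorphism (with $\Z/2\Z$ or local coefficients when $C$ is non-orientable), $\phi_*\colon \tilde H_k(Y_2)\to \tilde H_k(X)$ corresponds under these identifications to the inclusion-induced map $H_{k-n+1}(C^{(2)})\to H_{k-n+1}(C)$. This map is an isomorphism for $k-n+1\le 1$ (since $C$ and its 2-skeleton share $H_0$ and $H_1$) and surjective for $k-n+1=2$ (every 2-cycle in $C$ is already a cycle in $C^{(2)}$, possibly bounding a 3-chain in $C$). Thus $\phi_*$ is an isomorphism for $k\le n$ and surjective for $k=n+1$. Since $Y_2$ and $X$ are both simply connected for $n\ge 4$ (being $(n-2)$-connected), the Hurewicz and Whitehead theorems promote this to an $(n+1)$-equivalence of homotopy, and the mapping-cylinder construction then realizes $X$ as the claimed model.

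The principal subtlety is verifying that $\Sigma\phi$ is homotopic to the natural Thom-space inclusion; this requires carefully tracing the chosen null-homotopy of $v$ through the cofiber sequences identifying $Y_2$ with $\Sigma^{-1}\mathrm{Th}(TC|_{C^{(2)}})$ and $X$ with $\Sigma^{-1}\mathrm{Th}(TC)$. A secondary technicality is handling the non-orientable case, where one must work with twisted or $\Z/2\Z$-coefficients in the Thom isomorphism; however the range of degrees on which $\phi_*$ is an iso, respectively a surjection, is unaffected.
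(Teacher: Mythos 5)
Your strategy---construct a comparison map $\phi\colon Y_2\to X$, show it is an $(n+1)$-equivalence by suspending and comparing Thom spaces, and finish with relative CW approximation---is a genuinely different route from the paper's. The paper embeds $C\subset S^{2n}\subset S^{2n+1}$, notes $S^{2n+1}\setminus C\simeq\Sigma(S^{2n}\setminus C)$, and produces the required cell structure directly via Morse theory: one deforms the height function on $S^{2n+1}$ to a Bott--Morse function with a pair of critical Bott manifolds $\cong C$ of indices $0$ and $1$ sitting in the normal fibers, pulls the index-$0$ copy below the pole, and Morsifies the index-$1$ copy using $f$; the superlevel set just above the global minimum is then $\simeq S^{2n+1}\setminus C$, built from the top cell by attaching one $(n+k)$-cell per index-$k$ critical point of $-f$. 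Desuspending gives the lemma. What the Morse construction buys is that no comparison map ever needs to be analyzed, and the attaching maps are produced explicitly, which the paper needs anyway for Lemma~\ref{Lem:Thom_cell}.

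The genuine gap in your proposal is exactly the step you flag as the ``principal subtlety'': you assert, but do not verify, that $\Sigma\phi$ agrees up to homotopy with the Thom-space map induced by $TC|_{C^{(2)}}\hookrightarrow TC$. This is the load-bearing step, since without it you cannot conclude that $\phi_*$ realizes $H_{k-n+1}(C^{(2)})\to H_{k-n+1}(C)$, and it is not a formal consequence of the constructions. In particular $\phi$ depends on a choice of null-homotopy of $v\colon C^{(2)}\to X$, and two such choices differ by an element of $[\Sigma C^{(2)},X]$; already for $n=4$ one has $\pi_3(X)\cong H_0(C;\Z\otimes\det(TC))\neq 0$, so different choices can produce genuinely different maps $\phi$, and you need that \emph{some} choice desuspends the Thom-space map. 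A cleaner way to close this, bypassing the suspension altogether, is to use the factorization $j=\phi\circ q$, where $q\colon SC|_{C^{(2)}}\to Y_2$ is the quotient and $j\colon SC|_{C^{(2)}}\hookrightarrow X$ the inclusion: $q_*$ is read off from the cell structure, $j_*$ is computed by linking/intersection with chains on $C$, and this pins down $\phi_*$ in the relevant degrees. As written, though, the argument is incomplete at precisely the point that does the mathematical work.
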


\begin{proof}
	Consider the inclusion $\R^{2n}\subset S^{2n}$ and $C\subset S^{2n}\subset S^{2n+1}$. Defining 
	\[
	\hat X=S^{2n}\setminus C \quad \textrm{and} \quad \hat X'=S^{2n+1}\setminus C
	\] 
	we note that there is a homotopy equivalence $\hat X' \simeq \Sigma \hat X$ between $\hat X'$ and the suspension of $\hat X$. Consider the height function $h$ on $S^{2n+1}$ for which the equator $S^{2n}$ is the zero-set. There is a  a tubular neighborhood $\nu\,C$ of $C$ on which the restriction of $h$ agrees with the height function on each fiber disk. Now introduce on each fiber $n$-disk of $\nu\,C$ a cancelling pair of critical points of indices $0$ and $1$. This gives a function on $S^{2n+1}$ with a pair of critical Bott manifolds $C$ and $C'$ of indices $0$ and $1$. We deform this Bott-Morse function so as to pull the local minimum submanifold below the minimum of $h$, further Morsifying the index one critical submanifold $C'$ via $f$. Then the super-level set of a regular value slightly larger than the new global minimum is homotopy equivalent to the complement of $C$; in this description it is clear that it is obtained by attaching one cell of dimension $n+k$, starting with the maximum $0$-cell, for each critical point of $-f$ of index $k$. 
	
	We conclude that $\hat X'$ has a cell structure with $(n+2)$-skeleton given by the standard cell-structure of the Thom space $Y'_2$ of the restriction of the normal bundle of $C$ in $S^{2n+1}$ to $C^{(2)}$, with cells of dimension $\ge n+3$ attached. Any non-zero section of the normal bundle of $C$ in $S^{2n}\subset S^{2n+1}$ is homotopic to the vertical normal vector field in $S^{2n+1}$, which implies that $Y_2'$ is the suspension of $Y_2$.
\end{proof}

We next describe the attaching maps in the $(n+1)$-skeleton of $Y_2$. To this end we consider the chain complex generated in degree $j$ by the $j$-cells in $C^{(j)}$, $j=0,1,2$, with the differential $d^{\mathrm{tw}}$ in cellular homology twisted by the orientation bundle $\det(TC)|_{C^{(2)}}$. We view $\det(TC)$ as a rank one local system with $\Z$-coefficients.

\begin{Lemma} \label{Lem:Thom_cell}
	$Y_2$ has a cell structure with one $0$-cell corresponding to the point at infinity in the Thom space and one $((n-1)+j)$-cell for each $j$-cell of $C^{(2)}$, $j=0,1,2$, so that:
	
	\begin{enumerate}
		\item The $(n-1)$-skeleton $Y_0$ is an $(n-1)$-sphere.
		\item The $n$-skeleton $Y_1$ is obtained from $Y_0$ by attaching one $n$-cell $Y_1^{\tau}$  for each $1$-cell $\tau$ in $C^{(1)}$. The degree of the attaching map of $Y_1^\tau$ equals the coefficient of $\epsilon_0$ in $d^{\mathrm{tw}}\tau$, where $\epsilon_0$ is the $0$-cell. 
		\item The $(n+1)$-skeleton $Y_2$ is obtained from $Y_1$ by attaching one $(n+1)$-cell $Y_2^{\sigma}$ for each $2$-cell $\sigma$ in $C^{(2)}$. The degree of the attaching map of $Y^2_{\sigma}$ to $Y^1_{\tau}$ equals the coefficient of $\tau$ in $d^{\mathrm{tw}}\sigma$.
	\end{enumerate}
	\end{Lemma}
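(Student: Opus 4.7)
My plan is to identify $Y_2$ with the Thom space of a rank $(n-1)$ bundle over $C^{(2)}$, and then read off the cell structure from the standard cellular model of a Thom space. Using the section $v$, split
\[
TC|_{C^{(2)}} \;\cong\; \R\cdot v\,\oplus\,\xi,
\]
where $\xi$ is the orthogonal complement, of rank $n-1$. The sphere bundle then decomposes fiberwise as $S(\R\oplus \xi)$, which is an unreduced suspension $\Sigma S(\xi)$; globally $SC|_{C^{(2)}}=D_+(\xi)\cup_{S(\xi)} D_-(\xi)$, where $D_\pm(\xi)$ are the ``upper'' and ``lower'' disk subbundles of $\xi$ inside $S(\R\oplus\xi)$. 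The section $v$ sits at the zero section of $D_+(\xi)$, and $D_+(\xi)$ deformation retracts onto $v(C^{(2)})$; moreover this retraction extends to a retraction of an open neighborhood of $v(C^{(2)})$. Hence
\[
Y_2 \;\simeq\; SC|_{C^{(2)}}/D_+(\xi) \;=\; D_-(\xi)/S(\xi) \;=\; \mathrm{Th}(\xi).
\]

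Having identified $Y_2$ with $\mathrm{Th}(\xi)$, the claimed cell structure is the standard one for the Thom space of a rank $(n-1)$ bundle over a CW complex: there is a basepoint $0$-cell (the image of $S(\xi)$, i.e.\ the ``point at infinity''), and one $(j+(n-1))$-cell for each $j$-cell $e$ of $C^{(2)}$, obtained by choosing a local trivialization of $\xi$ over $e$ and forming the product with $D^{n-1}/\partial D^{n-1}$. Since $C^{(2)}$ has a single $0$-cell, the corresponding $(n-1)$-cell in $Y_2$ is attached trivially to the basepoint, yielding $Y_0\cong S^{n-1}$ and hence part (1). For $j=1,2$ this produces the cells $Y_1^\tau$ and $Y_2^\sigma$ described in parts (2) and (3).

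The remaining step is to identify the attaching-map degrees with the coefficients of the twisted differential $d^{\mathrm{tw}}$. First observe that, since $v$ trivializes the line subbundle $\R\cdot v$, we have $\det(TC)|_{C^{(2)}} \cong \det(\R\cdot v)\otimes\det(\xi) \cong \det(\xi)$, so twisting by $\det(TC)$ coincides with twisting by $\det(\xi)$. After fixing trivializations of $\xi$ cell by cell, the degree of the attaching map of a $(j+(n-1))$-cell of $\mathrm{Th}(\xi)$ into a $(j-1+(n-1))$-cell is the degree of the attaching map of the corresponding $j$-cell of $C^{(2)}$, modified by the orientation transition of $\xi$ across the relevant face; this is exactly the formula defining the cellular differential with local coefficients in $\det(\xi)$. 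For $j=1$ this gives the coefficient of $\epsilon_0$ in $d^{\mathrm{tw}}\tau$ of part (2), and for $j=2$ it gives the coefficient of $\tau$ in $d^{\mathrm{tw}}\sigma$ of part (3). The only nontrivial bookkeeping is the sign tracking in the last step, which is routine once trivializations of $\xi$ over the individual cells have been fixed and is parallel to the standard cellular proof of the Thom isomorphism with twisted coefficients.
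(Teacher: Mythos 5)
Your proof is correct and takes essentially the same route as the paper: identify $Y_2$ with the Thom space of the rank-$(n-1)$ complement $\xi=v^{\perp}$, take the standard cell structure on a Thom space over the CW base $C^{(2)}$, and read off the attaching degrees by comparing cell-by-cell trivializations of $\xi$, which reproduces the $\det(TC)$-twisted cellular differential. The only difference is that you spell out the identification $Y_2\simeq\mathrm{Th}(\xi)$ via the decomposition $SC|_{C^{(2)}}=D_+(\xi)\cup_{S(\xi)}D_-(\xi)$ and collapsing of the contractible subcomplex $D_+(\xi)$ (the paper simply asserts this step), while the paper is somewhat more explicit than you about the local-degree computation at preimages of cell centers.
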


\begin{proof} 
	Picking a non-zero section $v$ of the tangent bundle $TC$ over the 2-skeleton $C^{(2)}$ we can identify $Y_{2}$ with the Thom space of its orthogonal complement $v^{\perp}$. The $(n-1)$-skeleton of $Y_2$ is then the restriction of this Thom space to the point in $C^{(0)}\subset C^{(2)}$ and is clearly an $(n-1)$-sphere. 
	
	We next compute the degrees of the attaching maps of the higher cells. An $(n-1+j)$-cell $\hat\sigma^j$ is given by $\sigma^{j}\times D^{n-1}$ where $\sigma^{j}\approx D^{j}$ is a $j$-cell in $C$, where $D^{n-1}$ is the fiber of $v^{\perp}$ at the center of $\sigma^{j}$, and where the product structure is obtained by trivializing the pull-back of the bundle $v^{\perp}$ over the cell by moving the central fiber along rays in the disk. The attaching map $i$ of the cell $\hat\sigma^{j}$ takes $\sigma^{j}\times\partial D^{n-1}$ to the base point in the Thom space,  in other words to the $0$-cell, and takes $\partial\sigma^{j}\times D^{n-1}$ to the bundle over the $(j-1)$-skeleton by the natural map from $i^*v^{\perp}$ to $v^{\perp}|_{\mathrm{im}(i)}$.  The preimage in $\sigma^j \times D^{n-1}$ of the point given by the origin in $D^{n-1}$ at the central point in a cell $\sigma^{j-1}$ consists of the origin in $D^{n-1}$ at each of the preimages of the central point under the attaching map of $\sigma^{j}$ to $\sigma^{j-1}$. Furthermore, the local degree at such a preimage is exactly the local degree of the attaching map $\sigma^{j}\to\sigma^{j-1}$ multiplied by the sign of the parallel transport from the fiber at the central point of the disk. Since $\det(TC)|_{C^{(2)}} \cong \det v^{\perp}|_{C^{(2)}}$, this is exactly the twisted differential in the cellular complex for $C$.
	\end{proof}
	
	\begin{Corollary}\label{Cor:HnX}
	There is an isomorphism $H_n(X;\Z) \cong H_1(C;\Z\otimes \det(TC))$.
	\end{Corollary}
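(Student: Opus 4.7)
The plan is to chase through cellular homology, using the two preceding lemmas to identify the cellular chain complex of $Y_2$ in degrees $n-1, n, n+1$ with a degree shift of the twisted cellular chain complex of $C$ in degrees $0, 1, 2$.

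First, by Lemma \ref{lem:skeleton}, $X$ is obtained from $Y_2$ by attaching cells of dimension at least $n+2$. Since attaching a cell of dimension $k$ can only affect homology in degrees $k-1$ and $k$, the inclusion $Y_2 \hookrightarrow X$ induces an isomorphism $H_n(Y_2;\Z) \to H_n(X;\Z)$. It therefore suffices to compute $H_n(Y_2;\Z)$.

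Second, we use the cell structure of $Y_2$ from Lemma \ref{Lem:Thom_cell}. The relevant segment of the cellular chain complex of $Y_2$ is
\[
C_{n+1}(Y_2) \xrightarrow{\partial_{n+1}} C_n(Y_2) \xrightarrow{\partial_n} C_{n-1}(Y_2),
\]
where $C_{n+1}(Y_2) = \bigoplus_{\sigma} \Z\cdot Y_2^{\sigma}$ (indexed by the $2$-cells $\sigma$ of $C^{(2)}$), $C_n(Y_2) = \bigoplus_{\tau} \Z\cdot Y_1^{\tau}$ (indexed by the $1$-cells $\tau$ of $C^{(1)}$), and $C_{n-1}(Y_2) = \Z\cdot Y_0$ (generated by the unique $(n-1)$-cell). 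By parts (2) and (3) of Lemma \ref{Lem:Thom_cell}, the boundary maps $\partial_n, \partial_{n+1}$ coincide, under the bijection $Y_j^{\bullet} \leftrightarrow (\bullet)$, with the twisted cellular differentials of $C$ coming from the rank one local system $\det(TC)$. In other words, the above complex is isomorphic to
\[
C_2\bigl(C;\Z\otimes\det(TC)\bigr) \xrightarrow{d^{\mathrm{tw}}} C_1\bigl(C;\Z\otimes\det(TC)\bigr) \xrightarrow{d^{\mathrm{tw}}} C_0\bigl(C;\Z\otimes\det(TC)\bigr).
\]

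Taking homology at the middle term yields
\[
H_n(Y_2;\Z) \;=\; \ker(\partial_n)/\operatorname{im}(\partial_{n+1}) \;\cong\; H_1\bigl(C;\Z\otimes\det(TC)\bigr),
\]
and combining with the first step gives the claimed isomorphism $H_n(X;\Z) \cong H_1(C;\Z\otimes\det(TC))$.

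There is no real obstacle here; the only mild subtlety is verifying that the signs in the attaching map degrees in Lemma \ref{Lem:Thom_cell} match the sign conventions of the twisted cellular differential on $C$, which is built into the statement of that lemma. Everything else is a formal consequence of cellular homology and the two preceding lemmas.
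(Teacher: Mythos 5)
Your proof is correct and follows the route the paper implicitly intends: the corollary is stated without proof precisely because it is the direct cellular-homology readoff from Lemma \ref{Lem:Thom_cell}, with Lemma \ref{lem:skeleton} ensuring the higher cells do not interfere with $H_n$. The only small point worth making explicit (which you handle implicitly) is that $H_1(C;\Z\otimes\det(TC))$ depends only on the twisted chain groups in degrees $0,1,2$, i.e.\ on $C^{(2)}$, so restricting to the $2$-skeleton loses nothing.
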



As a first step in computing $\pi_{n}(X)$ we consider the Hurewicz homomorphism.

\begin{Lemma} \label{Lem:hurewicz} 
	The Hurewicz homomorphism $h\colon \pi_n(X) \to H_n(X;\Z)$ is surjective.
\end{Lemma}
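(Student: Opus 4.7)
The plan is to reduce the problem, step by step, along the filtration given by Lemmas \ref{lem:skeleton} and \ref{Lem:Thom_cell}, until the Hurewicz map in question becomes amenable to the relative Hurewicz theorem.

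First, by Lemma \ref{lem:skeleton} the space $X$ has the homotopy type of a CW complex obtained from $Y_2$ by attaching cells of dimension $\ge n+2$. Such cells affect neither $\pi_n$ nor $H_n$, so by naturality of the Hurewicz homomorphism it suffices to show that $h\colon\pi_n(Y_2)\to H_n(Y_2)$ is surjective.

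Next, by Lemma \ref{Lem:Thom_cell} the space $Y_2$ is built from $Y_1$ by attaching $(n+1)$-cells. Since $\pi_n(D^{n+1},S^n)\cong\pi_{n-1}(S^n)=0$ for $n\ge 2$, the long exact sequence of the pair $(Y_2,Y_1)$ combined with cellular approximation shows that the inclusion induces surjections $\pi_n(Y_1)\twoheadrightarrow\pi_n(Y_2)$ and $H_n(Y_1)\twoheadrightarrow H_n(Y_2)$. So by naturality it suffices to prove that $\pi_n(Y_1)\to H_n(Y_1)$ is surjective.

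Finally, $Y_1$ is built from $Y_0=S^{n-1}$ by attaching $n$-cells, so the relative CW pair $(Y_1,S^{n-1})$ has only $n$-cells and is therefore $(n-1)$-connected. Since $n\ge 4$, the base $S^{n-1}$ is simply connected, so the relative Hurewicz theorem gives $\pi_n(Y_1,S^{n-1})\xrightarrow{\cong}H_n(Y_1,S^{n-1})$; the absolute Hurewicz theorem gives $\pi_{n-1}(S^{n-1})\xrightarrow{\cong}H_{n-1}(S^{n-1})$. Then I would conclude by a diagram chase in the commutative ladder
\[
\begin{array}{ccccccc}
\pi_n(Y_1) & \to & \pi_n(Y_1,S^{n-1}) & \to & \pi_{n-1}(S^{n-1}) & \to & \pi_{n-1}(Y_1) \\
\downarrow h & & \downarrow \cong & & \downarrow \cong & & \downarrow h \\
H_n(Y_1) & \hookrightarrow & H_n(Y_1,S^{n-1}) & \to & H_{n-1}(S^{n-1}) & \to & H_{n-1}(Y_1)
\end{array}
\]
given any $\gamma\in H_n(Y_1)$, lift it to $\beta\in H_n(Y_1,S^{n-1})=\pi_n(Y_1,S^{n-1})$; it maps to zero in $\pi_{n-1}(S^{n-1})$, so lifts to $\tilde\beta\in\pi_n(Y_1)$; then $h(\tilde\beta)$ agrees with $\gamma$ in $H_n(Y_1,S^{n-1})$ and hence in $H_n(Y_1)$, since the latter injects. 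The only potentially delicate point is the hypothesis $n\ge 4$ needed for the simple connectivity of $S^{n-1}$ in the relative Hurewicz step; everything else is formal.
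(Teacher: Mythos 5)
Your argument is correct, but it takes a genuinely different route from the paper's. Both proofs begin by reducing from $X$ to $Y_2$, and you then further reduce along the cell filtration to $Y_1$ (the $(n+1)$-cells of $Y_2$ only kill, never create, classes in $\pi_n$ and in $H_n$), and finish with the relative Hurewicz theorem for the $(n-1)$-connected pair $(Y_1,S^{n-1})$ together with a diagram chase. This is a clean, purely formal argument, and it only uses the CW filtration of $Y_2$ without referring back to $C$ itself. The paper instead argues geometrically: it identifies each $n$-cycle $\Gamma$ in $Y_2$ with a $1$-cycle for the twisted differential on $C^{(1)}$, represents that $1$-cycle by an orientation-preserving loop $\gamma$ in the $1$-skeleton of $C$, and explicitly constructs a spherical map $f_\gamma\colon S^n\to Y_1$ (by parallel transporting the fiber disk of $v^\perp$ along $\gamma$ and collapsing the boundary) whose Hurewicz image is exactly $\Gamma$. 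The explicit representative $f_\gamma$ is not an idle choice: the same construction is reused in the proofs of Lemma \ref{Lem:kernel} and Lemma \ref{lem:pi_n}, where the kernel of $h$ and the role of $w_2$ are analysed by manipulating these maps concretely. So your argument is a perfectly valid stand-alone proof of the surjectivity statement, but the paper's proof earns its keep by producing explicit generators that the rest of the appendix needs. One small quibble: your justification for the surjection $\pi_n(Y_1)\twoheadrightarrow\pi_n(Y_2)$ by invoking $\pi_n(D^{n+1},S^n)$ is a slightly oblique way of saying that a relative CW pair with only $(n+1)$-cells is $n$-connected; the intended conclusion is nonetheless correct.
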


\begin{proof}
	Inclusion gives isomorphisms $\pi_n(Y_2)\cong \pi_n(X)$ and $H_n(Y_2)\cong H_{n}(X)$, and hence it suffices to prove the result for $Y_2$. It follows from the cell structure of Lemma \ref{Lem:Thom_cell} that any $n$-cycle $\Gamma$ in $Y_2$ corresponds to a $1$-cycle for the orientation-twisted differential on the 1-skeleton $C^{(1)}$. Any such cycle is carried by an orientation preserving loop $\gamma$ in the 1-skeleton (i.e. one such that $TC|_{\gamma}$ is trivial). Parameterizing $\gamma$ by an interval $[0,1]$, we define a map $f_{\gamma}$ from the sphere into $Y_1$,
	\begin{equation} \label{eq:f_explicit}
	f_{\gamma}\colon S^{n} \ \approx \ ([0,1]\times D^{n-1})/\partial ([0,1]\times D^{n-1}) \  \longrightarrow \ Y_1,
	\end{equation}
	exactly as when we defined the attaching maps corresponding to $1$-cells. More precisely, $f_{\gamma}$ takes $[0,1]\times \partial D^{n-1}$ to the base point and maps $[0,1]\times \mathrm{int}(D^{n-1})$ to $Y_1$ using parallel translation in the pull-back of the bundle $v^{\perp}$ over $[0,1]$ from the central point $1/2 \in [0,1]$. 
	The image of the map $f_{\gamma}$ sweeps the cellular cycle $\Gamma$  with multiplicity one, and so its image under the Hurewicz homomorphism is exactly $\Gamma$. The lemma follows. 
\end{proof}

Let $\eta \in \pi_n(Y_0) = \Z/2\Z$ denote a generator. Via inclusion $Y_0 \subset Y_2$, this defines a class (not necessarily non-trivial) in $\pi_n(Y_2) \cong \pi_n(X)$.

\begin{Lemma} \label{Lem:kernel}
The kernel of the Hurewicz homomorphism $h$ is generated by $\eta$.
\end{Lemma}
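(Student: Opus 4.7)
The plan is to establish the two inclusions $\langle \eta \rangle \subseteq \ker h$ and $\ker h \subseteq \langle \eta \rangle$ separately. The first is immediate: $\eta$ factors through the inclusion $Y_0 \hookrightarrow Y_2$, and $H_n(Y_0) = H_n(S^{n-1}) = 0$, so $h(\eta) = 0$.

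For the reverse inclusion, I would first reduce the problem from $\pi_n(Y_2)$ to $\pi_n(Y_1)$. Since $Y_2$ is built from $Y_1$ by attaching $(n+1)$-cells, cellular approximation shows that the inclusion $Y_1 \hookrightarrow Y_2$ induces a surjection on $\pi_n$ with kernel the image of $\partial \colon \pi_{n+1}(Y_2, Y_1) \to \pi_n(Y_1)$. The hypothesis $n \ge 4$ ensures that $Y_0 = S^{n-1}$ is simply connected, and attaching cells of dimension $\ge n > 2$ preserves simple connectivity, so $Y_1$ is simply connected; moreover the pair $(Y_2, Y_1)$ is $n$-connected because $Y_2$ is obtained from $Y_1$ by attaching $(n{+}1)$-cells. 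The relative Hurewicz theorem therefore gives an isomorphism $\pi_{n+1}(Y_2, Y_1) \cong H_{n+1}(Y_2, Y_1)$. A standard diagram chase, lifting an element $\alpha \in \ker h$ to $\tilde\alpha \in \pi_n(Y_1)$ and using the relative Hurewicz isomorphism to subtract off a suitable $\partial\beta$, then shows that $\ker h$ equals the image in $\pi_n(Y_2)$ of $\ker h_1$, where $h_1 \colon \pi_n(Y_1) \to H_n(Y_1)$ is the Hurewicz map for $Y_1$.

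The second step is to compute $\ker h_1$. I would use the cofibration $Y_0 \xrightarrow{i} Y_1 \xrightarrow{p} Y_1/Y_0 = \bigvee_\tau S^n$. Since $Y_0$ is $(n-2)$-connected and $\bigvee_\tau S^n$ is $(n-1)$-connected, Blakers--Massey applies (for $n \ge 4$) to yield the long exact sequence
\[ \pi_{n+1}\Bigl(\bigvee\nolimits_\tau S^n\Bigr) \to \pi_n(Y_0) \xrightarrow{i_*} \pi_n(Y_1) \xrightarrow{p_*} \pi_n\Bigl(\bigvee\nolimits_\tau S^n\Bigr) \to \pi_{n-1}(Y_0). \]
Comparing with the long exact sequence of $(Y_1, Y_0)$ in homology via naturality of Hurewicz, and using that the Hurewicz map for $\bigvee_\tau S^n$ is an isomorphism (the space is $(n-1)$-connected) together with injectivity of $H_n(Y_1) \hookrightarrow H_n(\bigvee_\tau S^n)$ (since $H_n(Y_0)=0$), one concludes $\ker p_* = \ker h_1$. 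By exactness of the Blakers--Massey sequence, $\ker p_* = i_*(\pi_n(Y_0)) = \langle i_*(\eta) \rangle$. Pushing forward along $\pi_n(Y_1) \twoheadrightarrow \pi_n(Y_2)$ yields $\ker h = \langle \eta \rangle$, as claimed.

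The main point requiring care is the reduction to $Y_1$, where the relative Hurewicz isomorphism on the pair $(Y_2, Y_1)$ is used to lift the homology boundary relation to a homotopy boundary relation; this is where the hypothesis $n \ge 4$ first enters, both for the simple connectivity of $Y_1$ and for Blakers--Massey to apply to the cofiber sequence in the right range. Once this reduction is in hand, the cofiber-sequence computation of $\ker h_1$ is a straightforward naturality argument.
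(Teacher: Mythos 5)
Your proof is correct and establishes the lemma by a genuinely different route from the paper. The paper's argument is hands-on: after reducing to a map $f\colon S^n \to Y_1$ via the exact sequence \eqref{eqn:les_12}, it exploits the explicit identification of the cellular chain complex of $Y_2$ with the twisted complex of $C^{(2)}$, and performs an inductive sequence of homotopies $f \mapsto f \ast f_{\partial\tau}$ (each crossing one $(n+1)$-cell of $Y_2$) that reduces the bounding cellular $(n+1)$-chain one term at a time; when the chain becomes zero, $f$ has degree $0$ on each $n$-cell of $Y_1$ and can be pushed into $Y_0 = S^{n-1}$, so it lies in $\langle \eta \rangle$. Your version instead trades this explicit cell-by-cell homotopy for two standard theorems. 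First, the relative Hurewicz isomorphism $\pi_{n+1}(Y_2,Y_1) \cong H_{n+1}(Y_2,Y_1)$ (valid since $(Y_2,Y_1)$ is $n$-connected and $Y_1$ is simply connected) allows a diagram chase lifting any $\alpha \in \ker h$ to an element of $\ker h_1 \subset \pi_n(Y_1)$; this does cleanly what the paper handles by choosing a bounding $(n+1)$-chain directly. Second, you compute $\ker h_1$ via Blakers--Massey applied to the cofibration $Y_0 \to Y_1 \to \bigvee S^n$, using that the Hurewicz map on $\bigvee S^n$ is an isomorphism and $H_n(Y_1) \hookrightarrow H_n(\bigvee S^n)$ is injective (since $H_n(Y_0)=0$); exactness then identifies $\ker h_1 = i_*\pi_n(Y_0) = \langle \eta \rangle$. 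The connectivity bounds are in the stable range for $n\geq 4$ (Blakers--Massey gives isomorphisms $\pi_i(Y_1,Y_0)\cong\pi_i(Y_1/Y_0)$ through degree $2n-3\geq n+1$), matching the paper's standing hypothesis. What your approach buys: a cleaner, more modular argument relying on well-known machinery, with the two steps (reduce to $Y_1$, compute $\ker h_1$) completely decoupled. What the paper's approach buys: it is elementary (only the long exact sequence of a pair and cellular approximation), explicitly geometric, and transparently compatible with the combinatorics of the twisted cellular complex of $C$ that is then used in the subsequent Lemmas.
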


\begin{proof}
The homotopy exact sequence for $(Y_2,Y_1)$ includes
	\begin{equation} \label{eqn:les_12}
\cdots \to \pi_{n+1}(Y_2,Y_1) \to \pi_n(Y_1) \to \pi_n(Y_2) \to \{0\}.
\end{equation}
Suppose that $f: S^n \to Y_2$ is in the kernel of $h$. We may assume from \eqref{eqn:les_12} that $f$ has image in $Y_1$, and we wish to prove that we can homotope $f$ into the $(n-1)$-skeleton $Y_0$, since $\eta$ generates $\pi_n(Y_0)$.   To do this, it suffices to show that we can homotope $f$ to a map $f'$ so that, for each $n$-cell in $Y_1$, the degree of the composition of $f'$ with projection to that $n$-cell vanishes.

 Let $[f(S^n)]$ denote the cellular $n$-cycle defined by $f$.  There is a cellular $(n+1)$-chain $Y_{2}(\sigma)$ with $\partial Y_2(\sigma)=[f(S^n)]$. Using the isomorphism with the twisted homology complex, this cellular $(n+1)$-chain corresponds to a cellular $2$-chain $\sigma$ for $C^{(2)}$. Suppose $d^{\mathrm{tw}}\sigma=\gamma$. 
	
	Let $\tau$ be a 2-cell which appears with coefficient $-n_\tau$ in $\sigma$. Let $\partial \tau$ denote the loop parameterized by the attaching map of $\tau$. Then $f$ is homotopic to $f \ast f_{\partial\tau}$ where $\ast$ denotes composition in $\pi_n(Y_2)$ and $f_{\partial \tau}$ is as in \eqref{eq:f_explicit}. The homotopy pulls the part of the map corresponding to $f_{\partial\tau}$ across the $2$-cell $\tau$. The $n$-cycle $[f \ast f_{\partial \tau}(S^n)]$ has associated $1$-chain $\gamma + \partial \tau$ in the twisted cellular complex for $C$, and there is a chain $\sigma'$ with $d^{\mathrm{tw}}\sigma'=\gamma+\partial\tau$. The coefficient of $\tau$ in $\sigma'$ is $-(n_{\tau}-1)$. Continuing  inductively, we homotope the map $f$ to a map $f'$ whose associated cellular $n$-cycle $[f'(S^n)] = \partial Y_2(0)$, i.e. where the bounding cellular $(n+1)$-chain is the zero-chain.  This precisely means that the map obtained by composing $f'$ with projection to any $n$-cell in $Y_1$ has degree $0$, as required. \end{proof}

Combining the previous two Lemmas, to compute $\pi_n(X)$, it only remains to understand when $\eta$ vanishes in $\pi_n(X) = \pi_n(Y_2)$.

For $i=1,2$ let $w_i(TC)$ denote the $i$-th Stiefel-Whitney class of $TC$, viewed as defining an element
\begin{equation} \label{eqn:sw_class_as_homomorphism}
w_i \in \Hom(H_i(C;\Z/2\Z);\Z/2\Z).
\end{equation}
Note that $C$ is orientable if $w_1=0$ and both orientable and spin if $w_1=0$ and $w_2=0$.   More concretely, $w_2=0$ if and only if the trivialization of $v^{\perp}$ over the orientation-preserving loops in the 1-skeleton of $C$ extends over all $2$-cells.  Thus, if both $w_1=0$ and $w_2=0$, we can trivialise $v^{\perp}$ over $C^{(2)}$, and $Y_2$ is the Thom space of a trivial bundle. In this case, the class $\eta$ obviously survives as a split summand in $\pi_n(Y_2)$, which shows that 
	\[
	\pi_n(Y_2) = \Z/2\Z \langle \eta \rangle \oplus H_n(X;\Z).
	\]
	To prove the general statement, recall that $\pi_n(Y_2)$ is a quotient of $\pi_n(Y_1)$. 
	
	\begin{Lemma}
The homotopy group $\pi_n(Y_1) = H_1(C^{(1)};\Z\otimes \det(TC))\oplus \Z/2\Z\langle\eta\rangle.$
\end{Lemma}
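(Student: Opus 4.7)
The plan is to analyze the long exact sequence of the CW pair $(Y_1,Y_0)=(Y_1,S^{n-1})$ via a Blakers-Massey identification, exploiting the fact that $2\eta=0$ in $\pi_n(S^{n-1})=\Z/2\Z$. From Lemma \ref{Lem:Thom_cell}, $Y_1$ is obtained from $S^{n-1}$ by attaching one $n$-cell $Y_1^\tau$ for each $1$-cell $\tau$ of $C^{(1)}$, and the attaching map $\varphi_\tau\colon S^{n-1}\to S^{n-1}$ has degree equal to the coefficient of $\epsilon_0$ in $d^{\mathrm{tw}}\tau$. Since $C^{(1)}$ has the unique $0$-cell $\epsilon_0$, every $1$-cell is a loop, and $\deg\varphi_\tau=0$ when $\tau$ preserves orientation and $\deg\varphi_\tau=\pm 2$ otherwise.

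Because $S^{n-1}$ is $(n-2)$-connected and only $n$-cells are attached, the quotient map $(Y_1,S^{n-1})\to(Y_1/S^{n-1},\ast)=(\bigvee_\tau S^n_\tau,\ast)$ is a $(2n-2)$-equivalence by Blakers-Massey, so for $n\geq 4$ it induces isomorphisms on $\pi_n$ and on $\pi_{n+1}$. Under this identification, the connecting map $\bigvee_\tau S^n_\tau\to \Sigma S^{n-1}=S^n$ of the cofiber sequence $S^{n-1}\to Y_1\to Y_1/S^{n-1}$ restricts on the $\tau$-summand to $\Sigma\varphi_\tau$, a self-map of $S^n$ of degree $\deg\varphi_\tau$.

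Feeding this into the long exact sequence of the pair yields
\[
\pi_{n+1}\Bigl(\bigvee_\tau S^n_\tau\Bigr)\xrightarrow{\partial_1}\pi_n(S^{n-1})\to\pi_n(Y_1)\to\pi_n\Bigl(\bigvee_\tau S^n_\tau\Bigr)\xrightarrow{\partial_0}\pi_{n-1}(S^{n-1}).
\]
The right-hand boundary $\partial_0\colon\bigoplus_\tau\Z\to\Z$ records $\deg\varphi_\tau$ on each summand, so by the description of $d^{\mathrm{tw}}$ above, $\ker\partial_0$ is precisely $H_1(C^{(1)};\Z\otimes\det(TC))$. For $n\geq 4$, Whitehead products of $n$-dimensional classes live in $\pi_{2n-1}$, so $\pi_{n+1}(\bigvee_\tau S^n_\tau)=\bigoplus_\tau\Z/2\Z\langle\eta_\tau\rangle$; on each summand $\partial_1$ sends $\eta_\tau$ to $(\deg\varphi_\tau)\cdot\eta$, which vanishes because $\deg\varphi_\tau$ is even and $2\eta=0$. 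The resulting short exact sequence
\[
0\to\Z/2\Z\langle\eta\rangle\to\pi_n(Y_1)\to H_1(C^{(1)};\Z\otimes\det(TC))\to 0
\]
splits because the right-hand term is a subgroup of the free abelian group $\bigoplus_\tau\Z$, giving the claimed decomposition.

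The only delicate step is the vanishing of $\partial_1$, which relies on identifying the connecting map of the cofiber sequence with $\Sigma\varphi_\tau$ and on the relation $2\eta=0$. The hypothesis $n\geq 4$ is used both to place $i=n+1$ inside the Blakers-Massey range and to guarantee $\pi_{n+1}(S^n)=\Z/2\Z\langle\eta\rangle$; everything else is routine bookkeeping.
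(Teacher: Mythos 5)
Your proof is correct and reaches the same short exact sequence as the paper, but handles the key step — vanishing of the boundary $\partial_1\colon\pi_{n+1}(Y_1,Y_0)\to\pi_n(Y_0)$ — by a different mechanism. You compute $\partial_1$ on each generator $\eta_\tau$ directly: by naturality of the LES boundary map under the characteristic map of the $\tau$-cell, $\partial_1(\eta_\tau)=(\varphi_\tau)_*\eta$, and a degree-$d$ self-map of $S^{n-1}$ acts on $\pi_n(S^{n-1})\cong\Z/2\Z$ by multiplication by $d$ (valid since $n\geq4$ puts us in the stable range where $[d]\circ\eta$ has Hopf invariant $d^2\equiv d\ \mathrm{mod}\ 2$); as the attaching degrees are $0$ or $\pm2$, every $\partial_1(\eta_\tau)$ vanishes. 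The paper instead first homotopes the degree-$0$ attaching maps to constants, realizing $Y_1\simeq Z\vee\bigvee_j S^n_j$ with $Z$ built from $S^{n-1}$ by degree-$2$ attachments, reduces to a single cell so that $Z$ is a Moore space $M(\Z/2,n-1)$, and uses the cofiber long exact sequence for $S^{n-1}\xrightarrow{\,\times 2\,}S^{n-1}\to Z$ to force $\pi_n(Z)=\Z/2\Z$, from which the vanishing of the boundary map follows indirectly. Both routes invoke Blakers--Massey and Hilton's theorem to identify $\pi_{n+1}(Y_1,Y_0)\cong\bigoplus_\tau\Z/2\Z$, and both derive the splitting from $\ker\partial_0$ being a subgroup of $\bigoplus_\tau\Z$ and hence free abelian. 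Your version is more direct and bypasses the wedge decomposition, at the cost of invoking the effect of degree maps on $\eta$; the paper's version only needs the cofiber long exact sequence for a single Moore space.
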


\begin{proof}
Consider the $1$-skeleton $C^{(1)}$; the corresponding $n$-skeleton $Y_1$ is constructed as follows. Start with an $(n-1)$-sphere $\Sigma$,  for each orientation preserving $1$-handle in $C^{(1)}$ attach an $n$-disk to $\Sigma$ with an attaching map of degree $0$, and for each orientation reversing handle attach an $n$-disk to $\Sigma$ with attaching map of degree $2$. Homotoping the attaching maps of degree $0$ to constant maps, we see that $Y_1$ is homotopy equivalent to a wedge $Z \vee \bigvee_j S^n_j$, where there is one $S^n$-factor for each orientation preserving 1-handle, and where the space $Z$ is obtained from $\Sigma$ by attaching $n$-disks by maps of degree $2$. 

Note that $\pi_n(Y_1,Y_0) \cong \Z^k$, where $k$ is the number of $1$-cells in $C^{(1)}$, and the map $\chi: \pi_n(Y_1,Y_0) \to \pi_{n-1}(Y_0) = \Z$ sends the generator associated to a 1-cell $\gamma_j$ to the attaching degree $d^{\mathrm{tw}}({\gamma_j}) = d_j$.  Thus, $\chi$ is naturally identified with the boundary map $C^{\textrm{tw}}_1 \to C^{\textrm{tw}}_{0}$ in the orientation-twisted cellular complex for $C^{(1)}$. 
The homotopy exact sequence for the pair $(Y_1,Y_0)$ reads
\begin{equation} \label{eqn:les_01}
 \oplus_j \Z/2\Z \stackrel{\alpha}{\longrightarrow} \pi_n(Y_0) = \Z/2\Z  \to \pi_n(Y_1) \to \pi_n(Y_1,Y_0) \to \Z \to \Z/\{d_j\}\Z \to \{0\},
\end{equation}
where we have used
\[ 
 \pi_{n+1}(Y_1,Y_0) = \pi_{n+1}(\bigvee_j S^n) = \oplus_j \Z/2,
\]
which follows from Hilton's theorem (since $n\geq 3$, all non-trivial Whitehead products have degree $\geq 2n-1 > n+1$). We claim the map $\alpha $ in \eqref{eqn:les_01} vanishes. 
It suffices to treat the case when there is a single $n$-cell attached by a map $g$ of degree $2$, and $Y_1 = Z$ is the Moore space $M(\Z/2,n-1)$. There is a cofiber sequence $S^{n-1} \to C_{g} \to Z$, where $C_{g} \simeq S^{n-1}$ is the mapping cylinder of $g$;  this yields an exact sequence, for $i \leq 2n-2$,
\[
\cdots \to\pi_i(S^{n-1}) \stackrel{2}{\longrightarrow} \pi_i(S^{n-1}) \to \pi_i(Z) \to \pi_{i-1}(S^{n-1}) \stackrel{2}{\longrightarrow} \pi_{i-1}(S^{n-1})\to \cdots
\] which shows that $\pi_{n}(Z) = \Z/2$ as required.  Since $\ker(\chi)$ is free abelian, the sequence
\begin{equation} \label{eqn:n-skeleton}
0 \to \Z/2 \to \pi_n(Y_1) \to \ker(\chi) \to 0
\end{equation}
necessarily splits, which completes the proof.
	\end{proof}

\begin{Lemma}\label{lem:pi_n}
	The $n$-th homotopy group $\pi_n(X) \cong \pi_n(\R^{2n} \setminus C)$ is determined as follows:
	\begin{itemize}
		\item If  $w_2 = 0$, there is an exact sequence 
		\[
		0 \to \Z/2\langle \eta \rangle  \to \pi_n(X) \stackrel{h}{\longrightarrow} H_n(X;\Z) \to 0,
		\]
		which splits canonically and hence 
		\[
		\pi_n(X) \approx \bZ/2\langle \eta\rangle \oplus H_1(C;\Z\otimes\det(TC)).
		\]
		\item If $w_2 \neq 0$, then  $h$ is an isomorphism and hence
		\[  
		\pi_n(X) \approx H_1(C;\Z\otimes\det(TC)).
		\]
	\end{itemize}
\end{Lemma}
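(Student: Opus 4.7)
The preceding lemmas do most of the work: by Lemma~\ref{Lem:hurewicz} the Hurewicz map $h\colon\pi_n(X)\to H_n(X;\Z)$ is surjective, by Lemma~\ref{Lem:kernel} its kernel is cyclic, generated by $\eta$, and by Corollary~\ref{Cor:HnX} we have $H_n(X;\Z)\cong H_1(C;\Z\otimes\det(TC))$. Hence what remains is to decide whether $\eta$ has order $1$ or $2$ in $\pi_n(X)=\pi_n(Y_2)$, and to verify the splitting of the resulting short exact extension when it does.

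My plan is to proceed from the exact sequence
\[
\pi_{n+1}(Y_2,Y_1)\longrightarrow\pi_n(Y_1)\longrightarrow\pi_n(Y_2)\longrightarrow 0
\]
together with the description $\pi_n(Y_1)=H_1(C^{(1)};\Z\otimes\det(TC))\oplus\Z/2\langle\eta\rangle$ from the previous lemma. Since $Y_2$ is built from the simply connected space $Y_1$ by attaching one $(n+1)$-cell $Y_2^\sigma$ for each $2$-cell $\sigma$ of $C$, relative Hurewicz gives $\pi_{n+1}(Y_2,Y_1)\cong\bigoplus_\sigma\Z$, and the kernel of the surjection onto $\pi_n(Y_2)$ is generated by the attaching maps $\alpha_\sigma\in\pi_n(Y_1)$. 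Writing $\alpha_\sigma=(a_\sigma,c_\sigma\eta)$ in the decomposition above, Lemma~\ref{Lem:Thom_cell}(3) identifies $a_\sigma=d^{\mathrm{tw}}\sigma$, so the whole question reduces to understanding the $\Z/2$-valued cochain $\sigma\mapsto c_\sigma$.

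The main step, which I expect to be the hardest, is to show that $(c_\sigma)_\sigma$ represents the cohomology class $w_2(TC)\in H^2(C;\Z/2)$. The idea is that by the proof of Lemma~\ref{lem:skeleton} we may identify $Y_2$ with the Thom space of the rank $(n-1)$ bundle $v^\perp\to C^{(2)}$ obtained from the sphere bundle $SC|_{C^{(2)}}$ by collapsing the chosen section $v$. The attaching map of $Y_2^\sigma=\sigma\times D^{n-1}$ is then governed by the comparison between any trivialization of $v^\perp|_\sigma$ and the trivialization of $v^\perp|_{\partial\sigma}$ induced by parallel transport from the $1$-skeleton. Since $n-1\geq 3$, the resulting discrepancy over $\partial\sigma\cong S^1$ is measured by $\pi_1(\mathrm{SO}(n-1))=\Z/2$, and projecting the attaching sphere onto the fiber $Y_0=S^{n-1}$ picks up precisely this $\Z/2$ in $\pi_n(S^{n-1})=\Z/2\langle\eta\rangle$. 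By the Whitney sum formula applied to $TC|_{C^{(2)}}\cong\underline{\R}v\oplus v^\perp$, this obstruction equals $w_2(v^\perp)(\sigma)=w_2(TC)(\sigma)$.

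Granted this identification, both cases of the lemma follow. If $w_2(TC)=0$, then after adjusting the splitting of $\pi_n(Y_1)$ by the element of $H^1(C^{(1)};\Z/2)$ realizing the relevant coboundary if necessary, we may assume $c_\sigma=0$ for every $\sigma$; the attaching subgroup then lies in the $H_1$-summand, and the quotient splits as $\pi_n(Y_2)=\Z/2\langle\eta\rangle\oplus H_1(C;\Z\otimes\det(TC))$. If $w_2(TC)\neq 0$, pairing $w_2$ with an appropriate integer twisted $2$-cycle of $C$ (whose existence follows from Poincar\'e duality for closed $C$, and from the analogous arrangement of Appendix~\ref{sec:withboundary} for $C$ with ideal boundary) yields a combination $\sum n_\sigma\alpha_\sigma=(0,\eta)\in\pi_n(Y_1)$; this shows $\eta$ lies in the relation subgroup, so $\eta=0$ in $\pi_n(Y_2)$ and $h$ is an isomorphism.
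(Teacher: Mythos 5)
Your overall strategy matches the paper's: realize $\pi_n(X)\cong\pi_n(Y_2)$, present $\pi_n(Y_2)$ as a quotient of $\pi_n(Y_1)=H_1(C^{(1)};\Z\otimes\det(TC))\oplus\Z/2\langle\eta\rangle$ by the attaching classes $\alpha_\sigma=(d^{\mathrm{tw}}\sigma,\,c_\sigma\eta)$ of the $(n{+}1)$-cells, and identify the cochain $\sigma\mapsto c_\sigma$ with the obstruction cocycle representing $w_2(TC)$. This identification and the treatment of the $w_2=0$ case (changing the splitting by a coboundary, so that the relation subgroup lands in the $H_1$-summand) are the same ideas as in the paper.

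There is, however, a genuine gap in your $w_2\neq 0$ case. You claim Poincar\'e duality produces a twisted \emph{integer} $2$-cycle $z=\sum n_\sigma\sigma$ with $\langle w_2,\,z\bmod 2\rangle=1$, so that $\sum n_\sigma\alpha_\sigma=(0,\eta)$. But Poincar\'e duality only furnishes a mod-$2$ class $z_2\in H_2(C;\Z/2)$ pairing nontrivially with $w_2$; lifting $z_2$ to a twisted integer cycle requires the twisted Bockstein $\beta(z_2)\in H_1(C;\Z\otimes\det(TC))$ to vanish, which can fail when $H_1(C;\Z\otimes\det(TC))$ has $2$-torsion. (For example, a manifold with $H_2(C;\Z\otimes\det(TC))=0$ and $w_2\neq 0$ admits no such lift; whether the Lagrangian hypothesis rules such $C$ out is far from obvious and is not argued.) The paper avoids needing any such cycle: it takes a single $2$-cell $\sigma$ over which the framing of $v^\perp$ fails to extend, and homotopes the corresponding attaching $n$-sphere $\psi_\sigma$ across $\sigma$ --- a homotopy that lives in $Y_1\cup Y_2^\sigma\subset Y_2$ --- to a map into $Y_0=\Sigma$. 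Because the framing does not extend, that terminal map represents $\eta$, and since $\psi_\sigma$ bounds the cell $Y_2^\sigma$ it is nullhomotopic in $Y_2$; hence $\eta=0$ directly, with no need to cancel the $H_1$-component of $\alpha_\sigma$ against other attaching classes. You should either adopt this geometric homotopy argument, or supply a separate proof that the required Bockstein vanishes (e.g. by deducing it from the Lagrangian condition), before your $w_2\neq 0$ case is complete.
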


\begin{proof}

 Given any cell complex $A$ with only $j$-cells for $j\in\{0, n-1,n,n+1\}$, if $A'$ is obtained by attaching an $(n+1)$-cell to $A$ along $f\colon S^{n}\to A$, then
	\[ 
	\pi_n(A')=\pi_n(A)/\langle[f]\rangle.
	\]
	Consider attaching a $2$-disk to $C^{(1)}$. Assume first that the trivialization of $v^{\perp}$ over orientable loops in the $1$-skeleton of $C$ extends over the attached 2-disk. By our description of the attaching maps for $Y_2$ via transporting trivializations from the central point in cells radially outwards, the attaching map of the corresponding $(n+1)$-cell is the sum of the $n$-spheres corresponding to the $1$-cells in the boundary of the given 2-disk. This vanishes in the $\Z/2\Z$-summand of $\pi_n(Y_1)$.  If $w_2=0$, the result follows, with the canonical splitting of $\pi_n(Y_2)$ inherited from that for $\pi_n(Y_1)$.

	To conclude, it suffices to show that if $w_2\neq0$ then $\eta$ arises as the homotopy class of the attaching map of some $(n+1)$-cell when constructing $Y_2$ from $Y_1$. 
We know there is a $2$-cell $\sigma$ over which the trivialization of $v^{\perp}$ does not extend, and inductively we may suppose this is the first attached handle over which the trivialization does not extend.   Let $\psi$ denote the attaching map of the $(n+1)$-cell in $Y_2$ corresponding to $\sigma$. The boundary loop $\gamma$ of the $2$-handle $\sigma$ must be expressible in terms of boundaries of $2$-handles that have already been attached (in particular, $\gamma$ is bounding). Consider now the map $f_{\gamma}$ from Lemma \ref{Lem:hurewicz}.  Since the framing of $v^{\perp}$ does not extend over $\sigma$, we find that $\psi$   is homotopic to the map $f_{\gamma}'$ which is defined in the same manner as $f_{\gamma}$ but using a map $[0,1]\times D^{n-1} \to Y_1$ which differs from that of $f_{\gamma}$ by a twist corresponding to the generator of $\pi_1(O(n-1))$. 
	
	We now homotope $f_{\gamma}'$ across the two cell $\sigma$ to a map  into $\Sigma$. Identify $\Sigma$ with the one point compactification of the fiber in $v^{\perp}$. As before, we homotope the map by pulling the loop $\gamma$ over the disk and parallel translating the fiber. The preimage of the origin in $\Sigma$  is the boundary loop $\gamma$ with its initial framing. In the $n$-sphere boundary of the attached $(n+1)$-handle this is the non-trivially framed loop, so we conclude that $\eta$ is in the image of the attaching map. Hence the presence of a $2$-disk in $Y_2$ over which the trivialization of $v^{\perp}$ does not extend implies that $\eta = 0 \in \pi_n(Y_2)$.
	\end{proof}

\subsection{The complement of a Lagrangian submanifold with Legendrian boundary in $\R^{2n}_{\st}$}\label{sec:withboundary}
In this section we carry over the study for closed Lagrangians in Section \ref{sec:algtop} to the case of Lagrangians with Legendrian boundary. As in Section \ref{sec:algtop} let $C$ denote the Lagrangian and let $\Gamma$ denote its Legendrian boundary. We show here that Lemma \ref{lem:pi_n} holds unchanged in this situation. In other words $\pi_n(\R^{2n}_{\st}\setminus C)$ is isomorphic to
\[ 
H_{1}(C,\Z\otimes\det(TC))\oplus\Z/2\Z\langle\eta\rangle,
\]
if $w_2(C)=0$, and 
\[ 
H_{1}(C,\Z\otimes\det(TC)),
\]
if $w_2(C)\ne 0$.

To see this we consider $C$ as a clean submanifold of $D^{2n}$ with boundary $\Gamma\subset S^{2n-1}$.
The complement of $C\subset D^{2n}\subset D^{2n+1}$ is again the suspension of its complement in $D^{2n}$. 

We will consider Morse functions on manifolds with boundary which are extensions of Morse functions on the boundary, and for which the Morse flow in the boundary agrees with the Morse flow on the whole manifold. As in the closed case, we consider a deformation of the Morse function on $D^{2n+1}$ with an index $2n+1$ and an index $1$ critical point on the boundary sphere and a minimum at the center. We deform this by introducing a minimum and an index one critical point in each normal fiber of $C$, where the normal fiber over $\Gamma$ is the normal fiber in the boundary. Exactly as in the closed case, we find that the $(n+2)$-skeleton of the complement is given by the Thom space of the normal bundle over the $2$-skeleton on $C$  defined by the stable manifolds of a Morse function on $C$ which has its unique minimum on the boundary. After observing that the Morse complex associated to such a function computes the homology of $C$, the result follows from a verbatim repetition of the argument in Section \ref{sec:algtop}.

\bibliographystyle{alpha}

\end{document}